\newcommand{\eps}{\varepsilon}
\newcommand{\<}{\subseteq}
\newcommand{\De}{\Delta}
\newcommand{\de}{\delta}
\newcommand{\be}{\beta}
\newcommand{\ga}{\gamma}
\newcommand{\la}{\lambda}
\def \F{\mathcal{F}}
\def \N{\mathbb{N}}
\newcounter{propcounter}
\theoremstyle{plain}
\newtheorem{thm}{Theorem}[section]
\newtheorem{theorem}[thm]{Theorem}
\newtheorem{conjecture}[thm]{Conjecture}
\newtheorem{lemma}[thm]{Lemma}
\newtheorem{corollary}[thm]{Corollary}
\newtheorem{proposition}[thm]{Proposition}
\theoremstyle{definition}
\newtheorem{question}[thm]{Question}
\newtheorem{remark}[thm]{Remark}
\newtheorem{definition}[thm]{Definition}
\newtheorem{claim}[thm]{Claim}
\newtheorem{fact}[thm]{Fact}
\newtheorem{example}[thm]{Example}
\newtheorem{defn-thm}[thm]{Definition-Theorem}
\numberwithin{equation}{section}
\newcommand{\btheorem}{\begin{theorem}}
\newcommand{\etheorem}{\end{theorem}}
\newcommand{\bconjecture}{\begin{conjecture}}
\newcommand{\econjecture}{\end{conjecture}}
\newcommand{\bproposition}{\begin{proposition}}
\newcommand{\eproposition}{\end{proposition}}
\newcommand{\bdefinition}{\begin{definition}}
\newcommand{\edefinition}{\end{definition}}
\newcommand{\bcorollary}{\begin{corollary}}
\newcommand{\ecorollary}{\end{corollary}}
\newcommand{\pr}{\begin{proof}}
\newcommand{\oof}{\end{proof}}
\newcommand{\bclaim}{\begin{claim}}
\newcommand{\eclaim}{\end{claim}}
\newcommand{\bquestion}{\begin{question}}
\newcommand{\equestion}{\end{question}}
\newcommand{\bfact}{\begin{fact}}
\newcommand{\efact}{\end{fact}}
\newcommand{\bremark}{\begin{remark}}
\newcommand{\eremark}{\end{remark}}
\newcommand{\eexample}{\end{example}}
\newcommand{\bexample}{\begin{example}}
\newcommand{\ma}{\end{lemma}}
\newcommand{\lem}{\begin{lemma}}
\begin{document}


\title{Spanning trees in sparse expanders}

\author{Jie Han}
\address{School of Mathematics and Statistics, Beijing Institute of Technology, Beijing, China. Email: {\tt han.jie@bit.edu.cn.}}

\author{Donglei Yang}
\address{Data Science Institute, Shandong University, Shandong, China. Email: {\tt dlyang@sdu.edu.cn}. }

\begin{abstract}
Given integers $n\ge \Delta\ge 2$, let $\mathcal{T}(n, \Delta)$ be the collection of all $n$-vertex trees with maximum degree at most $\Delta$.
A question of Alon, Krivelevich and Sudakov in 2007 asks for determining the best possible spectral gap condition forcing an $(n, d,\lambda)$-graph to be $\mathcal{T}(n, \Delta)$-universal, namely, it contains all members of $\mathcal{T}(n, \Delta)$ as a subgraph simultaneously.
In this paper we show that for sufficiently large integer $n$ and all $\Delta\in \mathbb{N}$, every
$(n, d,\lambda)$-graph with
\[
\lambda\le\frac{d}{2\Delta^{5\sqrt{\log n}}}
\]
is $\mathcal{T}(n, \Delta)$-universal.
As an immediate corollary, this implies that Alon's ingenious construction of triangle-free sparse expander is $\mathcal{T}(n, \Delta)$-universal, which provides an explicit construction of such graphs and thus solves a question of Johannsen, Krivelevich and Samotij.
Our main result is formulated under a much more general context, namely, the $(n,d)$-expanders.
More precisely, we show that there exist absolute constants $C,c>0$ such that the
following statement holds for sufficiently large integer $n$.
\begin{enumerate}
  \item For all $\Delta\in \mathbb{N}$, every
$(n, \Delta^{5\sqrt{\log n}})$-expander is $\mathcal{T}(n, \Delta)$-universal.
  \item For all $\Delta\in \mathbb{N}$ with $\Delta \le c\sqrt{n}$, every
$(n, C\Delta n^{1/2})$-expander is $\mathcal{T}(n, \Delta)$-universal.
\end{enumerate}
Both results significantly improve a result of Johannsen, Krivelevich and Samotij, and have further implications in locally sparse expanders and Maker-Breaker games that also improve previously known results drastically.
\end{abstract}



\maketitle

\section{Introduction}
A graph $G$ is called \emph{universal} for a family of graphs $\F$ for every $F\in \F$, $F$ is a subgraph of $G$.
There has been a rich body of research on explicit or randomized constructions of universal graphs~\cite{A2,A3,A4,B6,B12,C16,C17,C19,C20}.
In this paper, we focus on the case when $\F$ is a family of \emph{spanning trees} with bounded maximum degree.
The problem of existence of large trees in graphs and random
graphs has a long and profound history. 

For integers $n, \Delta\in \mathbb N$, we define $\mathcal{T}(n, \Delta)$ as the family of all $n$-vertex trees with maximum degree at most $\Delta$.
The binomial random graphs $G(n,p)$ are $n$-vertex graphs where every pair of vertices is connected with probability $p$, independent of other pairs.
Regarding the containment of a single bounded degree tree in random graphs, Kahn~\cite{kahn16} conjectured that for any $\Delta>0$, there exists a constant $C:=C(\Delta)$ such that $C(\Delta)\log(n)/n$ is the threshold\footnote{Given a non-trivial monotone graph property $\mathcal P$, a function $q(n)$ is called a threshold function for $\mathcal P$ if when $p(n)/q(n)\to 0$ the probability that $G(n,p)$ satisfies $\mathcal P$ tends to 0, and when  $p(n)/q(n)\to \infty$ the probability that $G(n,p)$ satisfies $\mathcal P$ tends to 1.}
for a spanning tree with maximum degree $\Delta$ in $G(n,p)$.
In particular, this would imply that the threshold for constant maximum degree spanning trees is $\log(n)/n$.
The conjecture of Kahn is resolved by a breakthrough of Montgomery~\cite{Montgomery19}, who actually showed that at the conjectured threshold one actually gets the universality for the family $\mathcal{T}(n, \Delta)$.
%


%

\begin{theorem}\cite{Montgomery19}
\label{thm:RM19}
There exists $C=C(\Delta)$ such that $G(n, \frac{C\log n}{n})$ is a.a.s $\mathcal{T}(n, \Delta)$-universal.
\end{theorem}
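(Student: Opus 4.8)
The plan is to derive the statement from three ingredients: the pseudorandom structure of $G(n,p)$ at $p=C\log n/n$, a structural dichotomy for bounded-degree trees, and a two-phase embedding scheme --- a ``bulk'' phase and an ``endgame'' phase --- organised around a reserved flexible reservoir. \emph{Pseudorandomness.} Fix $p=C\log n/n$ with $C=C(\Delta)$ large. Chernoff and union bounds show that a.a.s.\ $G=G(n,p)$ satisfies: (i) every degree lies in $[\tfrac12 C\log n,\, 2C\log n]$; (ii) $|N_G(S)\setminus S|\ge 10\Delta^2|S|$ whenever $|S|\le n/(C\log n)$, with a much larger blow-up for much smaller $S$; (iii) $e_G(A,B)\ge\tfrac12|A||B|p>0$, hence $|N_G(A)|\ge(1-o(1))n$, for all disjoint $A,B$ with $|A|,|B|\ge n/(\log n)^2$; and (iv) a Hall-robustness property: for every $W\subseteq V(G)$ with $|W|\ge n/(16\Delta^2)$, the bipartite graph joining each $v\in V(G)$ to $N_G(v)\cap W$ satisfies Hall's condition on the $W$-side against every subset of size at most $|W|$. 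Moreover a uniformly random $W\subseteq V(G)$ inherits (ii)--(iv) up to constants a.a.s., which we shall use.

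\emph{Tree dichotomy.} By a lemma of Krivelevich, every $n$-vertex tree $T$ with $\Delta(T)\le\Delta$ contains either (L) at least $n/(4\Delta)$ leaves, or (P) at least $n/(4k_0)$ pairwise vertex-disjoint bare paths of a fixed length $k_0$ (a path being \emph{bare} if all of its internal vertices have degree $2$ in $T$). In case (L), choose a set $L$ of exactly $\beta n$ leaves, with $\beta:=1/(16\Delta)$, so that $T':=T-L$ is a subtree on $n-|L|$ vertices. In case (P), choose $m$ disjoint bare paths with $m(k_0-1)=\lfloor\beta n\rfloor$, delete their interiors to obtain a forest $T'$, and record the endpoint pairs $(a_i,b_i)$ that must later be reconnected by internally disjoint paths of length $k_0$.

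\emph{Embedding.} Choose a uniformly random reservoir $W\subseteq V(G)$ with $|W|=n-|V(T')|$; a.a.s.\ it satisfies the inherited properties above. Embed $T'$ into $V(G)\setminus W$ in breadth-first order, mapping each new vertex to a still-unused neighbour of its parent's image. Since $|V(T')|=|V(G)\setminus W|$, this is a \emph{near-perfect} embedding of $V(G)\setminus W$, so an unguided greedy process stalls at the very end; the remedy is to run the procedure around an \emph{absorbing structure} --- a constant fraction of $T'$ near the top of the BFS order, embedded first into a carefully prepared matching-robust patch of $G$ --- that keeps enough freedom to mop up whatever few vertices remain, in the spirit of the absorption method. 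Let $U:=V(G)\setminus\phi(V(T'))=W$. In case (L) we finish by matching each leaf $\ell\in L$ to a vertex of $U$ adjacent in $G$ to $\phi(\mathrm{parent}(\ell))$, i.e.\ by finding a perfect matching in the bipartite graph on $L\cup U$ with $\ell\sim u$ iff $u\in N_G(\phi(\mathrm{parent}(\ell)))$; Hall's condition holds because any $S\subseteq L$ has at least $|S|/\Delta$ distinct parent-images, whose $G$-neighbourhood meets the pseudorandom set $U$ in at least $|S|$ vertices by (ii)--(iv). In case (P) we route vertex-disjoint $a_i$--$b_i$ paths of length $k_0$ through $U$ so as to exhaust $U$, which follows from a connecting lemma for pseudorandom graphs: extend all partial paths one layer at a time using expansion, and close the final layer with a Hall-type matching as above. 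In either case $\phi$ now embeds all of $T$, and since $T\in\mathcal{T}(n,\Delta)$ was arbitrary, $G$ is a.a.s.\ $\mathcal{T}(n,\Delta)$-universal.

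\emph{Main obstacle.} The hard part is the embedding of $T'$: it is almost spanning in $V(G)\setminus W$, so the last handful of vertices of $G$ may simply be unreachable by any naive procedure. Handling this is the genuinely new point, and it needs both a delicate tree-partitioning lemma that isolates a linear ``absorber'' piece of $T$ with built-in flexibility at its attachment points, and a correspondingly matching-robust region of the host graph in which to embed that piece. By contrast, the pseudorandom estimates, the dichotomy, and the Hall/connecting steps are routine.
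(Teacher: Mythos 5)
The paper cites this result from Montgomery and gives no proof of it, but its Section~2 does summarise the argument and explicitly singles out \emph{absorption via bipartite templates} for path covers as the crucial new ingredient. Your proposal correctly identifies the Krivelevich dichotomy (many leaves vs.\ many disjoint bare paths of fixed length), the reservation of a random set $W$, and the Hall matching endgame in the many-leaves case; but there are two genuine gaps, and in both you have misplaced where the difficulty actually sits. First, you set $|W| = n - |V(T')|$ and then propose to embed $T'$ \emph{spanning} $G[V(G)\setminus W]$. But $T'\in\mathcal{T}((1-\beta)n,\Delta)$ and $G[V(G)\setminus W]$ is still a sparse random graph on $(1-\beta)n$ vertices at essentially the same edge density, so a spanning embedding of $T'$ is exactly the problem you set out to prove, at a marginally smaller scale. ``A constant fraction of $T'$ near the top of the BFS order, embedded into a matching-robust patch'' is not a mechanism that makes a greedy BFS exhaustive; it is a restatement of the target. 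The correct shape, used both by Alon--Krivelevich--Sudakov and by Montgomery, embeds $T'$ \emph{almost}-spanning into a block that is deliberately \emph{larger} than $|V(T')|$, leaving a genuine slack of size roughly $\Theta(\Delta n/\log n)$, via an almost-spanning tree lemma (a matching argument, not absorption); the slack together with the reservoir is what the endgame exhausts.

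Second, and more fundamentally, you dismiss the case-(P) endgame as a ``routine connecting step.'' This is exactly backwards: covering the leftover set $U$ completely by vertex-disjoint $a_i$--$b_i$ paths of the prescribed fixed length at $p=C\log n/n$ is the bottleneck of Montgomery's proof, and is where the bipartite-template absorption enters (this is the analogue of Lemma~\ref{flexiblematching} together with the path-cover statements Theorems~\ref{pathcover} and \ref{pathcover2} in the present paper). A greedy ``extend one layer at a time by expansion, close the last layer with Hall'' scheme can connect the prescribed pairs, but it offers no control over which vertices of $U$ it consumes, and hence cannot be made to exhaust $U$: after connecting, a positive fraction of $U$ is left over, with no way to re-route the already-built paths through it. The flexibility supplied by the pre-built template is precisely what allows one to absorb that leftover. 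Without either citing or reconstructing this machinery, the case-(P) argument has no proof, and your ``main obstacle'' paragraph attributes the innovation of the theorem to the wrong step.
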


Montgomery's result for single tree containment is further obtained recently by Frankston, Kahn, Narayanan and Park~\cite{FRANK21} and by Park and Pham~\cite{PP} in their resolutions of the fractional expectation-threshold conjecture of Talagrand~\cite{TALA10}, and the expectation-threshold conjecture itself due to Kahn and Kalai~\cite{KK07}.

These recent celebrated developments extend drastically our knowledge on \emph{thresholds}, the optimal probability for the emergence of subgraphs in random graphs.
It also raises naturally the question of constructions of sparse universal graphs for spanning trees and other natural graph classes.

\subsection{Universality in sparse graphs}
Another natural graph class is {$\mathcal H(n, \Delta)$}, the class of all $n$-vertex graphs with maximum degree at most $\Delta$.
For dense random graphs $G(n,p)$ where $p$ is a constant, the celebrated blow-up lemma of Koml\'os, S\'ark\"ozy and Szemer\'edi~\cite{KSS01} shows that $G(n,p)$ is universal for $\mathcal H(n, \Delta)$ when $\Delta$ is a constant.
Unlike the dense case, the study of universality in sparse random graphs is proven to be a challenging task.
Following some initial progress by Krivelevich~\cite{1997} and Kim~\cite{2003}, Johansson, Kahn and Vu~\cite{JKV} showed (among other things) that the threshold for the existence of a $K_{\Delta+1}$-factor is $n^{-\frac2{\Delta+1}}(\log n)^{1/\binom {\Delta+1}2}$.
For universality, Dellamonica, Kohayakawa, R\"{o}dl and Ruci\'{n}ski~\cite{D12} (for $\Delta\ge 3$) and Kim and Lee~\cite{KL14} (for $\Delta=2$) showed that $G(n, p)$ are $\mathcal H(n, \Delta)$-universal for $p=\tilde\Omega(n^{-1/\Delta})$.
The case $\Delta\ge 3$ has been subsequently improved by Ferber and Nenadov~\cite{FN18}, and the case $\Delta=2$, namely, for the 2-universality, the optimal threshold $p=C\left( \frac{\log n}{n^2}\right)^{1/3}$ was established in an excellent work of Ferber, Kronenberg and Luh~\cite{FER19}.

Now let us switch to another fruitful area, the study of sparse pseudorandom graphs.
One prominent class of such graphs are expander graphs.
Given a graph $G$ on $n$ vertices, let $\la_1\ge\la_2\ge \cdots\ge\la_n$ be the eigenvalues of its adjacency
matrix. Then $\la(G)=\max_{2\le i\le n} |\la_i|$ is called \emph{the second eigenvalue} of $G$.
An $(n,d,\la)$-\emph{graph} $G$ is a $d$-regular graph on $n$
vertices and the second eigenvalue of $G$ is at most $\la$ (its first eigenvalue is equal to $d$).
The well-known Expander Mixing Lemma gives a good estimate on the edge distribution of an $(n,d,\la)$-graph, which suggests that if $\la$ is much smaller than the degree $d$, then $G$ has strong expansion properties. Therefore, many typical embedding problems asks for the best possible condition on \emph{spectral gap} $\frac{\lambda}{d}$ to ensure certain structures.
For instance, a celebrated conjecture of Krivelevich and Sudakov~\cite{KS03} states that there exists $c>0$ such that for large enough $n$, every $(n, d,\lambda)$-graph with $\frac{\lambda}{d}<c$ is Hamiltonian.
Similar to sparse random graphs, embedding (and universality) problems appear to be very hard in $(n,d,\la)$-graphs.
The sparse blow-up lemma developed in~\cite{ALLEN19} shows that $(n,d,\la)$-graphs with $\lambda = o((d/n)^{\max\{4, (3\Delta+1)/2\}}n)$ are $\mathcal H(n, \Delta)$-universal.

In this paper, we  shall focus on the following problem on tree-universality proposed by Alon, Krivelevich and Sudakov~\cite{Alon07} in 2007.
\begin{question}\cite{Alon07}
Is it true that for every $\Delta\in \mathbb{N}$, there exists $c=c(\Delta)>0$ such that every
$(n, d,\lambda)$-graph with $\frac{\lambda}{d}<c$ is $\mathcal{T}(n, \Delta)$-universal?
\end{question}

Alon, Krivelevich and Sudakov~\cite{Alon07} obtained the following result on embedding almost spanning trees (subsequent improvement on the spectral gap was obtained by Balogh,
Csaba, Pei and Samotij~\cite{BCPS10}).
\begin{theorem}\cite{Alon07,BCPS10}
For all $\Delta\in \mathbb{N}$ and constant $0<\eps<1/2$, every
$(n, d,\lambda)$-graph with \[\frac{\lambda}{d}\le\frac{\eps}{\sqrt{8\De}}\] is $\mathcal{T}(n-\eps n, \Delta)$-universal.
\end{theorem}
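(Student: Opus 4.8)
The plan is to turn the spectral gap into robust combinatorial expansion via the Expander Mixing Lemma, and then to embed $T$ vertex by vertex in a carefully chosen order, maintaining for the part of $T$ not yet embedded small ``reservoirs'' of fresh candidate images, in the spirit of the Friedman--Pippenger method, using the $\eps n$ vertices of slack to keep room at every step.

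\textbf{From the spectral gap to expansion.} The Expander Mixing Lemma gives $\bigl|e(A,B)-\tfrac dn|A||B|\bigr|\le\lambda\sqrt{|A||B|}$ for all $A,B\subseteq V(G)$, and I would extract two consequences. First, small sets expand: there is an absolute constant $c>0$ such that every $S$ with $|S|\le cd/\Delta$ satisfies $|N_G(S)|\ge d-\lambda-\tfrac dn|S|\ge(\Delta+1)|S|+1$. Second --- and this is where the precise hypothesis enters --- a robustness statement: for every ``reservoir'' set $W$ with $|W|\ge\eps n$, the set $B_W$ of vertices $u$ with $|N_G(u)\cap W|<\tfrac{\eps d}{2}$ has $|B_W|\le\tfrac{4}{\eps^2}(\lambda/d)^2|W|$, and the bound $\lambda/d\le\eps/\sqrt{8\Delta}$ is exactly what turns this into $|B_W|\le|W|/(2\Delta)$. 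Hence throughout the embedding, when at least $\eps n$ vertices remain outside the image, all but a $\tfrac1{2\Delta}$-fraction of those unused vertices still have $\ge\tfrac{\eps d}{2}\ge\Delta$ unused neighbours.

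\textbf{Processing order and the embedding.} I would first fix an ordering $v_1,\dots,v_{(1-\eps)n}$ of $V(T)$ in which each vertex follows its parent and at every moment only $O(\Delta\log n)$ vertices are \emph{active} (already embedded but with a child still unembedded); such an order exists because bounded-degree trees have logarithmic pathwidth, and alternatively one can split $T$ into a small ``core'' plus a forest of small subtrees hanging off it (a $t$-vertex tree decomposes into pieces of size $\le\ell$ after deleting $O(t/\ell)$ vertices) and handle the blocks in turn. Then embed $v_1,v_2,\dots$ in this order: when $v_i$, whose parent $v_j$ is placed at $\phi(v_j)$, is to be embedded, take $\phi(v_i)$ from the reservoir currently attached to $v_j$ and assign to $v_i$ a new reservoir of $\Delta$ unused, unreserved vertices inside $N_G(\phi(v_i))$. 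This succeeds because the images together with all active reservoirs always number at most $(1-\eps)n+O(\Delta^2\log n)<n$ (so there is room), and because their union, having size $O(\Delta^2\log n)\le cd/\Delta$, expands $(\Delta+1)$-fold, so the Hall condition for simultaneously matching a vertex's $\le\Delta$ children to fresh neighbours holds; the robustness fact is what prevents an active reservoir from being swallowed by the used set. The factor $\Delta$ in $\eps/\sqrt{8\Delta}$ reflects the loss when passing from a set of parents to its $\Delta$-times-larger set of children in the Hall/expansion condition, the square root reflects the error term $\lambda\sqrt{|A||B|}$ in the mixing lemma, and the constant $8$ together with the factor $\eps$ come from balancing the resulting expansion deficit against the $\eps n$ vertices of slack.

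\textbf{Main obstacle.} The delicate point is keeping the embedding alive: one must guarantee that every active reservoir continues to meet enough fresh vertices even though up to $(1-\eps)n$ vertices are already in use, i.e.\ that the images $\phi(v_j)$ one commits to are never over-concentrated, neighbourhood-wise, in the used set. This is automatic when $d$ is large, but the hypothesis only forces $d=\Omega(\Delta/\eps^2)$ (an Alon--Boppana-type bound), and in that borderline regime --- where $G$ is essentially Ramanujan --- one must exploit the near-optimal expansion of both small sets and near-linearly-sized sets very tightly, and choose each image from its parent's reservoir so that the invariants persist; making this balance work with precisely the constant $\eps/\sqrt{8\Delta}$ is the heart of the argument. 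A secondary technical ingredient is the ordering (or decomposition) lemma for bounded-degree trees needed to control the number of simultaneously active reservoirs.
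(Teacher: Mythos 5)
Your spectral-to-expansion translation is on the right track and the algebra making $\lambda/d\le\eps/\sqrt{8\Delta}$ produce $|B_W|\le|W|/(2\Delta)$ is correct, but the embedding step does not close, and you acknowledge this yourself in the ``Main obstacle'' paragraph: the per-vertex reservoir scheme with a low-active-set ordering has no mechanism to recover when a committed image's neighbourhood has been consumed, and you cannot simply arrange in advance to avoid it because the ``bad'' set $B_W$ changes as $W$ shrinks. That recovery mechanism is precisely what the Friedman--Pippenger/Haxell machinery provides, and without it the greedy reservoir procedure can stall. There is also a quantitative slip: you assert $(\Delta+1)$-fold expansion only for $|S|\le cd/\Delta$, but the argument needs it for all $|X|\le m:=\frac{\eps n}{2(\Delta+1)}$, typically a vastly larger range when $d\ll n$; the stated bound $|N_G(S)|\ge d-\lambda-\frac{d}{n}|S|$ is not the right consequence of the Expander Mixing Lemma (it is a per-vertex degree bound, not a bound on $|N_G(S)|$ for sets), and in particular it is vacuous once $|S|$ exceeds roughly $d/\Delta$.

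The clean route --- and the one behind the cited Alon--Krivelevich--Sudakov and Balogh--Csaba--Pei--Samotij proofs, which this paper takes as a black box and records in the appendix as Theorem~\ref{almost2}, a corollary of \cite{Haxell01} formulated in \cite{BCPS10} --- is to not re-derive an ordered embedding at all, but to verify the two hypotheses of that theorem with $M=(1-\eps)n$ and $m=\frac{\eps n}{2(\Delta+1)}$. For (i), apply the Expander Mixing Lemma to $X$ and $X\cup N(X)$: the identity $e(X,X\cup N(X))=d|X|$ gives $d|X|\le\frac{d}{n}|X|\bigl(|X|+|N(X)|\bigr)+\lambda\sqrt{|X|(|X|+|N(X)|)}$, and if $|X|+|N(X)|\le(\Delta+1)|X|$ with $|X|\le m$ one gets $d\le\frac{\eps d}{2}+\lambda\sqrt{\Delta+1}\le\frac{\eps d}{2}+\frac{\eps d}{2}<d$ (using $\sqrt{\Delta+1}\le\sqrt{2\Delta}$), a contradiction, so $|N(X)|\ge\Delta|X|+1$ for all such $X$. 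For (ii), the non-edge version of EML gives $|V\setminus(X\cup N(X))|\le(\lambda n/d)^2/|X|$, and at $|X|=m$ with $\lambda/d\le\eps/\sqrt{8\Delta}$ this is at most $\frac{(\Delta+1)\eps n}{4\Delta}\le\frac{\eps n}{2}=n-(\Delta+1)m-M$, i.e.\ $|N(X)|\ge\Delta m+M$. One invocation of Theorem~\ref{almost2} then replaces your entire ordered-reservoir scheme and dissolves the obstacle you flag.
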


Our first main result is as follows.

\begin{theorem}\label{th1}
For sufficiently large integer $n$ and all $\Delta\in \mathbb{N}$, every
$(n, d,\lambda)$-graph with
\[
\la\le \frac{d}{2\De^{5\sqrt{\log n}}}
\]
is $\mathcal{T}(n, \Delta)$-universal.
\end{theorem}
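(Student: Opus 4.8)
The plan is to derive Theorem~\ref{th1} from the general statement that every $(n,\Delta^{5\sqrt{\log n}})$-expander is $\mathcal{T}(n,\Delta)$-universal; it then suffices to show that an $(n,d,\lambda)$-graph $G$ with $\lambda\le d/(2D)$, where $D:=\Delta^{5\sqrt{\log n}}$, is an $(n,D)$-expander. In the relevant definition this amounts to two properties: (i) every $S\subseteq V(G)$ with $|S|\le n/(2D)$ satisfies $|N_G(S)|\ge D|S|$; and (ii) there is an edge between any two disjoint vertex sets that are not too small. Property (ii) is immediate from the Expander Mixing Lemma, as $e_G(S,T)\ge \frac{d}{n}|S||T|-\lambda\sqrt{|S||T|}$ is positive whenever $\sqrt{|S||T|}>\lambda n/d=n/(2D)$. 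For (i) the Mixing Lemma alone is too weak once $D=n^{o(1)}$, so I would instead invoke the sharper spectral estimate $|N_G[S]|\ge \frac{|S|\,n}{|S|+(n-|S|)(\lambda/d)^2}$ (of Tanner/Alon--Chung type): since $(\lambda/d)^2\le 1/(4D^2)$, a short computation shows the right-hand side exceeds $(D+1)|S|$ whenever $|S|\le n/(2D)$, hence $|N_G(S)|\ge D|S|$. This establishes the reduction, so the rest of the plan concerns the expander universality theorem itself.

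For that theorem I would follow the tree-embedding strategy of Montgomery and of Johannsen--Krivelevich--Samotij, adapted to the regime of subpolynomial expansion. The first ingredient is the structural dichotomy for bounded-degree trees: for an appropriately chosen $k$, every $T\in\mathcal{T}(n,\Delta)$ either has $\Omega(n/k)$ leaves or contains $\Omega(n/k)$ vertex-disjoint \emph{bare paths} (paths all of whose internal vertices have degree $2$ in $T$) of length $k$; moreover these features can be detached so that what remains is a \emph{reduced} tree together with a controlled list of pendant leaves, or of paths with prescribed endpoints and lengths, to be inserted afterwards. Here $k$ has order $\sqrt{\log n}/\log\Delta$, essentially the longest path one can route between prescribed vertices using expansion $D$, since in an $(n,D)$-expander a breadth-first ball grows by a factor $\Omega(D)$ per step until it has $n/2$ vertices. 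The embedding is then carried out in two phases: first embed the reduced tree robustly, maintaining a well-connected reservoir of unused vertices; then complete it, in the leaf case by a matching argument using property (ii), and in the bare-path case by constructing a system of $\Omega(n/k)$ vertex-disjoint paths of prescribed lengths through the reservoir.

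The bare-path completion is the main obstacle, since one must route many short disjoint paths that together nearly exhaust the reservoir, while the expansion of the unused part deteriorates as vertices get used. I would handle it by a recursion of depth $\Theta(\sqrt{\log n})$ on the path length: to join a batch of pairs by paths of length $\ell$, select a well-spread, still-expanding set of midpoints, split each path there, and recurse on the two batches of length $\ell/2$; the base case joins pairs at bounded distance directly by expansion. Each level of the recursion can be arranged to cost only a bounded power of $\Delta$ in the expansion demanded of $G$ --- this is the origin of the constant $5$ in the exponent --- and balancing the $\Theta(\sqrt{\log n})$ levels against this per-level cost yields precisely $D=\Delta^{5\sqrt{\log n}}$. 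The delicate points, where essentially all the work lies, are: ordering the embedding from the most-constrained pieces to the least-constrained and keeping generous reservoirs so that expansion survives the deletions of earlier stages; showing that the midpoint sets remain expanding after those deletions; and bookkeeping the parameters so the recursion closes with the claimed constant.
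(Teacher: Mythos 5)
Your reduction of Theorem~\ref{th1} to the expander statement (Theorem~\ref{th1+}) is correct and parallels the paper's Proposition~\ref{f1}. The paper verifies property (i) of Definition~\ref{expander} via a degree-sum count ($d|X|\le e(X,X\cup N(X))$ bounded by the Mixing Lemma) rather than the Tanner/Alon--Chung bound you cite, but both deliver the needed one-sided expansion; property (ii) is handled identically. That part is fine.

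The gap is in your plan for Theorem~\ref{th1+} itself, and it is substantial. First, you treat the \emph{many-leaves} case as routine: ``complete by a matching argument using property (ii).'' This is exactly the step the paper identifies as the crux. After embedding the reduced tree $T'$ via Theorem~\ref{almost}, the set $\phi(A)$ of images of leaf-parents has no guaranteed expansion into the reservoir; property (ii) alone gives only the weak ``$m$-joined'' condition, which is not enough for the generalized Hall condition in Lemma~\ref{star} or~\ref{star1} (those require two-sided expansion, or one-sided expansion plus a large minimum degree from $B$ into $A$, neither of which you have for free). The paper's entire mechanism --- iterating leaf-removal $h=\lceil\sqrt{\log n}\rceil$ times, reserving an expanding block $V_2$ of size $|B|$, finding a maximal (not perfect) matching into $V_2$, and then \emph{pruning and regrowing} the at most $m$ failed branches through a ``tree array'' in a third block $V_3$ (Definition~\ref{def:tree}, Lemma~\ref{rec1}, Corollary~\ref{rec}) so that $B$ lands exactly on $V_2$ --- exists precisely to manufacture the two-sided expansion your proposal assumes. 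Your plan has no substitute for this, so the leaf case would not close.

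Second, your description of the bare-path case does not match what is needed, and your parameter heuristics are off. You take the bare-path length $k$ to be of order $\sqrt{\log n}/\log\Delta$ and propose a midpoint-splitting recursion of depth $\Theta(\sqrt{\log n})$, claiming this is the origin of the $\Delta^{5\sqrt{\log n}}$ threshold. In the paper, $k=\lceil\log^3 n\rceil$ (so that Montgomery's path-cover, Theorem~\ref{pathcover}, with its $\ell\ge 10^3\log^2 n$ requirement, applies), and $\sqrt{\log n}$ is the depth $h$ of iterated leaf-removal, hence the length of the reconstructed paths in the tree array; the constant $5$ arises from the budget calculation in Corollary~\ref{rec}. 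Your recursion sketch also omits the covering requirement: the routed paths must \emph{exactly exhaust} the reservoir, which is handled in the paper by the bipartite-template absorption (Lemma~\ref{flexiblematching}) embedded in the path-cover theorems, not by a balanced divide-and-conquer. Finally, even in the bare-path case the paper must split into three subcases (bare in $T$, caterpillar, pendant-star) depending on what hangs off the bare path of $T_h$, and the latter two again require the prune-and-regrow tree-array argument; your sketch has only one case. In short: the reduction is sound, but the proposal for the expander theorem is missing the paper's central new idea and would not complete as stated.
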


Note that when $\Delta = 2^{o(\sqrt{\log n})}$, it holds that $\De^{5\sqrt{\log n}} = n^{o(1)}$, namely, Theorem~\ref{th1} narrows the spectral gap which bypasses any polynomial function of $n$.
We also remark that our proof also works under a slightly more general context, namely, the $(p, \beta)$-bijumbled graphs with a minimum degree condition.
Indeed, a graph $G=(V, E)$ is $(p, \beta)$-bijumbled if for every vertex sets $X, Y\subseteq V$, we have
\[
|e_G(X, Y) - p|X||Y|| \le \beta \sqrt{|X||Y|},
\]
where $e_G(X, Y):=|\{xy\in E(G) \mid (x, y)\in X\times Y\}|$ (so edges in $X\cap Y$ are counted twice).
It is well-known that $(n, d,\lambda)$-graphs are $(d/n, \lambda)$-bijumbled.

\begin{theorem}
\label{thm:1'}
For sufficiently large $n$ and all $\Delta\in \mathbb{N}$, every $n$-vertex
$(p, \beta)$-bijumbled with
\[
{\beta} \le \frac{pn}{4\De^{5\sqrt{\log n}}}
\]
and minimum degree at least $4\sqrt{\be pn}$
is $\mathcal{T}(n, \Delta)$-universal.
\end{theorem}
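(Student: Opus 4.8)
\noindent\emph{Plan of proof.}\ The plan is to derive Theorem~\ref{thm:1'} as a short consequence of our main theorem that every $(n,\Delta^{5\sqrt{\log n}})$-expander is $\mathcal{T}(n,\Delta)$-universal. All of the genuine difficulty — embedding every bounded-degree spanning tree under the quantitative relation $d=\Delta^{5\sqrt{\log n}}$ — is contained in that statement, so here it suffices to prove that, writing $d:=\Delta^{5\sqrt{\log n}}$, any $n$-vertex $(p,\beta)$-bijumbled graph $G$ with $\beta\le pn/(4d)$ and $\delta(G)\ge 4\sqrt{\beta pn}$ is an $(n,d)$-expander. Concretely one verifies (i) a small-set vertex-expansion bound $|N_G(S)|\ge d|S|$ for every $S\subseteq V(G)$ with $|S|$ up to order $n/d$, and (ii) the existence of an edge between any two disjoint vertex sets of size of order $n/d$.

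Condition~(ii) is immediate: if disjoint $X,Y$ with $|X|,|Y|\ge n/(4d)$ spanned no edge, then bijumbledness gives $p|X||Y|\le\beta\sqrt{|X||Y|}$, hence $\sqrt{|X||Y|}\le\beta/p\le n/(4d)$, contradicting $|X||Y|\ge(n/(4d))^2$. For condition~(i) I would split on $|S|$ at the threshold $s_0:=2\beta^2/(p^2n)$. If $|S|\ge s_0$, apply bijumbledness to the pair $(S,\,V(G)\setminus N_G[S])$, which spans no edge; this gives $|V(G)\setminus N_G[S]|\le\beta^2/(p^2|S|)\le n/2$, so $|N_G(S)|\ge n/2-|S|\ge d|S|$ whenever $|S|\le n/(2(d+1))$, and in particular throughout the range $|S|\le n/(4d)$ since $d\ge1$. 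If instead $|S|<s_0$, count the edges leaving $S$: from $e_G(S,N_G[S])=\sum_{v\in S}\deg_G(v)\ge\delta(G)|S|$ and bijumbledness,
\[
\delta(G)|S|\ \le\ e_G\bigl(S,N_G[S]\bigr)\ \le\ p|S|\,|N_G[S]|+\beta\sqrt{|S|\,|N_G[S]|}.
\]
If moreover $|N_G[S]|<(d+1)|S|$, then dividing by $|S|$ and using $|S|<s_0$ and $\beta\le pn/(4d)$ one gets $\delta(G)<p(d+1)s_0+\beta\sqrt{d+1}\le\beta+2\beta\sqrt d\le 3\beta\sqrt d$, whereas $\delta(G)\ge 4\sqrt{\beta pn}\ge 4\sqrt{4d\beta^2}=8\beta\sqrt d$ — a contradiction. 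Hence $|N_G[S]|\ge(d+1)|S|$, so $|N_G(S)|\ge d|S|$ in this regime too.

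The only point needing care is the bookkeeping around $s_0$: one checks that the two cases together cover every nonempty $S$ with $|S|\le n/(4d)$, the delicate regime being that of small $|S|$, which is handled through the minimum-degree hypothesis $\delta(G)\ge 4\sqrt{\beta pn}$ — indispensable here, since a $(p,\beta)$-bijumbled graph with $\beta$ as large as $pn/(4d)$ may otherwise contain vertices of very small degree. So no real obstacle arises in this reduction: the entire difficulty of the paper sits inside the $(n,d)$-expander statement being invoked, whose proof is the heart of what follows. Finally, Theorem~\ref{th1} follows analogously — an $(n,d,\lambda)$-graph is $(d/n,\lambda)$-bijumbled and has minimum degree $d\ge16\lambda$ (as $\lambda\le d/(2\Delta^{5\sqrt{\log n}})\le d/16$ for $n$ large) — and carrying out the verification above with the sharper two-sided Expander Mixing Lemma for $(n,d,\lambda)$-graphs in place of the crude bijumbledness estimate gives the marginally stronger constant in its statement.
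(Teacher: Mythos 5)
Your reduction follows the same route as the paper: you deduce the theorem from Theorem~\ref{th1+} by proving directly that the bijumbled graph, given the minimum-degree hypothesis, is an $(n,d)$-expander (this is exactly the content of the paper's Proposition~\ref{f1}, which you reprove rather than cite). Your joinedness verification and your small-$|S|$ argument via the minimum-degree bound are both correct. There is, however, a gap in the large-$|S|$ regime: Definition~\ref{expander}(i) requires $|N_G(S)|\ge d|S|$ for \emph{all} $1\le|S|<\frac{n}{2d}$, but you only certify it for $|S|\le\frac{n}{4d}$ --- your bound $|N_G(S)|\ge\frac{n}{2}-|S|$ exceeds $d|S|$ only when $|S|\le\frac{n}{2(d+1)}$, and you stop there. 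The window $\frac{n}{2(d+1)}<|S|<\frac{n}{2d}$ is left uncovered.

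The repair is easy and uses only what you already have: for $\frac{n}{4d}\le |S|<\frac{n}{2d}$, the $\frac{n}{4d}$-joinedness you established forces $|V(G)\setminus N_G[S]|<\frac{n}{4d}$ (else $S$ and $V(G)\setminus N_G[S]$ would be two disjoint sets of size at least $\frac{n}{4d}$ with no edge between them), so $|N_G(S)|>n-\frac{n}{4d}-\frac{n}{2d}\ge\frac{n}{2}>d|S|$ for every $d\ge2$, which is the only case of interest since $\Delta=1$ is trivial. Alternatively, you could aim --- as the paper's Proposition~\ref{f1} does --- for the stronger conclusion that $G$ is an $(n,\frac{pn}{4\beta})$-expander, whose expansion condition need only be checked for the smaller range $|S|<\frac{2\beta}{p}$, where the cruder estimate $|N_G(S)|>\frac{n}{2}$ already suffices throughout. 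With either patch your argument is complete and coincides with the paper's.
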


Note that the minimum degree condition we enforce in Theorem~\ref{thm:1'} is significantly weaker than that in Theorem~\ref{th1}, which considers $d$-regular (namely, $pn$-regular) graphs.

\subsection{Universality for $(n,d)$-expanders}

Our next result is on the universality problem in a broader class of graphs, namely, the $(n,d)$-expander graphs, defined as follows.
The notion of $(n, d)$-expander is first formulated by Johannsen, Krivelevich and Samotij, which is an adaptation of the expansion properties investigated by Hefetz, Krivelevich and Szab\'{o}~\cite{HKS09} for the Hamiltonicity of highly connected graphs.

\begin{definition} \cite{JKS12}\label{expander}
Given $n \in\N$ and $d>0$, an $n$-vertex graph $G$ is an $(n, d)$-\emph{expander}
if $G$ satisfies the following two conditions:
\begin{enumerate}[label=(\roman*)]
  \item $|N_G(X)| \ge d|X|$ for all $X \subseteq V(G)$ with $1 \le |X| < \frac{n}{2d}$;
  \item $e_G(X, Y) > 0$ for all disjoint $X, Y \subseteq V(G)$ with $|X| = |Y | \ge\frac{n}{2d}$.
\end{enumerate}
\end{definition}
Montgomery~\cite{Montgomery14} proposed a slightly stronger version of expansion property as follows.
\begin{definition}\label{expands}
For a graph $G$ and a set $W\< V(G)$, we say $G$ \emph{$d$-expands} into $W$ if
\begin{enumerate}[label=(\roman*)]
\item $|N_G(X, W)|\geq d|X|$ for all $X\<V(G)$ with $1\leq |X|<\lceil \frac{|W|}{2d}\rceil$, and,
\item $e_G(X,Y)>0$ for all disjoint $X,Y\<V(G)$ with $|X|=|Y|=\lceil\frac{|W|}{2d}\rceil$.
\end{enumerate}
\end{definition}
Clearly, if $G$ $d$-expands into $W$, then by definition $G[W]$ itself is a $(|W|,d)$-expander.
More often, we call a graph $G$ $m$-\emph{joined} if there is an edge in $G$
between every two disjoint sets of at least $m$ vertices.


Johanssen, Krivelevich and Samotij~\cite{JKS12} were the first to show that expander graphs are tree-universal, and thus provide explicit constructions of sparse tree-universal graphs, e.g. by the celebrated construction of Ramanujan graphs by Lubotzky-Phillips-Sarnak~\cite{LPS88}.
Since expanders are locally sparse (e.g. with small clique number), this complements previous constructions of Bhatt, Chung, Leighton and Rosenberg~\cite{B12} that are locally dense, namely, their constructions contain a large number of cliques of size $\Omega(\Delta)$.
Now we present the results of Johanssen, Krivelevich and Samotij~\cite{JKS12} on spanning tree universality.

\begin{theorem}\cite{JKS12}\label{thm:JKS}
There exists an absolute constant $c>0$ such that the
following statement holds. For all $n,\Delta\in \mathbb{N}$ with $\Delta \le cn^{1/3}$, every
$(n, 7\Delta n^{2/3})$-expander is $\mathcal{T}(n, \Delta)$-universal.
\end{theorem}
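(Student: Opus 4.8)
The plan is to reduce the spanning embedding to three ingredients: a structural decomposition of the tree, an \emph{almost}-spanning embedding of a reduced tree into the expander, and a completion step that consumes the leftover vertices. Write $d:=7\Delta n^{2/3}$ and $m:=\lceil n/(2d)\rceil=\lceil n^{1/3}/(14\Delta)\rceil$, so that every set of fewer than $m$ vertices expands by a factor $d$ and $G$ is $m$-joined. The two engines are: (i) the expansion of small sets, which via a Friedman--Pippenger-type argument lets one embed small trees of maximum degree $\Delta$ and extend partial embeddings; and (ii) $m$-joinedness, which lets one connect prescribed vertices by short paths and find the matchings needed to attach pendant structure.

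First I would decompose $T$. Using the standard dichotomy, for a suitable integer $\ell$ every $T\in\mathcal{T}(n,\Delta)$ either has at least $n/(4\ell)$ leaves or contains at least $n/(4\ell)$ vertex-disjoint \emph{bare paths} (all of whose internal vertices have degree $2$ in $T$), each on $\ell$ vertices. In either case I would peel off a \emph{flexible part} --- a set $L$ of leaves in the first case, the interiors of a sub-collection of the bare paths in the second --- chosen so that its size equals a target $\rho=\Theta(\Delta n^{2/3})$ (comparable to $d$ but $o(n)$); this is possible since $\Omega(n)\gg\rho$ leaves, respectively bare-path vertices, are available. Let $T^-$ be $T$ with the flexible part removed, contracting each used bare path to an edge that remembers its endpoints; then $|T^-|=n-\rho$, the maximum degree of $T^-$ is at most $\Delta$, and the attachment sites of the flexible part in $T^-$ (the leaves' parents, or the contracted paths' endpoints) can be kept well separated.

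Next I would embed $T^-$ into $G$ covering all but a reservoir $R$ of exactly $\rho$ vertices; since $\rho=o(d)$, the subgraph on any superset of $V(G)\setminus R$ still expands small sets by at least $d/2$ and is still $m$-joined, so the expander properties survive the reservation. The embedding of $T^-$ I would build vertex by vertex in a BFS-type order that keeps the ``active'' set (embedded vertices with un-embedded children) of size $O(m)$; to add the next children I would use a free neighbour of an active image-vertex, or, when this is blocked, reroute the current partial embedding --- expansion makes the set of partial embeddings reachable by local rerouting large, and $m$-joinedness then forces one of them to meet the unused set, as in the rotation-extension method for Hamilton paths applied to subtrees --- all the while reserving, near the images of the designated attachment sites, the hooks needed below. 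Finally I would absorb $R$: in the leaf case, attach the $\rho$ removed leaves to the images of their parents by a perfect matching $L\to R$, which exists by Hall's theorem since a subset $S\subseteq L$ of size at most $m$ sends at least $|S|$ edges into $R$ by expansion while one of size larger than $m$ does so by $m$-joinedness; in the bare-path case, re-insert the contracted bare paths as internally disjoint paths through $R$ joining the prescribed image pairs, via a connecting lemma (any family of at most $|R|/(Cm)$ disjoint terminal pairs can be joined by short internally disjoint paths in an $m$-joined graph using a prescribed set of interior vertices), where the choice of $\ell$ makes those interiors number exactly $\rho=|R|$, so that $R$ is used to the last vertex.

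The crux is making the embedding \emph{exactly} spanning rather than merely almost-spanning: this forces the reservoir size $\rho$, the number of contracted bare paths, and the routing capacity $|R|/(Cm)$ all to match, which is precisely why $d=7\Delta n^{2/3}$ and $\Delta\le cn^{1/3}$ appear --- they make $\rho=\Theta(\Delta n^{2/3})$ simultaneously $o(n)$, at least $\Theta(m)$ per contracted path, and within the $m$-joined routing budget. The second difficulty, and the technical heart of the embedding of $T^-$, is the rerouting step: unlike reversing a segment of a path, locally modifying a partial \emph{tree} embedding while preserving all adjacencies and the degree bound is delicate, and controlling it carefully is what the proof must do.
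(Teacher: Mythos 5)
The paper does not re-prove Theorem~\ref{thm:JKS} (it is cited from \cite{JKS12}), but it does outline the JKS argument in Section~2, and your proposal diverges from it in ways that expose two genuine gaps.

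First, and most importantly, your Hall-matching step in the many-leaves case does not work. You claim the perfect matching from the removed leaves into $R$ exists because small sets expand and large sets are handled by $m$-joinedness. But the expansion you have after embedding $T^-$ is \emph{one-sided}: you know $G$ expands \emph{into} $R$ (i.e., every small set of $V(G)$ has many neighbours inside $R$), but the Hall condition you need is that small subsets of $\phi(\text{parents})$ --- which live in the already-used part $V(G)\setminus R$ --- have many neighbours in $R$. Nothing in the expander hypothesis guarantees this: the embedding $\phi$ may place the parents' images on vertices with very few $R$-neighbours. The paper explicitly names this as the obstruction (``this `one-sided' expansion is too weak for us to establish a star-matching result''), and it is precisely why both JKS and the present authors do further work. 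JKS handle it by splitting the many-leaves case according to $T'$ (the tree minus its leaves): if $T'$ still has many leaves they arrange a two-step pruning, and if $T'$ has a long bare path they use Hamiltonian-connectedness. Your proposal makes no provision for this, and the matching claim as written is false.

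Second, your bare-path completion silently asserts an \emph{exact} path cover. A generic connecting lemma in an $m$-joined graph gives disjoint $(x_i,y_i)$-paths, but it does not force those paths to use \emph{every} vertex of $R$; making the total interior length equal $|R|$ does not by itself mean the leftover is zero, because the routing is greedy and can strand vertices. Turning ``connect'' into ``connect and cover'' is a substantial additional step --- it is the content of Theorem~\ref{pathcover}/\ref{pathcover2} in this paper, proved via Montgomery's template absorption (Lemma~\ref{flexiblematching}). JKS avoid it altogether by using a different dichotomy (many leaves vs.\ \emph{one} long bare path rather than many bare paths), embedding $T$ minus the single long path, and then closing that one path via a Hamiltonian-connectedness argument on the leftover. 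So your bare-path case invokes machinery you have not supplied, and it is also not the route that $\cite{JKS12}$ takes. The rotation--extension embedding of $T^-$ is also only gestured at, but since Theorem~\ref{almost} does provide the almost-spanning embedding off the shelf, that is a citable step rather than a gap.
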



They also proved the following result on the containment of almost spanning trees.

\begin{theorem}\cite{JKS12}\label{almost}
Let $n, \Delta\in \mathbb{N}$ and $d \ge 2\Delta$. Then every $(n, d)$-expander is $\mathcal{T}\left(n - 4\Delta \lceil\frac{n}{2d}\rceil, \Delta\right)$-universal.
\end{theorem}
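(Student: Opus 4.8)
Set $m:=\lceil n/(2d)\rceil$ and $t:=n-4\Delta m$, fix a tree $T\in\mathcal T(t,\Delta)$, and root it at a leaf. Note that applying (i) of Definition~\ref{expander} to singletons gives minimum degree at least $d\ge 2\Delta$, while (ii) says that $G$ is $m$-joined. The plan is to build an embedding $\phi\colon V(T)\hookrightarrow V(G)$ \emph{online}, processing the vertices of $T$ one at a time in breadth-first order (so that each vertex other than the root has a unique already-processed neighbour, its parent) and maintaining a valid partial embedding of the processed initial segment --- a subtree of $T$ containing the root --- together with enough flexibility to continue. This follows the template of the Friedman--Pippenger embedding method, sharpened in the manner of Haxell so as to leave only $O(\Delta m)$ vertices of $G$ uncovered; the two expander properties play complementary roles, (i) driving the argument while the relevant vertex sets have fewer than $m$ vertices, and (ii) taking over once they reach size $m$.

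The crux is the extension step. Suppose $\phi$ has been defined on a subtree and $v$ is the next vertex, with parent $w$ placed at $x=\phi(w)$; we must place $v$ on a vertex of $N_G(x)\setminus\mathrm{Im}(\phi)$, but when $G$ is almost full $N_G(x)$ may be entirely contained in $\mathrm{Im}(\phi)$. So we allow \emph{re-routing}: instead of fixing the current image of $w$, consider the set $R_w$ of all vertices to which $w$ can be moved by a bounded sequence of local modifications of $\phi$, each modification relocating the image of a tree-vertex that currently has no embedded children (in the many-leaves situation these are leaves; for a path-like tree one instead slides along an already-embedded bare path). We succeed as soon as some $y\in R_w$ has a neighbour outside $\mathrm{Im}(\phi)$, since then the modified embedding with $w$ at $y$ can be extended by putting $v$ on that neighbour. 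One argues that $R_w$ grows: while $|R_w|<m$, property (i) gives $|N_G(R_w)|\ge d|R_w|\ge 2\Delta|R_w|$, which (this is where $d\ge 2\Delta$ rather than merely $d\ge\Delta+1$ is used) leaves room for a further admissible move enlarging $R_w$; and once $|R_w|\ge m$, property (ii) shows that $N_G(R_w)$ omits fewer than $m$ vertices of $G$, hence meets the free set, whose size is at least $n-t=4\Delta m\ge m$. For the accounting, at every moment the occupied vertices are $\mathrm{Im}(\phi)$ (at most $t$ of them) together with a ``working set'' consisting of neighbours held in reserve for future children of active vertices and of vertices temporarily displaced during re-routing; this working set is empty while the frontier is large and has size $O(\Delta m)$ while the frontier is small, and a careful choice of how many neighbours to reserve per active vertex pins it down to at most $4\Delta m$. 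Thus the embedding never needs more than $t+4\Delta m=n$ vertices of $G$, so it runs to completion and produces a copy of $T$; as $T\in\mathcal T(n-4\Delta m,\Delta)$ was arbitrary, $G$ is $\mathcal T(n-4\Delta m,\Delta)$-universal.

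I expect the main obstacle to be precisely this bookkeeping together with the regime switch, which is also what determines the exact constant $4\Delta$ in $4\Delta\lceil n/(2d)\rceil$. One must say exactly how large the private reserve attached to each active tree-vertex should be, show that these reserves (of total size $\le 4\Delta m$) can always be maintained and replenished simultaneously using only the $2\Delta$-fold expansion of (i) while the frontier stays below $m$, and verify that as soon as the frontier reaches size $m$ its image --- a set of at least $m$ vertices, so with $N_G$ omitting fewer than $m$ vertices --- lets one abandon individual reserves and place an entire large generation, or a long stretch of a bare path, at once via a matching whose Hall condition is verified using (i) for small parent-sets and (ii) for large ones. Degenerate ranges are harmless: if $t\le 0$ there is nothing to prove, and if $m$ is a bounded constant then $G$ is so dense that the conclusion is immediate, so one may assume $d=\omega(\Delta)$ and $d=o(n)$. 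A route that avoids some of the re-routing is to split on the standard dichotomy that $T$ has either at least $t/(4\Delta)$ leaves or at least $t/(4\Delta)$ pairwise vertex-disjoint bare paths, handling the path-rich case by contracting the bare paths to get a tree small enough for a direct Friedman--Pippenger embedding and then re-expanding each contracted edge into a path of the prescribed length by rotation--extension in the $m$-joined graph $G$; but the many-leaves case still seems to require an online argument of the above type, so I would expect the unified argument to be cleaner.
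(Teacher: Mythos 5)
The paper does not prove this statement: it is imported from \cite{JKS12} as a black box, and the ``moreover'' variant actually invoked in the appendix (Theorem~\ref{almost2}, a Haxell-type tree embedding theorem stated here following \cite{BCPS10,han}) is imported as well. So there is no in-paper proof to compare against. Judged on its own terms, your proposal has the right philosophy but contains a genuine gap, and it also overlooks a much shorter route that the paper itself hands you.

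The short route: Theorem~\ref{almost} is a two-line corollary of Theorem~\ref{almost2}. Take $H=G$, $m=\lceil n/(2d)\rceil$, and $M=n-4\Delta m$. For $0<|X|\le m-1$ one has $|X|<n/(2d)$, so Definition~\ref{expander}(i) gives $|N_G(X)|\ge d|X|\ge 2\Delta|X|\ge\Delta|X|+1$. For $|X|=m\ge n/(2d)$, Definition~\ref{expander}(ii) forces $|V(G)\setminus(X\cup N_G(X))|\le m-1$, hence $|N_G(X)|\ge n-2m+1\ge n-3\Delta m=\Delta m+M$ (using $\Delta\ge 1$), which is the stronger of the two conditions when $M\ge 1$. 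Both hypotheses of Theorem~\ref{almost2} hold, so $G$ contains every $T\in\mathcal T(M,\Delta)$; the $|X|=m$ check is exactly what fixes the constant $4\Delta$.

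Your sketch instead tries to re-derive the Haxell theorem from scratch. The outline (online embedding, a re-routing set $R_w$ that grows, the regime switch at size $m$) is in the correct spirit of the Friedman--Pippenger/Haxell method, but the load-bearing steps are asserted rather than proved. In particular, the inference that ``$|N_G(R_w)|\ge 2\Delta|R_w|$ leaves room for a further admissible move enlarging $R_w$'' does not follow on its own: most of $N_G(R_w)$ may be occupied, and whether an admissible move exists depends on a quantitative invariant relating $|R_w|$, the reserves, and the free set that you never formulate. Likewise the claim that ``a careful choice of how many neighbours to reserve per active vertex pins [the working set] down to at most $4\Delta m$'' is precisely the bookkeeping a proof would have to supply, as you yourself acknowledge. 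Finally, the closing suggestion to fall back on the leaves/bare-paths dichotomy is misplaced here: the Haxell machinery embeds \emph{all} bounded-degree trees of the stated size uniformly, without case analysis. The dichotomy (Lemma~\ref{lem:fact}) becomes relevant only when one upgrades almost-spanning embeddings to genuinely spanning ones, which is the subject of the rest of the paper, not of Theorem~\ref{almost}.
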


Our second main result improves Theorem~\ref{thm:JKS}.

\begin{theorem}\label{th2}
There exist absolute constants $C,c>0$ such that the
following statement holds for sufficiently large integer $n$. For all $\Delta\in \mathbb{N}$ with $\Delta \le c\sqrt{n}$, every
$(n, C\Delta \sqrt{n})$-expander is $\mathcal{T}(n, \Delta)$-universal.
\end{theorem}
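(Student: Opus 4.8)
The plan is to bootstrap from the almost-spanning result (Theorem~\ref{almost}) to the genuinely spanning case, paying a factor of roughly $\sqrt{n}$ in the expansion parameter to cover the last $O(\Delta\lceil n/2d\rceil)$ vertices. Concretely, I would set $d = C\Delta\sqrt{n}$ so that $\lceil n/2d\rceil = O(\sqrt{n}/\Delta)$ and the ``leftover'' from Theorem~\ref{almost} has size $O(\sqrt{n})$. Given an arbitrary target tree $T \in \mathcal{T}(n,\Delta)$, the first step is a \emph{tree-surgery} step: decompose $T$ into a large ``bulk'' subtree $T'$ on $n - \Theta(\sqrt{n})$ vertices together with a small collection of pendant subtrees, arranged so that the bulk carries a useful reservoir of flexible leaf-like vertices. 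This is the kind of tree-cutting lemma that appears in Montgomery's work and in \cite{JKS12}; one wants $T'$ to have $\Omega(\sqrt n)$ pendant edges (or bare paths) available for the final absorption stage.

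The second step is to embed the bulk $T'$ into $G$ using the almost-spanning machinery. Here I would not apply Theorem~\ref{almost} as a black box to all of $G$, but rather embed $T'$ while deliberately reserving a small random-like vertex set $R \subseteq V(G)$ of size $\Theta(\sqrt n)$ to be used later; one checks that $G$ minus (the image of a slightly-shrunk) $T'$ still $d'$-expands for a somewhat smaller $d'$, using the Expander Mixing Lemma / the defining expansion inequalities to control how much expansion is lost when a linear-sized chunk is deleted. The $m$-joinedness condition (part (ii) of Definition~\ref{expander}) with $m = n/2d = \Theta(\sqrt n/\Delta)$ is exactly what guarantees that any two sets of size $\gtrsim \sqrt n$ still touch, which is the regime the final matching argument will live in.

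The third and decisive step is the \emph{absorption / matching} step that attaches the remaining $\Theta(\sqrt n)$ vertices. After embedding $T'$, the unused vertices $U$ of $G$ have size $|U| = \Theta(\sqrt n)$, and we must connect each of them to the partial embedding via the pendant slots of $T'$. I would phrase this as finding a perfect matching in an auxiliary bipartite graph between $U$ and the reserved pendant slots (or, if the pendant subtrees are larger than single edges, a system of vertex-disjoint short paths / small subtrees), and verify Hall's condition using expansion: every subset $S$ of slots has neighbourhood into $U$ of size $\ge |S|$ precisely because $|S|, |U| \gtrsim m$ forces edges by $m$-joinedness, while for small $S$ the $d$-vertex-expansion of $G$ restricted to $R$ does the job. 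The constant $C$ is chosen at the very end so that both the leftover count $4\Delta\lceil n/2d\rceil$ from Theorem~\ref{almost} and the reservoir size are comfortably below the thresholds forced by $\Delta \le c\sqrt n$.

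I expect the main obstacle to be the interface between steps two and three: one must embed the bulk $T'$ in a way that \emph{simultaneously} (a) leaves behind exactly the right set $U$ of size matching the number of reserved pendant slots, (b) keeps enough residual expansion in $G - \mathrm{im}(T')$ for Hall's condition, and (c) respects the degree bound $\Delta$ at every attachment point. The cleanest route is probably to prove a strengthened almost-spanning lemma --- an analogue of Theorem~\ref{almost} that embeds $T'$ while guaranteeing a prescribed, well-distributed set of pendant slots and a prescribed leftover --- and then the final matching is routine. Getting the quantitative bookkeeping so that the loss is only a $\sqrt n$ factor (rather than the $n^{2/3}$ of Theorem~\ref{thm:JKS}) is where the improvement, and the real work, lies.
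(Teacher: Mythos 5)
Your high-level plan (embed a bulk subtree of size $n - \Theta(\sqrt n)$, then absorb the remaining $\Theta(\sqrt n)$ vertices via a matching argument into reserved pendant slots) is the right shape, and your parameter choice $d = C\Delta\sqrt n$, $m = \Theta(\sqrt n/\Delta)$ matches the paper's. You also correctly flag that the interface between the bulk embedding and the absorption is where the difficulty sits. But you leave that difficulty unsolved, and the way you propose to ``verify Hall's condition using expansion'' actually fails for the star-matching you need.

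Here is the concrete gap. After you remove the pendant leaves of $T$ and embed the bulk $T'$ by Theorem~\ref{almost}, the attachment points --- the images $\phi(A)$ of the parents of those leaves, or (if you strip pendant stars instead) the images $\phi(B)$ of the star centers --- are arbitrary vertices inside the block $V_1$ where $T'$ landed. Your absorption step needs a \emph{star-matching} (a parent may have up to $\Delta-1$ removed children), and the relevant Hall-type lemmas (Lemma~\ref{star} or Lemma~\ref{star1}) require expansion in \emph{both} directions, or a minimum-degree condition: you need every leftover vertex $w\in U$ to have $\ge m$ neighbours in $\phi(A)$, or every small $Y\subseteq U$ to satisfy $|N(Y)\cap\phi(A)|\ge d'|Y|$. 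Expansion of $G$ into the reserved set $R$ gives you the opposite direction, namely $|N(S,R)|\ge d'|S|$ for small $S\subseteq\phi(A)$, but it says nothing about the neighbourhoods \emph{into} the arbitrary set $\phi(A)$. This is exactly the ``one-sided expansion'' obstruction the paper explicitly highlights in its outline, and it is why your ``for small $S$ the $d$-vertex-expansion of $G$ restricted to $R$ does the job'' does not close the argument. You acknowledge the need for a ``strengthened almost-spanning lemma'' that prescribes where the pendant slots land, but you do not supply it, and it is precisely the content of the theorem.

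What the paper actually does to resolve this: it applies Corollary~\ref{newdiv} to split into a pendant-star case and a caterpillar case, reserves several expansion blocks $V_1,\dots,V_4$ via Lemma~\ref{randompart}, embeds the bulk in $V_1$, and then runs an \emph{intermediate} (star-)matching step that forces the star centers (the set $B$) to be embedded into $V_2\cup V_3$, a union of reserved blocks into which $G$ still expands. Because a maximal matching from $\phi(A)$ into $V_2$ may leave up to $m$ vertices unmatched, the paper enlarges into $V_3$ to finish covering $\phi(A)$; this is where the threshold $d=\Theta(\Delta\sqrt n)$ is tight. Only after this relocation does the final star-matching (Lemma~\ref{star}) have the required two-sided hypotheses. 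The caterpillar case needs a separate path-cover lemma (Theorem~\ref{pathcover2}), which your sketch does not accommodate at all: those trees do not supply $\Omega(\sqrt n)$ pendant slots in the sense your absorption step requires. So the proposal is not wrong in spirit, but it is missing both the key structural dichotomy and the mechanism for placing the attachment points into a block with usable expansion, which together are the real content of the proof.
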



For smaller values of $\Delta$, we further improve the result as follows.
\begin{theorem}\label{th1+}
There exists $n_0\in \N$ such that for all integers $n,\De$ with $n\ge n_0$ and $\Delta\ge 2$, every
$(n, d)$-expander with \[d\ge\De^{5\sqrt{\log n}}\] is $\mathcal{T}(n, \Delta)$-universal.
\end{theorem}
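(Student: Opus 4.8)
The plan is to prove Theorem~\ref{th1+} by essentially the same strategy as Theorem~\ref{th2}, tracking how the expansion parameter $d$ enters the argument and observing that the choice $d = \Delta^{5\sqrt{\log n}}$ is exactly strong enough to make the recursion close. The starting point is Theorem~\ref{almost}: an $(n,d)$-expander is already $\mathcal{T}(n - 4\Delta\lceil n/(2d)\rceil, \Delta)$-universal, so if $d \ge \Delta^{5\sqrt{\log n}}$ then the number of ``missing'' vertices is at most roughly $2\Delta n/d = n/\Delta^{5\sqrt{\log n}-1}$, which is a $o(1)$ fraction of $n$. So the real content is a \emph{completion} or \emph{absorption} step: given a near-spanning embedding of a cleverly chosen subtree $T' \subseteq T$ into $G$, one must extend it to a spanning embedding of all of $T$ using the few remaining vertices of $G$, exploiting the expansion of $G$ restricted to the leftover set.

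Concretely, I would first split the target tree $T \in \mathcal{T}(n,\Delta)$ into a large subtree $T_0$ together with a bounded-size collection of small subtrees hanging off of $T_0$ at well-separated ``attachment'' vertices; this is the standard tree-cutting lemma (a bare-handed version appears in Montgomery's work and in \cite{JKS12}), which guarantees that $T$ has a set of $O(n/d)$ edges whose removal leaves one giant component and many pieces each of size $O(d)$ — or alternatively a decomposition into a path-like backbone plus small pendant trees. The key structural point we need is that $T$ contains many vertices that are far apart in $T$ and each has a small subtree dangling from it, so that these dangling subtrees can be embedded one at a time greedily. Then I would embed the backbone $T_0$ into $G$ using Theorem~\ref{almost} (applied with the expansion of $G$), reserving a small random-like subset $R \subseteq V(G)$ of size $\Theta(n/d \cdot \Delta) = o(n)$ that will host the pendant subtrees, and arguing that $G$ still $d'$-expands into $R$ for a suitable $d'$ because $R$ was chosen to inherit expansion — this is where one uses condition~(i) of the expander definition together with a union bound / dependent random choice to show the leftover set is itself a decent expander.

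The completion step is where the quantitative gain over Theorem~\ref{thm:JKS} comes from, and it is the main obstacle. One must embed each pendant subtree $S_i$ (of size $O(d)$, maximum degree $\le \Delta$) rooted at an already-embedded attachment vertex $v_i$, using only vertices of $R$ not yet used, and do this for all $\approx n/d$ pieces. The natural tool is again Theorem~\ref{almost} applied inside $G[R \cup \{\text{used attachment vertices}\}]$, but one has to be careful that after embedding $S_1, \dots, S_{i-1}$ the remaining part of $R$ still expands well enough to host $S_i$; this requires that $R$ had expansion with room to spare, which forces $|R|$ and hence the error term to be controlled, and it is precisely the inequality $d \ge \Delta^{5\sqrt{\log n}}$ (rather than $d \ge \Delta \sqrt{n}$) that lets the bookkeeping survive: the $5\sqrt{\log n}$ in the exponent is what one gets from iterating a constant-factor loss $O(\log n / \log \Delta) = O(\sqrt{\log n})$ times when $\Delta$ is itself as small as, say, $2^{\sqrt{\log n}}$. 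I would set up the iteration so that at stage $j$ we have an expander on $n_j$ vertices with parameter $d_j$, embed all but a $1/\mathrm{poly}(d_j)$ fraction, and recurse on the leftover with a slightly degraded parameter; after $O(\sqrt{\log n})$ stages the leftover is small enough (say, at most $\Delta$ vertices, or a single small tree) to finish off by brute force using condition~(ii) of the expander definition, i.e.\ the $m$-joinedness of $G$. Assembling the stages and checking that $d \ge \Delta^{5\sqrt{\log n}}$ keeps $d_j$ above the threshold $2\Delta$ needed by Theorem~\ref{almost} at every stage is the heart of the proof; everything else is the standard tree-cutting and reservoir machinery.
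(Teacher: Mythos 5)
Your high-level outline (use Theorem~\ref{almost} to embed an almost-spanning subtree, then complete using expansion of the leftover) matches the skeleton of the paper, but the completion step---which you yourself call ``the main obstacle'' and ``the heart of the proof''---is left as a vague recursion sketch that does not actually close the central gap. The difficulty is not that the leftover set lacks expansion; it is that condition~(i) of Definition~\ref{expander} gives only one-sided expansion (small sets expand \emph{out}, not in), so when you have to attach the last pendant pieces to already-placed image vertices $\phi(v_i)$, you have no control over the neighbourhoods of those fixed vertices in the shrinking leftover set. Recursing ``with a slightly degraded parameter'' and then ``finishing off by brute force using $m$-joinedness'' does not resolve this: $m$-joinedness says nothing about a single fixed vertex's neighbours in a set of size $o(m)$. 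The paper's solution to precisely this problem is the new \emph{tree array} construction (Definition~\ref{def:tree}, Lemma~\ref{rec1}, Corollary~\ref{rec}): the parents of the leaves are matched into a reserved block $V_2$; the (fewer than $m$) unmatched ones cannot be fixed by direct augmentation, so the paper instead re-routes each unmatched parent through a path of length $h\approx\sqrt{\log n}$ landing in $V_2$, simultaneously regrowing the $\Delta$-ary subtrees that were pruned off the removed path. Only after this surgery do the images of all leaf-parents sit inside a block with expansion, at which point the two-sided star-matching Lemma~\ref{star1} applies. Nothing in your proposal plays this role.

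Two further points where your proposal diverges from or misreads the argument. First, the decomposition: the paper does not use a ``backbone plus $O(d)$-size pendants'' decomposition (which need not exist for a bounded-degree tree in any useful form); it iteratively deletes all leaves $h=\lceil\sqrt{\log n}\rceil$ times to get $T_h$, applies Krivelevich's dichotomy (Lemma~\ref{lem:fact}) to $T_h$, and then handles the many-leaves case and three bare-path subcases (A, B, C) by variants of the tree-array surgery plus Montgomery's path cover; there is a single fixed partition of $V(G)$ into a constant number of blocks, not a multi-stage recursion on degraded expanders. Second, your account of where the exponent $5\sqrt{\log n}$ comes from---``iterating a constant-factor loss $O(\log n/\log\Delta)=O(\sqrt{\log n})$ times''---is incorrect: for $\Delta=2$ that quantity is $\log n$, not $\sqrt{\log n}$, so the heuristic does not fit the statement. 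The true source is the volume of the regrown trees: each of up to $m=n/(2d)$ unmatched parents requires a tree array containing $\Theta(h\cdot\Delta^{h})$ vertices (see the estimate in the proof of Corollary~\ref{rec}), and fitting $m\cdot h\cdot\Delta^{h}$ into a block of size roughly $n/\sqrt{d}$ forces $d\gtrsim\Delta^{\Theta(\sqrt{\log n})}$.
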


In fact, Theorem~\ref{th1+} implies Theorem~\ref{th1} and Theorem~\ref{thm:1'} by the following observations.

\begin{proposition}\label{f1}
Given $n,p,\be$ with $0<\beta\le\frac{pn}{400}$, every $n$-vertex $(p,\beta)$-bijumbled graph with minimum degree at least $4\sqrt{p\be n}$ is an $(n, d_1)$-expander for $d_1=\frac{pn}{4\beta}$. Moreover, every $(n, d,\lambda)$-graph with $\lambda<\frac{d}{8}$ is an $(n,\frac{d}{2\la})$-expander.
\end{proposition}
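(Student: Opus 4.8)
The plan is to verify the two expansion conditions of Definition~\ref{expander} directly from the bijumbledness (resp. spectral) hypothesis, the standard route being the Expander Mixing Lemma. I would begin with the bijumbled case. For the first condition, fix $X\subseteq V(G)$ with $1\le|X|<\frac{n}{2d_1}=\frac{2\beta}{p}$, and set $Y=V(G)\setminus(X\cup N_G(X))$, so that $e_G(X,Y)=0$. Then bijumbledness gives $p|X||Y|\le\beta\sqrt{|X||Y|}$, hence $|Y|\le\frac{\beta^2}{p^2|X|}$, and therefore
\[
|N_G(X)|\ge n-|X|-\frac{\beta^2}{p^2|X|}.
\]
I want to show the right-hand side is at least $d_1|X|=\frac{pn}{4\beta}|X|$. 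In the regime $|X|<\frac{2\beta}{p}$ the term $\frac{\beta^2}{p^2|X|}$ dominates $|X|$, and the bound reduces to checking $n-|X|-\frac{\beta^2}{p^2|X|}\ge\frac{pn}{4\beta}|X|$; using $|X|<\frac{2\beta}{p}$ and $\beta\le\frac{pn}{400}$ this should follow after routine manipulation — the key slack is that $\frac{pn}{4\beta}|X|<\frac{n}{2}$ on this range and one checks the loss terms are each at most (say) $\frac{n}{8}$. The minimum degree condition $\delta(G)\ge4\sqrt{p\beta n}$ is needed precisely to handle very small $|X|$ (notably $|X|=1$), where the counting bound above is too weak; here one instead observes $|N_G(X)|\ge\delta(G)\ge4\sqrt{p\beta n}\ge d_1|X|$ whenever $|X|$ is below $\frac{4\sqrt{p\beta n}}{d_1}=\frac{16\beta^{3/2}}{p^{1/2}n^{1/2}}$, and the counting bound covers the rest.

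For the second condition, take disjoint $X,Y$ with $|X|=|Y|\ge\frac{n}{2d_1}=\frac{2\beta}{p}$. If $e_G(X,Y)=0$ then bijumbledness forces $p|X||Y|\le\beta\sqrt{|X||Y|}$, i.e. $|X|=|Y|\le\frac{\beta}{p}<\frac{2\beta}{p}$, a contradiction. Hence $e_G(X,Y)>0$, establishing (ii). This completes the bijumbled statement.

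For the $(n,d,\lambda)$-statement, I would invoke the fact quoted in the paper that an $(n,d,\lambda)$-graph is $(d/n,\lambda)$-bijumbled, so it suffices to see the hypotheses of the first part hold with $p=d/n$, $\beta=\lambda$: the condition $\beta\le\frac{pn}{400}$ becomes $\lambda\le\frac{d}{400}$, which is weaker than $\lambda<\frac{d}{8}$ only in the wrong direction, so this naive reduction does \emph{not} immediately give $d_1=\frac{pn}{4\beta}=\frac{d}{4\lambda}$; instead one wants the sharper value $d_1=\frac{d}{2\lambda}$, and a $d$-regular graph has no minimum-degree issue. So here I would instead run the Expander Mixing Lemma argument directly: for $X$ with $|X|<\frac{n}{2d_1}=\frac{\lambda}{d}n\cdot\frac1{?}$ — more precisely $|X|<\frac{n}{d/\lambda}=\frac{\lambda n}{d}$ — taking $Y=V\setminus(X\cup N_G(X))$ and applying $\bigl|e_G(X,Y)-\frac{d}{n}|X||Y|\bigr|\le\lambda\sqrt{|X||Y|}$ with $e_G(X,Y)=0$ yields $|Y|\le\frac{\lambda^2 n^2}{d^2|X|}$, hence $|N_G(X)|\ge n-|X|-\frac{\lambda^2 n^2}{d^2|X|}$, and one checks this is $\ge\frac{d}{2\lambda}|X|$ on the stated range using $\lambda<d/8$; condition (ii) is identical to before with the substitution. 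I expect the main obstacle to be nothing conceptual but rather the bookkeeping in the first expansion condition: pinning down exactly which small-$X$ regime is covered by the minimum-degree bound versus the vertex-counting bound, and verifying the constants $400$, $4$, $8$ are chosen so the two regimes overlap. A clean way to organize this is to split at $|X|=\frac{n}{d_1^2}$ (roughly), handle $|X|$ below it by minimum degree and above it by counting, and check the hypotheses force $\frac{n}{d_1^2}$ to sit comfortably inside $[1,\frac{n}{2d_1})$.
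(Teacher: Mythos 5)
There is a genuine gap in how you propose to handle the first expansion condition for small $|X|$, and it is structural rather than a matter of chasing constants. Your plan is to cover $|X|$ above some threshold by the counting bound $|N_G(X)|\ge n-|X|-\frac{\beta^2}{p^2|X|}$, and below it by the trivial minimum-degree bound $|N_G(X)|\ge \delta(G)-|X|+1$. The counting bound only gives $|N_G(X)|\ge d_1|X|$ once $\frac{\beta^2}{p^2|X|}\lesssim n/2$, i.e.\ once $|X|\gtrsim \frac{\beta^2}{p^2 n}$, whereas the trivial minimum-degree bound only gives $|N_G(X)|\ge d_1|X|$ when $|X|\lesssim \frac{\delta}{d_1}=\frac{16\beta^{3/2}}{\sqrt{pn}}$. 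These two ranges overlap only when $\beta\lesssim p^3 n$, which is \emph{not} forced by $\beta\le\frac{pn}{400}$: as soon as $p\le\frac{1}{160}$ (say) one can take $\beta$ in $(64p^3n,\,\frac{pn}{400}]$ and there is a hole. Concretely, with $n=10^6$, $p=10^{-3}$, $\beta=2.5$, $d_1=100$, $\delta\ge 200$: the minimum-degree bound covers $|X|\le 2$, while your counting bound is vacuous for $|X|\le 6$, so $|X|\in\{3,4,5,6\}$ is uncovered.

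The paper closes exactly this hole by a sharper argument in the small-$|X|$ regime: instead of invoking the minimum degree by itself, it argues by contradiction, pairing the degree-sum lower bound $\delta|X|\le e_G(X, X\cup N_G(X))$ (every edge from $X$ stays inside $X\cup N_G(X)$) with the bijumbledness \emph{upper} bound $e_G(X, X\cup N_G(X))\le p|X|\,|X\cup N_G(X)|+\beta\sqrt{|X|\,|X\cup N_G(X)|}$; substituting $|X\cup N_G(X)|<(d_1+1)|X|$ and $|X|<100\frac{\beta^2}{p^2n}$ forces $\delta<4\sqrt{p\beta n}$, contradicting the hypothesis. This works uniformly over the whole small-$|X|$ range, with no gap. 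The same issue afflicts your $(n,d,\lambda)$ sketch: the pure counting bound $|N_G(X)|\ge n-|X|-\frac{\lambda^2 n^2}{d^2|X|}$ is vacuous for $|X|=1$ when $\lambda$ is close to $d/8$ (the subtracted term is about $n^2/64$), so ``$d$-regular hence no minimum-degree issue'' does not rescue you there; the paper again runs the same contradiction via $d|X|\le e_G(X,X\cup N_G(X))$ and the Expander Mixing Lemma upper bound, landing on $\frac{d}{2\lambda}<\sqrt{\frac{d}{2\lambda}+1}+1$, which contradicts $\lambda<d/8$. Your verification of condition (ii) (joinedness) is fine and matches the paper.
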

We include a short proof of Proposition~\ref{f1} in Appendix~\ref{APP2}.
At last, we remark that $(n,\frac{d}{2\la})$-expander graphs have a minimum degree $\frac{d}{2\la}$, which is also considerably weaker than that of $(n, d,\lambda)$-graphs (which are $d$-regular).

\subsection{Further implications}

Our main result also allows us to improve several other interesting results of Johanssen, Krivelevich and Samotij~\cite{JKS12}.

\subsubsection{Locally sparse expanders}

Johanssen, Krivelevich and Samotij~\cite{JKS12} studied constructions of tree-universal graphs which are locally sparse. Using probabilistic arguments (Lemma~6.2 in \cite{JKS12}) together with Theorem~\ref{thm:JKS}, they were able to prove the existence of a tree-universal graph with small clique number.
\begin{theorem}\cite[Theorem 2.4]{JKS12}\label{local}
There exists an absolute constant $c>0$ such that the following statement
holds. Let $n \in\N$ and let $r \in\N$ with $r \ge 5$. Then there exists a graph with clique number at
most $r$ that is $T(n,cn^{1/3-2/(r+2)}/ \log n)$-universal.
\end{theorem}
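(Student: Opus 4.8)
The plan is to reduce the statement to the existence of a suitable expander of bounded clique number and then invoke Theorem~\ref{thm:JKS}. Fix $r\ge 5$, set $\Delta:=c\,n^{1/3-2/(r+2)}/\log n$ and $d:=7\Delta n^{2/3}=\Theta\!\left(n^{1-2/(r+2)}/\log n\right)$; the exponent $1/3-2/(r+2)$ is positive precisely because $r\ge 5$, and $\Delta\le c'n^{1/3}$ for large $n$. Thus, by Theorem~\ref{thm:JKS}, it suffices to produce a $K_{r+1}$-free $(n,d)$-expander, since such a graph is $\mathcal{T}(n,\Delta)$-universal. This existence statement is essentially Lemma~6.2 of \cite{JKS12}, which I would establish by the probabilistic deletion method.

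Take $G\sim G(n,p)$ with $p:=c_0\,n^{-2/(r+2)}/\log n$ for a small absolute constant $c_0$ (much larger than $c$), so that $D:=np=c_0\,n^{r/(r+2)}/\log n$ satisfies $D/\log n\to\infty$. I would show that with probability $1-o(1)$ the graph $G$ has three properties. \textbf{(a)} Chernoff bounds together with a union bound over vertex sets show that every vertex has degree $(1\pm o(1))D$, that $G$ has the expansion and joinedness needed for a much stronger statement than ``$(n,d)$-expander'' (roughly, $|N_G(X)|\ge\tfrac{1}{2}D|X|$ while $|X|\le 1/p$, $|N_G(X)|\ge 0.99n$ for $1/p\le|X|<n/(2d)$, and $G$ is $\lceil n/(2d)\rceil$-joined), provided $D\gg\log n$. \textbf{(b)} The expected number of copies of $K_{r+1}$ in $G$ is $\binom{n}{r+1}p^{\binom{r+1}{2}}=\Theta\!\left(n^{2(r+1)/(r+2)}(\log n)^{-\binom{r+1}{2}}\right)$, which the choice of $p$ makes far smaller than the number of edges, so w.h.p.\ $G$ has at most that many copies. \textbf{(c)} Crucially, for each vertex $v$ the number $X_v$ of copies of $K_{r+1}$ through $v$ — the number of $K_r$'s in $G[N_G(v)]$ — has expectation $\Theta\!\left(n^{r}p^{\binom{r+1}{2}}\right)=\Theta\!\left(D\,(\log n)^{1-\binom{r+1}{2}}\right)=o(D)$; since $p$ lies polynomially above the $K_r$-threshold inside a typical neighbourhood, $X_v$ is concentrated (Janson's inequality, or Kim--Vu), so a union bound gives $X_v\le D/100$ for all $v$ simultaneously. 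It is precisely the inequality $n^{r}p^{\binom{r+1}{2}}=o(D)$ that forces the exponent $2/(r+2)$ on $p$ — and hence $1/3-2/(r+2)$ on $\Delta$ — with the extra $\log n$ supplying the slack that turns ``$o(1)$'' into an honest constant and absorbs the concentration losses.

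Given such a $G$, form $G'$ by deleting one arbitrary edge from each copy of $K_{r+1}$. Then $G'$ is $K_{r+1}$-free, and by \textbf{(c)} every vertex is incident to at most $X_v\le D/100$ deleted edges. A short counting argument — using that for a set $X$ which is not too small most vertices of $N_G(X)$ send $\Theta(|X|p)$ edges to $X$, so only a small fraction of $N_G(X)$ can be cut off by the $\le D|X|/100$ deleted edges incident to $X$ — shows that $G'$ still satisfies $|N_{G'}(X)|\ge d|X|$ for every $X$ with $|X|<n/(2d)$ and is still $\lceil n/(2d)\rceil$-joined. Hence $G'$ is a $K_{r+1}$-free $(n,d)$-expander, and Theorem~\ref{thm:JKS} with $\Delta=c\,n^{1/3-2/(r+2)}/\log n$ yields that $G'$ is $\mathcal{T}(n,\Delta)$-universal.

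The main obstacle is step \textbf{(c)}: one must show that the copies of $K_{r+1}$ are spread out over the vertices (equivalently the edges) sharply enough that the deletion cannot concentrate the removed edges on any small set, as an adversarial $o(1)$-fraction of $G$'s edges could isolate vertices or wreck the expansion of a small set. The global count of $K_{r+1}$'s does not suffice for this; one genuinely needs the per-vertex bound $n^{r}p^{\binom{r+1}{2}}=o(D)$ and its concentration, and this bound is exactly what pins down the exponent $1/3-2/(r+2)$. The remaining steps are routine first- and second-moment computations for $G(n,p)$.
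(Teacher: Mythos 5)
Your reduction — take $G\sim G(n,p)$ with $p\sim n^{-2/(r+2)}/\log n$, delete one edge from each $K_{r+1}$ to get a $K_{r+1}$-free $(n,7\Delta n^{2/3})$-expander, and then invoke Theorem~\ref{thm:JKS} — is exactly the route the paper indicates (it attributes the statement to Lemma~6.2 of \cite{JKS12} combined with Theorem~\ref{thm:JKS}), and your bookkeeping is correct: the per-vertex clique count $n^{r}p^{\binom{r+1}{2}}\ll D=np$ is indeed what pins the exponent $1/3-2/(r+2)$, and the extra $\log n$ supplies the slack.

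There is, however, a genuine gap in the deletion step, specifically in verifying condition~(ii) of Definition~\ref{expander} for $G'$. Your per-vertex bound $X_v\le D/100$ gives that the deleted graph $H$ has maximum degree $\le D/100$, and this is enough for condition~(i): for $|X|<1/p$ one has $|N_G(X)|\approx D|X|$ while at most $D|X|/100$ neighbours can be severed, and for $1/p\le |X|<m$ each vertex of $N_G(X)$ sends $\Theta(|X|p)$ edges to $X$, so at most $D/(100p)=n/100$ neighbours can be severed. But for condition~(ii) with $|X|=|Y|=m=\lceil n/(2d)\rceil\approx 1/p$, one has $e_G(X,Y)\approx m^2p=\Theta(m)$, whereas the max-degree bound on $H$ permits as many as $mD/100\gg m$ deleted edges incident to $X$, and nothing in your argument prevents those from all landing in $Y$. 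So the ``short counting argument'' you cite does not by itself rule out $e_{G'}(X,Y)=0$. To close this you would need to localise the deletions further — for example, exploit that the per-edge clique count $n^{r-1}p^{\binom{r+1}{2}-1}=O\bigl((\log n)^{1-\binom{r+1}{2}}\bigr)=o(1)$ to argue that only an $o(1)$ fraction of the $\Theta(m)$ edges in $X\times Y$ lie in any $K_{r+1}$, and then prove enough concentration to survive a union bound over the $\binom{n}{m}^2$ pairs — or else choose which edge to delete inside each $K_{r+1}$ more judiciously. This is precisely where the actual work of Lemma~6.2 of \cite{JKS12} lies, and it is not a routine second-moment computation.
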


Moreover, they~\cite{JKS12} posed the question of finding constructions that are triangle-free or even have large girth whilst keeping tree-universality. A promising candidate is the celebrated triangle-free construction of an $(n, d, \la)$-graph due to Alon~\cite{A94} with $d=\Theta(n^{2/3})$ and $\la=\Theta(n^{1/3})$ or its generalizations to sparse expanders without short odd cycles~\cite{AK98,KS06}.
Theorem~\ref{th1} implies that these graphs are tree-universal.

\begin{theorem}
For $\De,r\in \N$ with $r\ge 1$ and sufficiently large $n \in\N$, there exists an $n$-vertex graph $G$ that is $T(n,\De)$-universal and contains no odd cycle of length at most $2r + 1$. Moreover, $G$ can be explicitly constructed.
\end{theorem}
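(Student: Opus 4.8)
The plan is to reduce the statement to Theorem~\ref{th1} (or, equivalently, to Theorem~\ref{thm:1'}) by feeding it an explicit pseudorandom graph that already has no short odd cycle. The point is that the spectral‑gap threshold in Theorem~\ref{th1}, namely $\la\le d/(2\De^{5\sqrt{\log n}})$, is \emph{subpolynomial} in $n$ once $\De$ (and $r$) are fixed, so it is comfortably met by the known locally sparse expanders of near‑optimal pseudorandomness. Concretely, for $r=1$ one takes Alon's triangle‑free $(n,d,\la)$‑graph~\cite{A94} with $d=\Theta(n^{2/3})$ and $\la=\Theta(n^{1/3})$, and for general $r\ge 1$ one takes the generalizations in~\cite{AK98,KS06}, which produce explicit $(n,d,\la)$‑graphs $H$ with no odd cycle of length at most $2r+1$, with $d\ge n^{\ga}$ and $\la\le C\sqrt d$ for constants $\ga=\ga(r)>0$ and $C=C(r)$; in particular $d/\la\ge n^{\ga/2}/C$.

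\textbf{Applying the main theorem.} Since $\De$ and $r$ are fixed while $n\to\infty$, one has
\[
2\De^{5\sqrt{\log n}}=\exp\!\big(O_{\De}(\sqrt{\log n})\big)=n^{o(1)},
\]
so for all sufficiently large $n$ (depending only on $\De,r$) we get $2\De^{5\sqrt{\log n}}\le n^{\ga/2}/C\le d/\la$, i.e.\ $\la\le d/(2\De^{5\sqrt{\log n}})$. Theorem~\ref{th1} then yields that $H$ is $\mathcal T(n,\De)$‑universal. As $H$ has no odd cycle of length at most $2r+1$ by construction and is given explicitly, this proves the theorem for every order $n$ that is realized by the construction.

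\textbf{Passing to every large $n$.} The only remaining issue is that the algebraic constructions are available only for a sparse (but reasonably dense) set $\mathcal N$ of orders; one should check that consecutive elements of $\mathcal N$ differ by $o(n)$ (for Alon's graphs the orders are $\asymp q^{3}$ over prime powers $q$, and the gaps are $o(q^{3})$ by bounds on prime gaps; similarly for the higher‑odd‑girth variants). Given a large $n$, choose $N\in\mathcal N$ with $n\le N\le n+o(n)$ and delete any $N-n$ vertices from the corresponding graph $H$ to obtain an $n$‑vertex graph $G$. Deletion clearly preserves the absence of odd cycles of length $\le 2r+1$ and can be done explicitly. Moreover $G$ remains $(d/N,\la)$‑bijumbled (edge counts between vertex sets avoiding the deleted set are unchanged), and its minimum degree is at least $d-(N-n)\ge d/2$, which exceeds $4\sqrt{(\la)(d/N)n}=O(d^{3/4})$ for large $n$; since also $\la\le d/(8\De^{5\sqrt{\log n}})\le (d/N)\,n/(4\De^{5\sqrt{\log n}})$, Theorem~\ref{thm:1'} applies to $G$ and gives $\mathcal T(n,\De)$‑universality. (Alternatively one may re‑verify the $(n,d')$‑expander axioms after deletion and invoke Theorem~\ref{th1+}.)

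\textbf{Main obstacle.} The substantive work is not in the embedding — that is black‑boxed through Theorem~\ref{th1} — but in pinning down a suitable \emph{explicit} input: one must cite (or re‑derive) a family of $(n,d,\la)$‑graphs that simultaneously (i) forbids all odd cycles up to length $2r+1$, (ii) has $\la=d^{1/2+o(1)}$, and (iii) has orders dense enough that the deletion step above covers all large $n$. For $r=1$ all three are classical; for larger $r$ the expander‑mixing/pseudorandomness parameters of the generalized constructions, and the density of their orders, are exactly the points that need to be quoted carefully.
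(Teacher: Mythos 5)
Your main argument --- feed Alon's triangle-free $(n,d,\lambda)$-graph (or its generalizations without short odd cycles) into Theorem~\ref{th1}, using that $2\Delta^{5\sqrt{\log n}}=n^{o(1)}$ once $\Delta$ and $r$ are fixed --- is exactly the reduction the paper intends, and your verification that the spectral-gap hypothesis of Theorem~\ref{th1} is met by these constructions is correct.

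The extra paragraph about passing from the orders actually realized by the algebraic constructions to \emph{every} sufficiently large $n$ surfaces a genuine issue that the paper glosses over, but the deletion argument as written has a gap. The bound $\delta(G)\ge d-(N-n)$ is only useful when $N-n\le d/2$, whereas your density-of-orders discussion only gives $N-n=o(N)$; for Alon's construction $d=\Theta(N^{2/3})$, and the gaps between available orders (in some versions the graph lives on $2^{3k}$ vertices, so gaps are $\Theta(N)$; even the $q^3$-type parametrizations have gaps far exceeding $N^{2/3}$) can be vastly larger than $d$. So after deleting an arbitrary $(N-n)$-set, the minimum-degree hypothesis of Theorem~\ref{thm:1'} --- or, equivalently, the singleton expansion condition $|N(\{v\})|\ge d_1$ if you go through Theorem~\ref{th1+} --- is simply not guaranteed. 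A correct version would need to choose a deletion set $S$ that is ``spread out,'' e.g.\ so that every remaining vertex meets $S$ in at most $(1+o(1))d|S|/N$ neighbors; this is plausible from bijumbledness but requires either an explicit choice of $S$ (so as not to sacrifice the ``explicitly constructed'' clause) or a denser base family of constructions. Since the paper presents the theorem as a direct consequence of Theorem~\ref{th1} without discussing this point, your proposal tracks the paper's route and correctly flags a detail the paper leaves implicit, but the fix you propose is not yet complete.
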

Comparing with Theorem~\ref{local}, we also observe that using the same probabilistic arguments, Theorem~\ref{th2} (in place of Theorem~\ref{thm:JKS}) implies the existence of a graph with clique number at most $r$ that is $T(n,cn^{1/2-2/(r+2)}/ \log n)$-universal, where $r\ge 3$.
\subsubsection{Maker-Breaker Game}
An $(a : b)$ Maker-Breaker game is played on a finite hypergraph $H=(V, \mathcal{F})$ between two players,
Maker and Breaker. The game is played in turns, starting with Maker's turn. In each of
their turns, Maker claims $a$ and Breaker claims $b$ previously unclaimed vertices, respectively. Maker's objective is to claim all vertices of a hyperedge by the end of the game. In this case, Maker wins the game. Breaker's
objective is to claim at least one vertex in each hyperedge by the end of the game. In this
case, Breaker wins the game. The game ends when all vertices have been claimed, by which
time either Maker or Breaker have won. The numbers $a$ and $b$ are called the \emph{biases} of Maker and Breaker, respectively. We say that an $(a : b)$ Maker-Breaker game is Maker's win if Maker has a strategy that allows him to win the game regardless of Breaker's strategy, otherwise the game is Breaker's win. For a more detailed discussion, we refer the reader to \cite{BECK}.

Johanssen, Krivelevich and Samotij~\cite{JKS12} formulated a Maker-Breaker Expander Game.
\begin{definition}\cite[Maker-Breaker Expander Game]{JKS12}
For $n,\De \in\N$, the \emph{Maker-Breaker $(n,d)$-expander game} on a graph $G$ is the Maker-Breaker game on the hypergraph $H=(E(G), \mathcal{F})$, where $\mathcal{F}$ consists of all edge sets $F \< E(G)$ such that the subgraph $(V(G), F)$ is an $(n,d)$-expander.
\end{definition}
They showed that if the $(1 : b)$ Maker-Breaker expander game is played on an $(n, 15bd \log n)$-expander, then Maker can always secure all edges of an $(n, d)$-expander.
\begin{theorem}\cite{JKS12}\label{expgame}
There exists an absolute constant $n_0\in \N$ such that the following statement
holds. Let $n, b \in\N$ and $d\in \mathbb{R}$ satisfy $n \ge n_0$ and $d \ge 3$. Then the $(1 : b)$ Maker-Breaker
$(n, d)$-expander game is Maker's win on every $(n, 15bd \log n)$-expander.
\end{theorem}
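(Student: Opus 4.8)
The plan is to treat this as a biased Maker--Breaker game and design a strategy for Maker around a potential (weight) function, in the spirit of the biased Erd\H{o}s--Selfridge/Beck criterion, combined with the threat-response paradigm of Hefetz, Krivelevich and Szab\'o~\cite{HKS09}. Write $d':=15bd\log n$ and $m:=\lceil n/(2d)\rceil$. Everything is driven by the host $G$ being an $(n,d')$-expander in the sense of Definition~\ref{expander}: this forces $\delta(G)\ge d'$, gives $|N_G(X)|\ge d'|X|$ for all $X$ with $|X|<\lceil n/(2d')\rceil$, and makes $G$ be $\lceil n/(2d')\rceil$-joined, where $\lceil n/(2d')\rceil$ is far below $m$ since $d'>d$. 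Maker will play to maintain the invariant that Breaker can never complete any ``obstruction'' to the two defining properties of an $(n,d)$-expander.

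For condition (i), an obstruction is a pair $(X,Z)$ with $1\le|X|<m$, $X\subseteq Z$, and $|Z|<(d+1)|X|$; Maker \emph{destroys} it the moment he claims one edge of the reservoir $R_{X,Z}:=E_G(X,V(G)\setminus Z)$, and destroying all of them forces $|N_M(X)\setminus X|\ge d|X|$ for every small $X$. One checks $|R_{X,Z}|=\Omega(d')=\Omega(bd\log n)$ for every such pair: for $|X|<\lceil n/(2d')\rceil$ this follows from $|N_G(X)|\ge d'|X|$ and $|Z|<(d+1)|X|$, while for larger $X$ one tiles $X$ and $V(G)\setminus Z$ (both of size $\ge\lceil n/(2d')\rceil$, using $|Z|<(d+1)m<n$ when $d\ge 3$) into $\lceil n/(2d')\rceil$-blocks, each cross-pair of which spans a $G$-edge. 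For condition (ii) an obstruction is a disjoint pair $(X,Y)$ of $m$-sets, destroyed once Maker claims an edge of $E_G(X,Y)$; the same tiling gives $|E_G(X,Y)|=\Omega((d'/d)^2)$. Maker then runs the standard biased Erd\H{o}s--Selfridge strategy against the union of these two families: weight each still-alive obstruction $F$ by $(1+1/b)^{-u(F)}$, where $u(F)$ counts the edges of its reservoir not yet claimed by either player, and on each turn claim an unclaimed edge minimising the resulting total weight. Since Breaker claims only $b$ edges per round and $(1+1/b)^b\le e$, the usual computation shows the total weight never exceeds its initial value, so if that value is below $1$ no obstruction is ever completed and the edge set Maker claims forms an $(n,d)$-expander; handling both families at once via the summed weight function (or by first splitting $E(G)$ into two expanders) costs at most a constant factor in the bias, absorbed into the constant.

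The hard part is that, taken at face value, both obstruction families have size $2^{\Omega(n)}$ while the reservoirs have size only $\Omega(bd\log n)$, so the initial total weight is $2^{\Omega(n)}$ rather than below $1$ and the crude argument collapses; overcoming this is the real content of the theorem, and it is exactly where the extra $\log n$ factor in $15bd\log n$ is spent. I would resolve it by \emph{not} requiring Maker to dominate all obstructions simultaneously: an obstruction only becomes \emph{threatening}---contributing non-negligibly to the potential---after Breaker has already sunk $\Omega(b\log n)$ (respectively $\Omega((d'/d)^2)$) of his edges into its reservoir, and Breaker makes at most $|E(G)|=O(bdn\log n)$ moves in the whole game, so at any moment only a controlled number of obstructions are threatening; after grouping nested and heavily overlapping obstructions (so that a whole ``cluster'' costs Breaker essentially as much as a single one) and running the weight argument over the threatening obstructions only, the active potential can be kept below the threshold. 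Making this accounting precise---in particular choosing the grouping so that the $\Omega(bd\log n)$ reservoir size genuinely beats the number of simultaneously threatening clusters, smoothly joining the ``small-set'' and ``medium-set'' regimes of condition (i), and checking the $n\ge n_0$ and $d\ge 3$ bookkeeping---is the main obstacle; with it in hand, the weight-function strategy above completes the proof.
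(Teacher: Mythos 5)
Your framing of the problem --- Maker as a ``dual Breaker'' who must touch a reservoir of $G$-edges for each potential expansion violation, analyzed with a biased Erd\H{o}s--Selfridge/Beck potential --- is a reasonable starting point, and your reservoir computations for small $X$ (via $|N_G(X)|\ge d'|X|$) and for larger $X$ (via tiling into $\lceil n/(2d')\rceil$-blocks using joinedness) are correct. You also correctly put your finger on the obstacle: for intermediate and large $|X|$, and for the condition-(ii) obstructions, the raw sum $\sum (1+1/b)^{-|R|}$ is nowhere near $1$. The difficulty is even sharper than your diagnosis suggests: once $|X|\ge\lceil n/(2d')\rceil$ the $\log n$ in $d'=15bd\log n$ no longer compensates, since $|R|/b$ grows like $|X|\cdot d\log n\cdot(bd\log n/n)$ per obstruction while $\log_2$ of the number of obstructions of that size grows like $|X|\cdot d\cdot\log(n/|X|)$, so the criterion is off by a polynomial-in-$n$ factor at every such scale. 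Likewise $\Omega((d'/d)^2)=\Omega(b^2\log^2 n)$ is hopeless against $\log_2\binom{n}{\lceil n/(2d)\rceil}^2=\Theta(n\log d/d)$.

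The real issue is that the fix you sketch does not rise to a proof. ``Only threatening obstructions contribute'' and ``group nested/overlapping obstructions into clusters that cost Breaker as much as one'' are heuristics, not arguments: you neither define the clusters, nor bound their number, nor verify that the dynamic potential stays monotone under an explicit Maker move; and you acknowledge this yourself. Two concrete problems: (1) your premise that Breaker plays $O(bdn\log n)$ edges total is unjustified, since an $(n,15bd\log n)$-expander can have $\Theta(n^2)$ edges and nothing forces Breaker to stop after $O(bdn\log n)$ rounds; (2) even in a dynamic potential, Maker's own moves can shrink reservoirs (claiming an edge from $X$ to a new vertex $v$ removes all of $E_G(X,\{v\})$ from $R_X$), so it is not automatic that the potential is non-increasing under Maker's play. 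What a proof actually needs is a \emph{structural reduction} that shrinks the obstruction family before any potential argument is invoked --- for instance, the observation that if Maker's graph $M$ satisfies (a) $|N_M(X)|\ge d|X|$ for all $|X|<\lceil n/(2d')\rceil$ and (b) $M$ is $\lceil n/(2d')\rceil$-joined, then $M$ is automatically a full $(n,d)$-expander (violations at intermediate sizes would give disjoint sets $X$ and $V\setminus(X\cup N_M(X))$, each of size $\ge\lceil n/(2d')\rceil$ with no $M$-edge, and condition (ii) at scale $n/(2d)$ follows from (b) by taking $\lceil n/(2d')\rceil$-subsets). This collapses condition~(i) to exactly the regime where your Beck count does work. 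But then one still needs a separate mechanism for (b), since the na\"ive $\lceil n/(2d')\rceil$-pair reservoirs can be a single edge; that second layer is precisely the ``main obstacle'' you defer, and it is where the content of the theorem lives, so the gap is genuine and central rather than a bookkeeping detail.
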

Similarly they formulated a \emph{Maker-Breaker Tree-Universality Game} in which Maker tries to
claim a subset of the edges $F\<E(G)$ such that the subgraph $(V(G), F)$ is $\mathcal{T}(n,\De)$-universal. Using the tree-universality result in Theorem~\ref{thm:JKS}, they obtained the following corollary.
\begin{corollary}\cite{JKS12}
There exists an absolute constant $C>0$ such that the following statement
holds. Let $n,\De,b\in\N$ satisfy $\De \ge \log n$. Then the $(1 : b)$ Maker-Breaker $\mathcal{T}(n,\De)$-universality game is Maker's win on every $(n, d)$-expander with $d \ge Cb\De n^{
2/3}\log n$.
\end{corollary}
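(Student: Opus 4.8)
The plan is to pipeline the two $(n,d)$-expander facts already recorded above: the structural statement that expanders are tree-universal (Theorem~\ref{thm:JKS}) and the positional-game statement that Maker can build an expander (Theorem~\ref{expgame}). Both games live on the common board $E(G)$, and both target families are monotone increasing --- adding edges preserves being an $(n,d')$-expander and preserves $\mathcal{T}(n,\De)$-universality. Hence, writing $c$ for the absolute constant of Theorem~\ref{thm:JKS} and assuming for now $\De\le cn^{1/3}$, if Maker can force himself to claim the edge set of some $(n,7\De n^{2/3})$-expander, then the subgraph $(V(G),F)$ he owns at the end of the game is automatically $\mathcal{T}(n,\De)$-universal, so the very same strategy wins the $(1:b)$ Maker-Breaker $\mathcal{T}(n,\De)$-universality game.

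Concretely, I would set $d_0:=7\De n^{2/3}$ and apply Theorem~\ref{expgame} to the $(1:b)$ Maker-Breaker $(n,d_0)$-expander game. That theorem gives Maker a win on every $(n,15bd_0\log n)$-expander once $n\ge n_0$ and $d_0\ge 3$; the hypothesis $\De\ge\log n$ makes $d_0=7\De n^{2/3}\ge 7n^{2/3}\log n\ge 3$ for all large $n$, so the application is legitimate. Since $15bd_0\log n=105\,b\De n^{2/3}\log n$, taking $C:=105$ shows that on any $(n,d)$-expander with $d\ge Cb\De n^{2/3}\log n$ Maker first runs his winning strategy for the $(n,d_0)$-expander game and ends up owning (the edge set of) an $(n,d_0)$-expander, which by Theorem~\ref{thm:JKS} is $\mathcal{T}(n,\De)$-universal.

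The only loose end is the side condition $\De\le cn^{1/3}$ needed to invoke Theorem~\ref{thm:JKS}: if $\De>cn^{1/3}$ then the hypothesis $d\ge Cb\De n^{2/3}\log n$ already forces $d=\Omega(n\log n)$, and an $n$-vertex graph with expansion that strong is forced to be complete (once $d\ge n/2$, condition (i) of Definition~\ref{expander} is vacuous and condition (ii) forces $G=K_n$, which is trivially $\mathcal{T}(n,\De)$-universal and on which Maker wins immediately), so that regime is handled for free. I therefore do not expect a genuine obstacle here --- the corollary is a clean consequence of feeding the output guarantee of the expander game into the structural theorem. If anything needs care, it is the ``same board, nested target families'' bookkeeping: one must verify that Maker is entitled to run the expander-game strategy verbatim inside the universality game, which is precisely what the common vertex set $E(G)$ together with the upward-closedness of both target families provide.
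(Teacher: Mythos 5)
Your core derivation is precisely the intended one, and indeed the paper does not spell out a proof since it cites the corollary directly from \cite{JKS12}: you set $d_0=7\De n^{2/3}$, apply Theorem~\ref{expgame} to have Maker secure an $(n,d_0)$-expander on any host that is an $(n,15bd_0\log n)$-expander, observe $15bd_0\log n=105\,b\De n^{2/3}\log n$, and invoke Theorem~\ref{thm:JKS} to conclude Maker's subgraph is $\mathcal{T}(n,\De)$-universal. The observation that both target families are upward-closed on the common board $E(G)$ is exactly what makes this composition legitimate, and $C=105$ is the right constant.

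However, your treatment of the side condition $\De\le cn^{1/3}$ contains a genuine gap. When $\De>cn^{1/3}$ you correctly deduce that the expansion hypothesis forces the host graph to be $K_n$, and that $K_n$ is tree-universal as a \emph{graph} --- but you then claim Maker ``wins immediately,'' and that simply does not follow. In a Maker--Breaker game the universality of the board is not a win condition; Maker must end the game owning an edge set $F$ with $(V,F)$ being $\mathcal{T}(n,\De)$-universal, and in the $(1:b)$ game Maker claims only about a $1/(b+1)$ fraction of $E(K_n)$. For $b\ge n-1$, say, Maker cannot even claim $n-1$ edges, so Maker cannot build a single spanning tree, let alone a tree-universal subgraph. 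Thus ``handled for free'' is not correct: you still owe an argument that Maker can force a universal subgraph of $K_n$ under the relevant constraints on $b$ and $\De$, or an argument that the corollary's hypotheses implicitly rule this regime out. A smaller loose end in the same vein: Theorem~\ref{expgame} requires $n\ge n_0$, while the corollary has no explicit lower bound on $n$, so the small-$n$ cases also need a word.
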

Combining Theorems~\ref{th2},~\ref{th1+} and~\ref{expgame}, we improve the bound on the expansion as follows.
\begin{corollary}
There exists an absolute constant $C>0$ such that the following statement
holds. Let $n,\De,b\in\N$ satisfy $\De \ge \log n$. Then the $(1 : b)$ Maker-Breaker $\mathcal{T}(n,\De)$-universality game is Maker's win on every $(n, d)$-expander with $d \ge \min\{\De n^{1/2}, \De^{5\sqrt{\log n}}\}Cb\log n$.
\end{corollary}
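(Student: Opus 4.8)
The plan is to chain together three results: the tree-universality Theorems~\ref{th2} and~\ref{th1+}, which turn expansion into $\mathcal{T}(n,\Delta)$-universality, and the expander-game Theorem~\ref{expgame}, which lets Maker secure the edge set of an expander inside a sufficiently strong expander. Let $C_0$ and $c_0$ be the absolute constants supplied by Theorem~\ref{th2}. We may assume $\Delta\le c_0\sqrt n$ (the range of interest), and we set
\[
d^\ast:=\min\bigl\{\Delta^{5\sqrt{\log n}},\ C_0\Delta n^{1/2}\bigr\}.
\]

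The first step is the reduction: every $(n,d^\ast)$-expander $H$ is $\mathcal{T}(n,\Delta)$-universal. If the minimum defining $d^\ast$ is attained by $\Delta^{5\sqrt{\log n}}$, then $H$ is an $(n,\Delta^{5\sqrt{\log n}})$-expander, and Theorem~\ref{th1+} applies (it holds for all $\Delta\ge 2$ once $n$ is large, and $\Delta\ge\log n\ge 2$ here). If the minimum is attained by $C_0\Delta n^{1/2}$, then $H$ is an $(n,C_0\Delta n^{1/2})$-expander with $\Delta\le c_0\sqrt n$, and Theorem~\ref{th2} applies. In both cases $H$ is $\mathcal{T}(n,\Delta)$-universal.

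The second step is to run the expander game with expansion parameter $d^\ast$. Since $\Delta\ge\log n$ and $n$ is large, $d^\ast\ge 3$, so Theorem~\ref{expgame} shows the $(1:b)$ Maker-Breaker $(n,d^\ast)$-expander game is Maker's win on every $(n,15bd^\ast\log n)$-expander. Put $C:=15\max\{1,C_0\}$. From the definition of $d^\ast$ one has $d^\ast\le\max\{1,C_0\}\cdot\min\{\Delta n^{1/2},\Delta^{5\sqrt{\log n}}\}$, so the hypothesis $d\ge\min\{\Delta n^{1/2},\Delta^{5\sqrt{\log n}}\}\,Cb\log n$ forces $d\ge 15bd^\ast\log n$. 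Invoking the routine monotonicity that an $(n,d)$-expander is an $(n,d')$-expander for every $2\le d'\le d$, we conclude that $G$ itself is an $(n,15bd^\ast\log n)$-expander. Hence Maker has a strategy in the $(1:b)$ $(n,d^\ast)$-expander game on $G$ that claims an edge set $F\subseteq E(G)$ with $(V(G),F)$ an $(n,d^\ast)$-expander; by the first step $(V(G),F)$ is $\mathcal{T}(n,\Delta)$-universal, i.e.\ the $(1:b)$ Maker-Breaker $\mathcal{T}(n,\Delta)$-universality game is Maker's win on $G$.

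I do not expect a serious obstacle: the argument is a bookkeeping exercise threading three black boxes. The two points needing a sentence of care are verifying that $d^\ast$ meets the hypotheses of whichever of Theorem~\ref{th2}/Theorem~\ref{th1+} is being invoked (the case split on which term attains the minimum, together with the constant tracking), and the monotonicity of the $(n,d)$-expander property in $d$, which is what allows Theorem~\ref{expgame}'s requirement that ``$G$ is an $(n,15bd^\ast\log n)$-expander'' to follow from the weaker-looking hypothesis $d\ge 15bd^\ast\log n$.
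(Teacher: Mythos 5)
Your plan — secure an $(n,d^\ast)$-expander via Theorem~\ref{expgame} and then invoke Theorem~\ref{th2} or Theorem~\ref{th1+} for universality — is exactly the chain the paper intends when it writes ``Combining Theorems~\ref{th2},~\ref{th1+} and~\ref{expgame}.'' Your two pieces of bookkeeping are also the right ones: the constant tracking $d^\ast\le\max\{1,C_0\}\min\{\Delta n^{1/2},\Delta^{5\sqrt{\log n}}\}$ so that $C=15\max\{1,C_0\}$ works, and the monotonicity that an $(n,d)$-expander is an $(n,d')$-expander for $2\le d'\le d$ (which one can check directly: condition~(ii) weakens, condition~(i) for sets with $n/(2d)\le|X|<n/(2d')$ follows from the $\lceil n/(2d)\rceil$-joinedness since then $|N(X)|\ge n(1-1/d')>d'|X|$). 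This monotonicity is genuinely needed and the paper leaves it implicit, so flagging it is a plus.

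The one point you wave at rather than settle is ``We may assume $\Delta\le c_0\sqrt n$.'' That reduction is not automatic: the corollary carries no upper bound on $\Delta$, Theorem~\ref{th1+} gives nothing useful once $\Delta^{5\sqrt{\log n}}$ exceeds $n$, and Theorem~\ref{th2} is simply unavailable when $\Delta>c_0\sqrt n$. One is tempted to say the hypothesis then forces $d$ so large that no $(n,d)$-expanders exist, but that is not literally true — e.g.\ $K_n$ satisfies Definition~\ref{expander} for \emph{every} $d$, so the hypothesis does not become vacuous. In fairness, this is an imprecision inherited from the corollary (and from Theorem~\ref{expgame}) as stated, not something your argument introduces; but in a write-up you should either add the hypothesis $\Delta\le c\sqrt n$ explicitly, or spend a sentence arguing that for $\Delta>c_0\sqrt n$ and $n$ large the expanders that survive the degree lower bound also have enough edges for Maker to win directly. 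As it stands the sentence asserts a reduction that has not been proved.
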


\subsection{Technical Contribution}

Our approach builds on previously known ones but also introduces new twists.
A crucial new idea is the \emph{reconstruction of tree array} (see Definition~\ref{def:tree} and Lemma~\ref{rec1}), which allows us to reconstruct local parts of the almost spanning tree so that we can embed a specific set of vertices of the tree to a (small) vertex set with good expansion property.
In particular, this is also the bottle-neck of the current proof, that is, the expansion property required in our proof comes from the need of reconstruction of paths of length $\sqrt{\log n}$, which can be understood as our requirement on the radius of the host graph.
Although the idea is natural, we are not aware of similar treatments in the literature.
Finally, we expect that the techniques developed here will find other applications in related problems, in particular, in embedding (connected) spanning substructures.

\subsection{Basic notation}
Given a graph $G$ and two vertex sets $A,B\subseteq V(G)$, we define $\Gamma_G(A):=\bigcup_{v\in A}N_G(v)$ and $N_G(A):=\Gamma(A)\setminus A$. We write $N_G(A,B)=N_G(A)\cap B$ and for a vertex $v$, let $d_G(v,B)=|N_G(v)\cap B|$. 
Throughout the paper, we will often omit the subscript term $G$ to ease the notation, unless otherwise stated.

{For $d,h\in \N$, a $d$\emph{-ary} tree \emph{of height} $h$ is a rooted tree in which every non-leaf vertex has $d$ children and every leaf is of distance exactly $h$ from the root.}
Given a tree $T$, let $T'$ be a subtree obtained by removing all leaves in $T$.
A \emph{pendant star} in $T$ is a maximal star centered at a leaf vertex of $T'$, and the unique neighbour of the center in $T'$ is the \emph{root} of the pendant star (see Figure~\ref{fig1}).
A path $P$ in $T$ is a \emph{bare} path if all internal vertices of $P$ have degree exactly two in $T$.
A \emph{caterpillar} in $T$ consists of a bare path in $T'$ as the \emph{central} path
and a (possibly empty) set of leaves in $T$ attached to internal vertices of the central path, where \emph{branching} vertices are the internal vertices attached with at least one leaf and we call each such leaf a
\emph{leg}. Moreover, the \emph{length} and \emph{ends} of a caterpillar refer to the length and ends of the corresponding central path, respectively. Given a path $P$ and two distinct vertices $a,b\in V(P)$, denote by $P(a,b)$ the subpath that connects $a$ and $b$ in $P$.

Given two graphs $H,G$, an embedding of $H$ in $G$ is an injective mapping $\phi: V(H)\rightarrow V(G)$ such that $\{\phi(u),\phi(v)\}\in E(G)$ for every edge $uv\in E(H)$. We often use $\phi|_H$ to denote an embedding of $H$ (in $G$).

\section{Outline of the proofs}

In this section we give an outline of our proofs, and compare with previous approaches.
Firstly, what is common for previous proofs~\cite{JKS12, Kri10, Montgomery19} is to distinguish the trees according to the number of leaves.
The following key observation is due to Krivelevich~\cite{Kri10}.
\begin{lemma}\cite{Kri10}
\label{lem:fact}
For any integers $n, k > 2$, an $n$-vertex tree either has at least $\frac{n}{4k}$
leaves or a collection of at least $\frac{n}{4k}$ vertex disjoint bare paths each of length $k$.
\end{lemma}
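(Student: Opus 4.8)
The plan is to establish the dichotomy by assuming our $n$-vertex tree $T$ has fewer than $\frac{n}{4k}$ leaves and then extracting $\frac{n}{4k}$ pairwise vertex-disjoint bare paths of length $k$ from the ``degree-two part'' of $T$. The starting point is the tree identity $\sum_{v\in V(T)}(d_T(v)-2)=-2$: if $T$ has $\ell$ leaves, this forces the number $b$ of \emph{branch vertices} (those of degree at least $3$) to satisfy $b\le \ell-2$, so under the hypothesis $\ell<\frac{n}{4k}$ we also get $b<\frac{n}{4k}$. (Note that since any tree on more than one vertex has $\ell\ge 2$, this case only arises when $n>8k$, so $n$ is automatically large compared to $k$ and crude estimates will suffice.)

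Next I would delete the set $B$ of all $b$ branch vertices and study the forest $F:=T-B$. Since $V(F)=V(T)\setminus B$ contains no vertex of $T$-degree at least $3$, each component of $F$ is a path, and every internal vertex of such a path has two neighbours in $V(F)$ and hence degree exactly $2$ in $T$; that is, each component of $F$ is a bare path of $T$. I need two quantitative facts about $F$. First, $F$ spans $|V(F)|=n-b>n(1-\tfrac1{4k})$ vertices. Second, since $F$ is a forest, its number of components equals $|V(F)|-|E(F)|\le (n-b)-\bigl((n-1)-\sum_{v\in B}d_T(v)\bigr)=1+\sum_{v\in B}(d_T(v)-1)$, and $\sum_{v\in B}(d_T(v)-1)=(\ell-2)+b\le 2\ell-4$, so $F$ has at most $2\ell-3<\frac{n}{2k}$ components.

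For the extraction, from a component path on $p_i$ vertices I would cut off consecutive disjoint blocks of $k+1$ vertices, obtaining $\lfloor p_i/(k+1)\rfloor$ subpaths, each of length exactly $k$ and each a bare path of $T$ (its internal vertices again have two neighbours in $V(F)$, hence $T$-degree $2$). Blocks from the same component are disjoint by construction, and blocks from different components are disjoint because the components of $F$ are. Writing $m<\frac{n}{2k}$ for the number of components and using $\lfloor p_i/(k+1)\rfloor\ge\frac{p_i-k}{k+1}$, the number of bare paths obtained is at least $\frac{\sum_i p_i-mk}{k+1}=\frac{(n-b)-mk}{k+1}>\frac{n(1-\frac1{4k})-\frac n2}{k+1}=\frac{n(2k-1)}{4k(k+1)}\ge\frac{n}{4k}$, the last step using $k\ge 3$. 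This gives the second alternative.

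The argument is elementary and I do not foresee a real obstacle; the single point that needs care is to keep the produced paths simultaneously \emph{bare} and \emph{vertex}-disjoint. Removing all branch vertices at the outset is precisely what makes the surviving degree-two structure genuinely bare, and slicing each component into blocks of $k+1$ (not $k$) vertices is what forces vertex-disjointness, at the modest price of $\lfloor p_i/(k+1)\rfloor$ in place of $\lfloor p_i/k\rfloor$; the factor $\frac14$ in the statement is just loose enough to swallow this loss for every $k>2$.
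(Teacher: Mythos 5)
Your proof is correct. Since the paper merely cites this lemma to Krivelevich without reproducing a proof, there is nothing in-paper to compare against; your argument---bounding the number of branch vertices by the number of leaves via the degree-sum identity, deleting the branch vertices to leave a path-forest with at most $2\ell-3<\frac{n}{2k}$ components, and slicing each component into consecutive $(k+1)$-vertex blocks---is the standard proof of this statement, and all the estimates check (including $\lfloor p_i/(k+1)\rfloor\ge(p_i-k)/(k+1)$, valid since $p_i\bmod(k+1)\le k$, and $\frac{n(2k-1)}{4k(k+1)}\ge\frac{n}{4k}$, which already holds for $k\ge 2$).
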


Based on this fact, previous approaches distinguish the trees in $\mathcal{T}(n, \Delta)$ into two classes.
If a tree $T$ has many leaves, then we remove these leaves and obtain a subtree $T'$.
The subtree $T'$ can be embedded in the host graph by almost-spanning tree embedding results (e.g. Theorem~\ref{almost}), and then the leaves of $T$ can be further embedded to the remaining vertices by matching-based arguments.

We first outline the proof of Theorem~\ref{thm:JKS} in~\cite{JKS12}.
The proof of Theorem~\ref{thm:JKS} uses another version of Lemma~\ref{lem:fact}, that is, $T$ either has many leaves, or has \emph{one} long bare path.
The authors of~\cite{JKS12} actually went one step further, namely, they further distinguished the \emph{many-leave} case into two cases depending on $T'$ -- either $T'$ has a long bare path, or $T'$ has many leaves.
Then the three cases are solved by different methods.
In particular, they used a result on Hamiltonian-connectedness to embed long bare path (after embedding $T'$).

Montgomery's proof of Theorem~\ref{thm:RM19} in~\cite{Montgomery19} is far more sophisticated, when dealing with the \emph{many-bare-path} case.
In fact, for random graphs, in~\cite{Alon07} Alon Krivelevich and Sudakov already observed that one can obtain spanning tree embeddings for trees with many leaves, by first applying their result on almost spanning trees and then using a Hall-type matching argument to embed the leaves of $T$.
This is easily done by the \emph{multi-round exposure} technique, as one can reveal new random edges.
However, the case when $T$ has many (long) bare paths is significantly more challenging.
Montgomery developed a novel method, \emph{absorption using bipartite template}, to complete the embedding of a disjoint union of (bare) paths, which we shall use as well in our proof.
This new method has found many applications in embedding problems in sparse graphs and hypergraphs.

Now we discuss briefly on our proof ideas.
In fact, for the many-bare-path case we could follow the embedding strategy (and some of the results) of Montgomery.
However, since we do not work with random graphs, the many-leave case is no longer free (modulo the almost spanning tree embedding results), and in fact becomes challenging.
Indeed, using a result of~\cite{JKS12}, one can partition the graph $G$ into multiple blocks while reserving the expansion property in each block (i.e., $G$ expands into each block).
Thus, letting $C$ be the leaves of $T$ and $B$ be the set of the parents of $C$, one can embed $T-C$ by an embedding $\phi$ in a (huge) block of $G$ so that what is left for the image of $C$ is a block $V'$ with good expansion property (namely, $G$ expands into $V'$).
However, this ``one-sided'' expansion is too weak for us to establish a star-matching result on the bipartite graph $G[\phi(B), V']$.
The hope is to strengthen the embedding $\phi$ so that $\phi(B)$ also enjoys a good expansion property and thus the matching-type result can be applied to $G[\phi(B), V']$.
To achieve this, it is natural to check $T'$ similarly as in~\cite{JKS12} and split into further cases.

\begin{corollary}[first-round deletion]\label{newdiv}
For any integers $n, \Delta, k > 2$, an $n$-vertex tree $T\in \mathcal{T}(n,\Delta)$ either has at least $\frac{n}{4k\De}$
pendant stars or a collection of at least $\frac{n}{4k\De}$ vertex disjoint caterpillars each of length $k$.
\end{corollary}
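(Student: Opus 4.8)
The plan is to derive Corollary~\ref{newdiv} from Lemma~\ref{lem:fact} by a ``two-level'' application: one for the leaves of $T$, and then a second one for the tree $T'$ obtained by deleting all leaves of $T$. First I would apply Lemma~\ref{lem:fact} to $T$ with the given parameter $k$. This yields one of two alternatives: either $T$ has at least $\frac{n}{4k}$ leaves, or $T$ has a collection of at least $\frac{n}{4k}$ vertex-disjoint bare paths each of length $k$.

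In the second alternative, a bare path of length $k$ in $T$ is already (the central path of) a caterpillar of length $k$ with no legs, so we immediately get at least $\frac{n}{4k}\ge\frac{n}{4k\De}$ vertex-disjoint caterpillars of length $k$ and are done. So the work is entirely in the first alternative, where $T$ has a set $L$ of at least $\frac{n}{4k}$ leaves. Here I would pass to $T'=T-L$. Each leaf of $T$ has a unique neighbour in $T'$ (its parent), and since $\De(T)\le\De$, each vertex of $T'$ is the parent of at most $\De-1<\De$ leaves; hence the number of \emph{parent vertices}, i.e. vertices of $T'$ that are roots of pendant stars, is at least $|L|/\De\ge\frac{n}{4k\De}$. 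If these pendant stars were automatically vertex-disjoint we would already have the first outcome of the corollary, but their roots could be adjacent in $T'$; the cleaner route is to instead apply Lemma~\ref{lem:fact} to $T'$ itself (note $|V(T')|\ge$ the number of pendant-star roots, and one checks $T'$ is still large enough, or simply run the lemma with $n$ replaced by $|V(T')|$ and absorb the constant). This gives: either $T'$ has at least $\frac{|V(T')|}{4k}$ leaves, or $T'$ has at least $\frac{|V(T')|}{4k}$ vertex-disjoint bare paths of length $k$. A leaf of $T'$ is precisely the root of a pendant star of $T$, so in the first sub-case we get many vertex-disjoint pendant stars of $T$ (disjointness of the stars follows because distinct leaves of $T'$ have disjoint neighbourhoods \emph{in $T$} up to the shared structure — more carefully, take an independent subset of the leaves of $T'$ of linear size, or note that a pendant star is determined by its root and two pendant stars with distinct roots that are non-adjacent in $T'$ are vertex-disjoint). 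In the second sub-case, each bare path $P$ of $T'$ of length $k$ together with all legs of $T$ attached to its internal vertices is a caterpillar of $T$ of length $k$; distinct such $P$ being vertex-disjoint in $T'$, and legs being leaves of $T$ hanging off internal vertices, the corresponding caterpillars are vertex-disjoint in $T$.

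The one genuinely delicate point — and the step I expect to be the main obstacle — is bookkeeping the constants and the disjointness in the pendant-star case: when we extract leaves of $T'$ we must ensure the resulting pendant stars of $T$ are pairwise vertex-disjoint, which forces us to either select an independent set among the parent vertices (losing a further factor that must be shown to be absorbed by the $\De$ in the denominator) or to argue directly that Lemma~\ref{lem:fact} applied to $T'$ can be strengthened to give disjoint pendant stars rather than merely disjoint vertices/leaves. I would handle this by choosing the parameter carefully: apply Lemma~\ref{lem:fact} to $T$ with parameter $k$ to separate the leaf-rich case, and in that case observe $T'$ has $\ge |L|/\De$ pendant-star roots; then rather than re-invoking the lemma, directly partition these roots into those lying on long bare paths of $T'$ versus those that are ``locally dense'' in $T'$, mirroring exactly the dichotomy in Lemma~\ref{lem:fact}. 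The factor $\De$ in the denominator $\frac{n}{4k\De}$ is exactly the slack needed to pass from ``$\frac{n}{4k}$ leaves of $T$'' to ``$\frac{n}{4k\De}$ pendant-star roots'', and no further loss is incurred because a bare path of $T'$ is already a caterpillar of $T$ of the same length. The remaining verifications — that caterpillar length equals central-path length, that legs do not interfere with disjointness, that $|V(T')|$ is large enough to run the lemma — are routine.
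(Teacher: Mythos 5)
Your plan misidentifies where the single application of Lemma~\ref{lem:fact} should go: it should be applied to $T'$, not to $T$. The preliminary application to $T$ creates a genuine gap in the bare-path branch. A bare path $P$ of length $k$ in $T$ is \emph{not} automatically the central path of a caterpillar of length $k$, because the paper's definition of a caterpillar requires the central path to be a bare path of $T'$, and $P$ is a subgraph of $T'$ only when neither of its endpoints is a leaf of $T$. Lemma~\ref{lem:fact} gives no control over the endpoints, and in general many (or all) of the extracted bare paths of $T$ can terminate at leaves, so that passing to $T'$ shortens them to length $k-1$ or $k-2$. One could patch this by invoking the lemma with parameter $k+2$ and trimming, using $\tfrac{n}{4(k+2)}\ge\tfrac{n}{4k\De}$ for $\De\ge 2$, but this is an unneeded detour. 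There is also a terminological slip: a pendant star is \emph{centered} at a leaf of $T'$, and its \emph{root} is that leaf's unique $T'$-neighbour; your ``parent vertices'' (vertices of $T'$ with leaves of $T$ attached) need be neither centers nor roots of pendant stars, since an internal vertex of $T'$ can carry leaves of $T$ without yielding any pendant star, so the count $|L|/\De$ does not directly bound the number of pendant stars.

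The intended derivation is a single application of Lemma~\ref{lem:fact} to $T'$ with the given $k$. Every vertex of $T'$ keeps at least one $T'$-neighbour, hence is adjacent to at most $\De-1$ leaves of $T$; since each leaf has a unique neighbour, $n\le\De|V(T')|$, i.e.\ $|V(T')|\ge n/\De$. Lemma~\ref{lem:fact} then yields either at least $\tfrac{|V(T')|}{4k}\ge\tfrac{n}{4k\De}$ leaves of $T'$ — each the center of a distinct pendant star of $T$, since a leaf of $T'$ has $T$-degree at least two and hence at least one pendant leaf of $T$ attached — or at least $\tfrac{n}{4k\De}$ vertex-disjoint bare paths of $T'$ of length $k$, each of which, together with the leaves of $T$ hanging from its internal vertices, is a caterpillar of $T$ of length $k$; these caterpillars are pairwise vertex-disjoint because the central paths are disjoint and each leaf of $T$ has a unique neighbour. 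Your intuition that the factor $\De$ in the denominator absorbs exactly the loss from $T$ to $T'$ is correct, but the loss arises symmetrically in both alternatives, which is precisely why the lemma must be applied to $T'$ from the outset rather than in a second pass.
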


\begin{figure}[htb]
\center{\includegraphics[width=10.4cm] {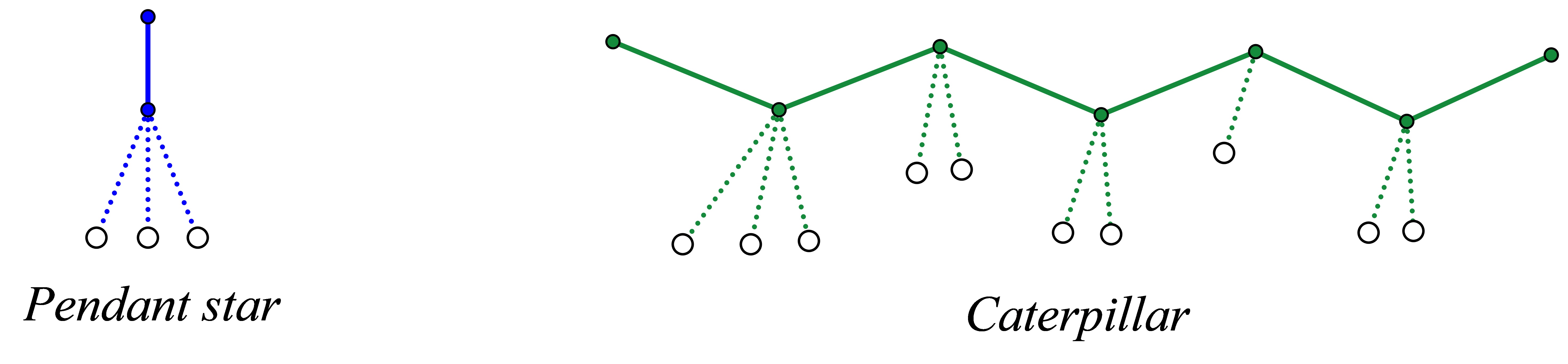}}
\caption{}
\label{fig1}
\end{figure}

This actually gives threes cases (recall that $T'$ denotes the subtree obtained by removing all leaves of $T$):
\begin{enumerate}
\item the many-bare-path case ($T$ has many bare paths),
\item the many-pendant-star case (both $T$ and $T'$ have many leaves), and
\item the many-caterpillar case ($T$ has many leaves but $T'$ does not).
\end{enumerate}
As mentioned above, (1) can be treated by the method of Montgomery, while (2) and (3) need new ideas.
The novel part of our approach lies in the construction of the almost spanning tree, so that roughly speaking (a large set of) the leaves of $T'$ can be embedded to a block enjoying expansion property (and then a matching-type argument can embed the leaves of $T$ to the remaining vertices of $G$).
Let $B'$ be the set of the leaves of $T'$ and $A$ be the set of the parents of $B'$.
Then choose $B\subseteq B'$ such that $|B|=|A|$ and vertices of $B$ have distinct parents, namely, $T[A\cup B]$ is a matching. Let $C$ be the set of leaves incident to $B$.
We start with a random partition of $V(G)$ into three blocks $V_1,V_2, V_3$, of size
\[
|V_1|=n-|B|-|C| + 22\Delta m, \quad |V_2|=|B| \quad\text{ and }\quad |V_3|=|C|-22\Delta m,
\]
where $m=n/2d$ (recall that every $(n,d)$-expander is $m$-joined).
We first find an embedding $\phi$ that maps $T'=T-(B\cup C)$ to $V_1$ by Theorem~\ref{almost}, with a leftover of $22\Delta m$ vertices.
Next we try to match $\phi(A)$ and $V_2$, which, if successful, would embed $B$ to $V_2$ which has good expansion property.
However, this is asking for a \emph{perfect matching} between $\phi(A)$ and $V_2$, and as we only have one-sided expansion property, this may not be possible (actually, this is by the same reason why we cannot finish the final star-matching mentioned before).
Now, a quick solution is to enlarge $V_2$ by part of $V_3$, so that the desired matching only covers $\phi(A)$ and is not perfect.
More precisely, let $V_2'$ be obtained from adding $m$ vertices of $V_3$ to $V_2$.
Since $G$ is $m$-joined, it is easy to find a matching in $G[\phi(A), V_2']$ that covers $\phi(A)$.
Now what we want is that the expansion property from $V_3$ to $V_2$ is not too much damaged, which can be guaranteed if $m = o(\frac dn|B|)$.
This strategy leads to a condition $d = \Theta(\De\sqrt{n})$.
Indeed, this (with some technical work) is enough for establishing Theorem~\ref{th2}.

To prove Theorem~\ref{th1+}, we need a more complex approach.
Note that we can find a maximum matching $M$ in $\phi(A)$ and $V_2$.
It may not be perfect, but by the $m$-joinedness, there are less than $m$ unmatched vertices on both sides.
Then a na\"ive idea is to replace the vertices in $\phi(A)$ as follows.
For $v\in \phi(A)\setminus V(M)$, we search for a path of length two from its parent to $V_2\setminus V(M)$, whose internal vertex is from $V_3$.
This path would be almost impossible to find -- because of its short length.
What we actually do is to try to find a much longer path (indeed, of length $h:=\sqrt{\log n}$).
To make sure the distance of $v$ and its ancestor is large enough, at the beginning of the proof, we apply Lemma~\ref{lem:fact} to $T_h$, a tree obtained from $T$ by recursively removing all leaves $h$ times.
First assume that $T_h$ has many leaves.
We redefine $B$ and $C$ such that vertices of $B$ have distinct ancestors with distance $h$, and denote the set of their ancestors by $A_h$, and $C$ is defined as the leaves of $T$ in $N(B)$.
Then the first step is to embed $T-(B\cup C)$ in a block of $G$.
When embedding $B$, we first find a maximum matching $M$ between $\phi(A)$ and $V_2$.
Set $A_h'\subseteq A_h$ be the set of ancestors of vertices in $\phi(A)\setminus V(M)$ and we try to connect vertices of $A_h'$ and $V_2\setminus V(M)$, by vertex-disjoint paths of length $h-1$, using part of vertices of $V_3$ (reserved for this connection).
After the connection is done, we also need to rebuild the pendant trees rooted at these paths (they were embedded by $\phi$ but just discarded, see Figure~\ref{fig2}).
After the reconstruction of the pendant trees, $B$ is completely embedded to $V_2$.
Now a matching-type argument finishes the embedding of $T$.
Second assume that $T_h$ has many bare paths.
If e.g. half of these paths are also bare paths in $T$, then we can follow Montgomery's approach.
Otherwise, half of these paths have pendant trees rooted at their internal vertices.
There are two further subcases: i) a quarter of the paths have the pendant trees have height at least two; ii) a quarter of the paths are caterpillars.
For i), we shall first embed the subtree obtained from $T$ excluding pendant stars (one star for each path).
For ii), we first embed the subtree of $T$ with those caterpillars removed.
In both cases, to finish the embedding, a similar (and slightly more complicated) reconstruction scheme works so that we can embed the centers of the stars or the roots of the ``first'' legs of all caterpillars to a block with expansion property, leaving the final star-matching possible to achieve.

\section{Preliminaries}
In this section we collect various tools which have been used in previous works~\cite{Haxell01,JKS12,Kri10,Montgomery19} on embedding (almost) spanning trees, and also present some new results to aid our embedding.

\subsection{Random partition of expanders}
The following result of Johanssen, Krivelevich and Samotij\\~\cite{JKS12} allows us to partition our expander into small expanders. For our convenience, we give a slightly stronger version of their result, whose proof identically follows from that of Lemma~3.4 in \cite{JKS12}.
\begin{lemma}\cite[Lemma 3.4]{JKS12}\label{randompart}
There exists an absolute constant $n_0\in \N$ such that the following statement holds. Let $k,n\in \N$ and $d\in \mathbb{R}^+$ satisfy $n\geq n_0$ and $k\leq \log n$. Furthermore, let $n,n_1,\ldots,n_k\in \N$ satisfy $n=n_1+\ldots+n_k$ and let $d_i:=\frac{n_i}{5n}d$ satisfy $d_i\geq 2\log n$ for all $i\in \{1,\ldots,k\}$. Then, for any graph $G$ which $d$-expands into a vertex set $W$, with $|W|=n$, the set $W$ can be partitioned into $k$ parts $W_1,\ldots, W_k$ of sizes $n_1,\ldots,n_k$ respectively, such that $G$ $d_i$-expands into $W_i$ for every $i\in[k]$.
\end{lemma}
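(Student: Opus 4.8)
The plan is to adapt the proof of Lemma~3.4 in~\cite{JKS12} essentially verbatim, performing the partition by a random process and then verifying the two expansion conditions by concentration. More precisely, I would assign each vertex $w\in W$ independently to part $W_i$ with probability $n_i/n$, and then correct the resulting (approximately correct) part sizes to the exact values $n_1,\dots,n_k$ by moving a negligible number of vertices; since $n_i = 5 d_i n/d$ and $d_i\ge 2\log n$, each part has size $\Omega(n\log n/d)$, so the Chernoff bound makes the initial sizes within, say, a $(1\pm n^{-1/3})$ factor of their targets with probability $1-o(1/n)$, and the correction step perturbs neighbourhoods by at most $o(|W_i|/d)$ vertices, which will be absorbable into the slack.

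The substantive point is condition~(i): for each $i$ and each $X\subseteq V(G)$ with $1\le |X| < \lceil |W_i|/(2d_i)\rceil$, I want $|N_G(X,W_i)|\ge d_i|X|$. Here I would use that $G$ $d$-expands into $W$, so $|N_G(X,W)|\ge d|X|$ for all $X$ with $|X| < \lceil |W|/(2d)\rceil = \lceil n/(2d)\rceil$; note $\lceil n/(2d)\rceil \ge \lceil |W_i|/(2d_i)\rceil$ since $n_i/(5n)\cdot d = d_i$ forces $|W_i|/d_i = 5n/d$, so the relevant range of $|X|$ for part $i$ is contained in the range where $G$ expands into all of $W$. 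Fix such an $X$ and let $N := N_G(X,W)$, so $|N|\ge d|X|$. Each vertex of $N$ lands in $W_i$ independently with probability $n_i/n$, so $\mathbb{E}|N_G(X,W_i)\cap N| \ge \frac{n_i}{n} d|X| = 5 d_i |X|$. A Chernoff bound gives $|N_G(X,W_i)| \ge \frac{9}{2} d_i |X|$ except with probability $\exp(-\Omega(d_i|X|))$. Now take a union bound over all $X$: the number of sets $X$ with $|X| = s$ is at most $n^s = \exp(s\log n)$, and since $d_i\ge 2\log n$ the failure probability $\exp(-\Omega(d_i s))$ beats $\exp(s\log n)$ with room to spare, so summing over $s$ from $1$ to $\lceil |W_i|/(2d_i)\rceil$ and over the $k\le \log n$ parts still leaves total failure probability $o(1/n)$. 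The factor-of-two slack between $\frac92 d_i$ and $d_i$ then absorbs the losses from the size-correction step.

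For condition~(ii) I would argue similarly but more crudely: for disjoint $X,Y$ with $|X|=|Y| = \lceil |W_i|/(2d_i)\rceil =: m_i$, I first note that by condition~(i) applied inside the argument (or directly by $d$-expansion of $G$ into $W$), the sets $X$ and $Y$ each have neighbourhood of size $\Omega(d_i m_i)$ inside $W$; partitioning further, one shows that with high probability $X$ has $\ge \frac{n}{2d}$ neighbours in $W_i$ and similarly for $Y$, and then the global $\frac{n}{2d}$-joinedness of $G$ (condition~(ii) of $G$ $d$-expanding into $W$) forces an edge. Again a union bound over the $\le n^{2m_i}$ choices of $(X,Y)$ against a failure probability of the form $\exp(-\Omega(d_i m_i)) = \exp(-\Omega(n/d))$ closes, using $m_i\log n \ll d_i m_i$. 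The main obstacle, as usual in these arguments, is bookkeeping the interplay between the size-correction step and the two expansion bounds — ensuring the moved vertices are few enough that the $\frac92 d_i$ versus $d_i$ (resp.\ the $\Omega(n/d)$ versus $n/(2d)$) gaps survive — but this is routine given the generous slack that $d_i\ge 2\log n$ provides, and is exactly what is done in~\cite{JKS12}; the only change in the "slightly stronger version" is that we track expansion into a prescribed $W\subseteq V(G)$ rather than $W = V(G)$, which costs nothing since $G$'s edges to $W$ are all we ever use.
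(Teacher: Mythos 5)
Your overall approach --- random assignment of vertices of $W$ to the parts, correction of sizes, Chernoff, union bound --- is the standard one and presumably what \cite{JKS12} does. However, there is a concrete error in your treatment of condition~(i) that leaves a genuine gap, and the error is not incidental: it is precisely the point that the factor $5$ in $d_i=\frac{n_i}{5n}d$ is designed to handle.

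You assert that $\lceil n/(2d)\rceil \ge \lceil |W_i|/(2d_i)\rceil$. The inequality goes the other way. Since $d_i = n_i d/(5n)$ we get $|W_i|/(2d_i) = 5n/(2d)$, so $\lceil |W_i|/(2d_i)\rceil$ is roughly \emph{five times} $\lceil n/(2d)\rceil$, and the range of $|X|$ for which you must prove $|N_G(X,W_i)|\ge d_i|X|$ is not contained in the range where the hypothesis supplies $|N_G(X,W)|\ge d|X|$. Your Chernoff/union-bound argument as written therefore only covers $|X|<\lceil n/(2d)\rceil$; for the remaining range $\lceil n/(2d)\rceil \le |X| < \lceil 5n/(2d)\rceil$ you have given no lower bound on $|N_G(X,W)|$, hence no expected value to concentrate around. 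The fix is to switch, in that range, from the degree condition~(i) of the hypothesis to the joinedness condition~(ii): writing $m:=\lceil n/(2d)\rceil$, for any $X$ with $|X|\ge m$ the set $W\setminus(N_G(X,W)\cup X)$ must have fewer than $m$ vertices (else it and a subset of $X$ of size $m$ would be two disjoint $m$-sets with no edge between them), so $|N_G(X,W)| > |W|-|X|-m \ge n(1-O(1/\log n))$ for $|X|<5n/(2d)$ and $d_i\ge 2\log n$. The expected size of $N_G(X,W_i)$ is then at least $n_i(1-O(1/\log n))$, while the target $d_i|X|$ is at most about $n_i/2$ in this range, leaving room for Chernoff; the union bound over $\exp(s\log n)$ sets of size $s\le 5n/(2d)$ against a failure probability $\exp(-\Omega(n_i))=\exp(-\Omega(10n\log n/d))$ still closes. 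Without this second regime your proof does not establish expansion into $W_i$.

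A secondary remark: your argument for condition~(ii) of the new expansion is not only overcomplicated but, as written, does not yield the conclusion. You derive that $N_G(X,W_i)$ and $N_G(Y,W_i)$ are both of size $\ge n/(2d)$ and then invoke joinedness, but an edge between $N_G(X,W_i)$ and $N_G(Y,W_i)$ is not an edge between $X$ and $Y$. In fact condition~(ii) is entirely deterministic here and needs no randomness: $m_i:=\lceil|W_i|/(2d_i)\rceil=\lceil 5n/(2d)\rceil\ge m$, so any two disjoint sets of size $m_i$ contain disjoint subsets of size $m$, and the joinedness from $G$ $d$-expanding into $W$ applies directly.
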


\subsection{Building star-matchings}
We will use two results on matchings and here we first give a notion of generalized matching as follows.
\begin{definition}
Given a bipartite graph $G=(A,B,E)$ and an function $f: A\rightarrow \N$ with $\sum_{u\in A}f(u) = |B|$, an $f$-\emph{matching  from} $A$ \emph{into} $B$ is a collection of vertex-disjoint stars $\{S_u:u\in A\}$ in $G$ such that $S_u$ has $u$ as the center and exactly $f(u)$ leaves inside $B$.
\end{definition}
More often we call such an $f$-matching a star-matching. The following two results give sufficient conditions on the existence of star-matchings.
\begin{lemma} \cite[Lemma~3.10]{JKS12}\label{star}
Let $d, m\in \mathbb{N}$ and $G$ be a graph. Suppose that two disjoint
sets $A, B \subseteq V(G)$ satisfy the following conditions:
\begin{enumerate}
  \item [$(1)$] $|N_G(X)\cap B| \ge d|X|$ for all $X \subseteq A$ with $1\le |X| \le m$;
  \item [$(2)$] $e_G(X, Y) > 0$ for all $X\subseteq A, Y\subseteq B$ with $|X| = |Y |=m$.
  \item [$(3)$] $|N_G(w) \cap A| \ge m$ for all $w \in B$.
\end{enumerate}
Then, for every $f: A\rightarrow \{1, \ldots, d\}$ with $\sum_{u\in A}f(u) = |B|$, there exists an $f$-matching from $A$ into $B$.
\end{lemma}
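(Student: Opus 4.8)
The plan is to reduce the existence of an $f$-matching to a single application of Hall's theorem on an auxiliary bipartite graph. First I would build a bipartite graph $H$ with parts $A^{\ast}$ and $B$: obtain $A^{\ast}$ from $A$ by replacing each $u\in A$ with $f(u)$ clones $u^{(1)},\dots,u^{(f(u))}$, and join each clone $u^{(i)}$ in $H$ to every vertex of $N_G(u)\cap B$. Since $\sum_{u\in A}f(u)=|B|$, the two sides of $H$ have equal size, so any matching of $H$ that saturates $A^{\ast}$ is automatically perfect; and a perfect matching of $H$, read back in $G$, is exactly an $f$-matching from $A$ into $B$ (the clones of $u$ together with their matched vertices form a star $S_u$ centered at $u$ with $f(u)$ leaves in $B$, and these stars are pairwise vertex-disjoint because the matching is). Thus it suffices to verify Hall's condition for $A^{\ast}$.

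Suppose some $S\subseteq A^{\ast}$ violates Hall's condition, i.e. $|N_H(S)|<|S|$. Adjoining to $S$ all clones of every vertex it already meets leaves $N_H(S)$ unchanged while not decreasing $|S|$, so we may assume $S=\bigcup_{u\in X}\{u^{(1)},\dots,u^{(f(u))}\}$ for some $X\subseteq A$; then $|S|=\sum_{u\in X}f(u)$ and $N_H(S)=N_G(X)\cap B$. Writing $W:=N_G(X)\cap B$, the assumed violation reads $|W|<\sum_{u\in X}f(u)\le d|X|$, the last inequality because $f\le d$. If $1\le|X|\le m$, then condition $(1)$ gives $|W|\ge d|X|$, a contradiction, so we may assume $|X|>m$.

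For the case $|X|>m$ I would look at $Y:=B\setminus W$, the set of vertices of $B$ having no neighbour in $X$. If $|Y|\ge m$, pick $X_0\subseteq X$ and $Y_0\subseteq Y$ with $|X_0|=|Y_0|=m$; condition $(2)$ forces an edge between $X_0$ and $Y_0$, contradicting $Y_0\subseteq Y$. Hence $|Y|<m$, i.e. $|W|>|B|-m$. On the other hand, if $Y\ne\emptyset$, pick $w\in Y$; by condition $(3)$, $w$ has at least $m$ neighbours in $A$, all of them in $A\setminus X$, so $|A\setminus X|\ge m$, and therefore $\sum_{u\in X}f(u)=|B|-\sum_{u\in A\setminus X}f(u)\le|B|-|A\setminus X|\le|B|-m<|W|$, contradicting the violation; while if $Y=\emptyset$ then $W=B$ and $|W|=|B|\ge\sum_{u\in X}f(u)$, again a contradiction. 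So Hall's condition holds for $A^{\ast}$, $H$ has a perfect matching, and the $f$-matching exists.

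The argument is short, and the only delicate point is the large-$|X|$ case: condition $(1)$ alone is useless once $|X|$ exceeds $m$, and one has to play condition $(2)$ (which bounds $|B\setminus N_G(X)|$ from above) against condition $(3)$ together with $f\ge 1$ (which bounds $\sum_{u\in A\setminus X}f(u)$, hence $\sum_{u\in X}f(u)$, from above). It is precisely the interplay of these two bounds that pins $\sum_{u\in X}f(u)\le|B|-m<|W|$ and closes the gap. Everything else is routine clone-and-apply-Hall bookkeeping.
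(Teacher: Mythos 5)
Your argument is correct. The paper itself does not reprove Lemma~\ref{star} (it is imported from \cite{JKS12}), but it does supply a proof of the companion Lemma~\ref{star1}, and your proof follows the same skeleton: verify a generalized (fractional/deficiency) Hall condition $|N_G(X)\cap B|\ge\sum_{v\in X}f(v)$ for all $X\subseteq A$, splitting on the size of $X$. Your clone-and-apply-ordinary-Hall device is just the textbook way to derive that generalized condition, so there is no substantive difference there; what genuinely differs is the treatment of the large-$|X|$ case, and that difference is forced by the different hypotheses. In the paper's Lemma~\ref{star1} the third condition is a set-expansion hypothesis on small subsets of $B$ ($|N_G(Y)\cap A|\ge d|Y|$ for $|Y|\le m$), which they use on $T=B\setminus N(X)$ to get $d|T|\le|A\setminus X|$; in your setting the third condition is a pointwise degree bound ($|N_G(w)\cap A|\ge m$ for each $w\in B$), and you exploit it by picking a single witness $w\in B\setminus N_G(X)$ to conclude $|A\setminus X|\ge m$, which combined with $f\ge 1$ and condition $(2)$ pins $\sum_{u\in X}f(u)\le|B|-m<|N_G(X)\cap B|$. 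Both routes land on the same estimate, and the step-by-step logic of yours (closure to a union of full clone classes, the $|Y|<m$ bound from $m$-joinedness, the $Y=\emptyset$ and $Y\ne\emptyset$ subcases) checks out.
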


\begin{lemma}\label{star1}
Let $d,m\in \mathbb{N}$ and $G$ be a graph. Suppose that two disjoint
sets $A, B \subseteq V(G)$ satisfy that
\begin{enumerate}
  \item [$(1)$] $|N_G(X)\cap B| \ge d|X|, N_G(Y)\cap A| \ge d|Y|$ for all $X \subseteq A, Y\<B$ with $1\le |X|,|Y| \le m$;
  \item [$(2)$]\label{equ1} $e_G(X, Y) > 0$ for all $X\subseteq A, Y\subseteq B$ with $|X| = |Y |=m$
\end{enumerate}
Then, for every $f: A\rightarrow \{1, \ldots, d\}$ with $\sum_{u\in A}f(u) = |B|$, there exists an $f$-matching from $A$ into $B$.
\end{lemma}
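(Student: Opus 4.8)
The plan is to deduce the existence of the $f$-matching from a single application of Hall's theorem. Form an auxiliary bipartite graph $H$: its left class consists of $f(u)$ clones of each $u\in A$, its right class is $B$, and a clone of $u$ is joined to $w\in B$ exactly when $uw\in E(G)$. Since $\sum_{u\in A}f(u)=|B|$, both classes of $H$ have size $|B|$, so a matching of $H$ saturating $B$ is automatically perfect; such a matching assigns to each $u$ a set of $f(u)$ distinct neighbours in $B$, i.e.\ a star $S_u$, and these stars are vertex-disjoint and partition $B$, giving the desired $f$-matching. Thus it suffices to check Hall's condition on the $B$-side of $H$, namely
\[
\sum_{u\in N_G(T)\cap A}f(u)\ \ge\ |T|\qquad\text{for every }T\subseteq B .
\]

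For $1\le|T|\le m$ this is immediate from the second inequality in~(1): since $d\ge 1$ and $f\ge 1$, we get $\sum_{u\in N_G(T)\cap A}f(u)\ge |N_G(T)\cap A|\ge d|T|\ge |T|$.

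The substance of the argument is the range $|T|>m$, which I would treat on the complementary sets $A_0:=A\setminus N_G(T)$ and $B_0:=B\setminus T$. By definition no vertex of $A_0$ has a neighbour in $T$, so $N_G(A_0)\cap B\subseteq B_0$; and because $\sum_{u\in A}f(u)=|B|$, the Hall inequality for $T$ is equivalent to $\sum_{u\in A_0}f(u)\le|B_0|$. First, $|A_0|<m$: otherwise choose $X\subseteq A_0$ and $Y\subseteq T$ with $|X|=|Y|=m$, which is possible as $|T|>m$, and~(2) forces an edge between $X$ and $Y$, contradicting $N_G(A_0)\cap T=\emptyset$. If $A_0=\emptyset$ the inequality is trivial; otherwise $1\le|A_0|<m$, and the first inequality in~(1) gives $|B_0|\ge|N_G(A_0)\cap B|\ge d|A_0|\ge\sum_{u\in A_0}f(u)$, using $f\le d$. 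This verifies Hall's condition in all cases and completes the proof.

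The only place that requires any thought is realising that the large-$T$ regime should be rewritten on the complements $A_0,B_0$: once that is done, the $m$-joinedness~(2) caps $|A_0|$ strictly below $m$, which is precisely the window in which the two-sided small-set expansion~(1) is available. This is also exactly where Lemma~\ref{star1} departs from Lemma~\ref{star}: the expansion of $B$-sets into $A$ built into~(1) replaces the minimum-degree hypothesis~(3) used there.
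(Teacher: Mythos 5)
Your proposal is correct, and the underlying engine is the same as the paper's: a Hall-type argument applied to a bipartite auxiliary structure, with the small-set two-sided expansion in hypothesis~(1) handling sets near the boundary and the $m$-joinedness in hypothesis~(2) caging the uncontrolled regime. What differs is the decomposition. The paper verifies the generalized Hall condition on the $A$-side, $|N(X)\cap B|\ge\sum_{v\in X}f(v)$ for all $X\subseteq A$, and splits into three ranges: $|X|\le m$ (one-sided expansion from $A$), $m<|X|\le|A|-m$ (use~(2) to get $|N(X)\cap B|>|B|-m$ and then compare against $\sum_{v\in A\setminus X}f(v)\ge m$), and $|X|>|A|-m$ (look at $T=B\setminus N(X)$, deduce $|T|<m$, and apply expansion from the $B$-side). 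You instead check Hall on the $B$-side of a cloned bipartite graph and need only two cases: $|T|\le m$ uses expansion of $B$-sets directly, and $|T|>m$ is immediately transferred to the complement $A_0=A\setminus N_G(T)$, where~(2) forces $|A_0|<m$ and expansion of $A$-sets, together with $f\le d$, closes the argument. The effect is that your $|T|>m$ case absorbs both of the paper's last two cases in one stroke, whereas the paper's presentation makes explicit the middle window where both hypotheses are in play. Both treatments are valid; yours is marginally more economical, and both make clear why the min-degree condition~(3) of Lemma~\ref{star} can be dropped once the $B$-to-$A$ expansion in~(1) is available.
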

\begin{proof}
To build a desired star-matching, we shall show a generalized Hall's condition (see, e.g., Bollob\'{a}s \cite{Bo98})), that is,
\begin{eqnarray}\label{eq4}
|N(X)\cap B|\ge \sum_{v\in X}f(v)~\text{for every}~X\<A.
\end{eqnarray}
From the assumption that every set $X$ of at most $m$ vertices in $A$ has at least $d|X|$ neighbors in $B$, we know that \eqref{eq4} holds when $|X|\le m$. It remains to consider the following two cases depending on the size of $|X|$:
\begin{itemize}
  \item $m<|X|\le |A|-m$: By the assumption~$(2)$ and $|B|=\sum_{v\in A}f(v)$, we have \[|N(X)\cap B|> |B|-m\ge \sum_{v\in A}f(v)-\sum_{v\in A\setminus X}f(v)=\sum_{v\in X}f(v),\] where the second inequality follows as $m\le |A|-|X|\le\sum_{v\in A\setminus X}f(v)$.
  \item $|A|-m<|X|\le |A|$: Let $T:=B\setminus N(X)$. We may assume that $T\neq\emptyset$ for otherwise we are done. As there is no edge between $X$ and $T$, we obtain that $|T|<m$ and $N(T)\cap A\<A\setminus X$. Then it follows by assumption that $d |T|\le|N(T)\cap A|\le|A\setminus X|\le\sum_{v\in A\setminus X}f(v)$. Thus we have
      \[|N(X)\cap B|=|B|-|T|=\sum_{v\in X}f(v)+\sum_{v\in A\setminus X}f(v)-|T|>\sum_{v\in X}f(v).\]
\end{itemize}
In all cases, condition~\eqref{eq4} holds and this yields a desired star-matching covering $A\cup B$. 
\end{proof}

\subsection{Connecting pairs of vertices in expanding graphs}
The following two results are used to build a family of vertex-disjoint paths each connecting a prescribed pair of vertices whilst covering all the fixed vertices.

\begin{theorem}\cite{Montgomery14}
\label{pathcover}
Let $n$ be sufficiently large and let $\ell\in \N$ satisfy $\ell\geq 10^3\log^2n$ and $\ell|n$. Let a graph $G$ contain $n/\ell$ disjoint vertex pairs $(x_i,y_i)$ and let $W=V(G)\setminus (\cup_i\{x_i,y_i\})$. Suppose $G$ $d$-expands into $W$, where $d=10^{10}\log^4 n/\log\log n$. Then we can cover $G$ with $n/\ell$ disjoint paths $P_i$, each of length $\ell-1$, so that, for each $i$,  $P_i$ is an $(x_i,y_i)$-path.
\end{theorem}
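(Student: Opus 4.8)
The plan is to establish the statement by the absorption method, splitting the task into a robust \emph{connecting lemma} and a reserved \emph{absorbing structure}. Write $m=\lceil |W|/(2d)\rceil$; by the two clauses of Definition~\ref{expands}, every subset of $W$ of fewer than $m$ vertices has at least $d$ times as many neighbours inside $W$, and $G$ is $m$-joined on $W$. A first observation is why one cannot simply be greedy: if one processed the pairs $(x_i,y_i)$ one at a time, building each $P_i$ to its full length immediately, then by the final pair the pool of unused vertices of $W$ would have exactly $\ell-2$ vertices, a set carrying essentially no expansion, so no path of the required form through it would be available. Hence the exact-coverage and exact-length requirements must be decoupled from the expansion arguments, and that is precisely what a reserved absorber achieves.

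For the connecting lemma, the basic step is: given disjoint $x,y\in V(G)$ and a sufficiently large ``available'' set $A\subseteq W$, grow breadth-first neighbourhoods of $x$ and of $y$ inside $A$; the first expansion property multiplies their sizes by $d$ at each step, so after $O(\log n/\log\log n)$ steps each has reached size $\ge m$, and then $m$-joinedness supplies an edge between them, yielding a vertex-disjoint $x$--$y$ path of length $O(\log n/\log\log n)$ through $A$. Because a single $a$--$b$ path of length $t$ can be lengthened to any target in an interval $[\,t+c\log n/\log\log n,\ t+2c\log n/\log\log n\,]$ by replacing one of its edges with a freshly connected detour, and because unions of such intervals cover every sufficiently large integer, one can in fact connect $x$ and $y$ by a path through $A$ of \emph{any} prescribed length that is neither too small nor larger than roughly $|A|$. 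Applying this greedily to the pairs works because, after processing any number of them, only a controlled portion of $W$ has been used, so the remainder still satisfies the two expansion clauses with slightly worse constants --- this is exactly the mechanism of Lemma~\ref{randompart} --- and the next pair can still be connected.

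Before connecting anything, I would reserve a structured part $W_0\subseteq W$: using a random split as in Lemma~\ref{randompart}, partition $W=W_0\cup W_1$ so that $G$ continues (with constant-factor loss) to expand into each of $W_0$ and $W_1$, and then install inside $W_0$ a bounded-degree \emph{template} in Montgomery's sense --- a structure distributed along the skeleton paths as small interchangeable gadgets, one collection of gadgets per path, such that each gadget can be set to cover one extra vertex of $W_0$ or none (changing its path's length by one or zero accordingly), and such that the bipartite incidence graph between gadgets and the extra vertices they can cover admits a perfect matching after the deletion of \emph{any} admissible already-covered set, with prescribed degrees on the gadget side. Such templates exist by a probabilistic construction and embed into $G[W_0]$ since $G$ expands into $W_0$. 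One then runs the greedy connecting-and-padding phase on $W_1$, making every $P_i$ a path of one fixed length $\ell-1-K$ covering $W_1$ exactly --- here $\ell\ge 10^3\log^2 n$ leaves ample room inside each path for the $O(\log n/\log\log n)$ connecting core, the padding, and the $K$ gadgets --- and finally the template property converts the residual set $W_0$ into a choice of $K$ active gadgets per path that simultaneously covers all of $W_0$ and lifts every $P_i$ to length exactly $\ell-1$.

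I expect the main obstacle to be precisely this simultaneous control of exact coverage and exact path length: the expansion arguments are inherently lossy, producing ``a path of roughly the right length avoiding a small set'' rather than an exact spanning family, so the real work is to engineer the reserved template, together with the length/parity bookkeeping, so that these slacks cancel, and to verify that a sufficiently robust absorber really does embed into a graph whose expansion is only polylogarithmic rather than that of a genuine random graph. Once this is set up, checking that the numerical thresholds $\ell\ge 10^3\log^2 n$ and $d=10^{10}\log^4 n/\log\log n$ survive all the greedy rounds and the template embedding is careful but routine accounting.
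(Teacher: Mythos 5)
The paper does not prove this theorem but cites it directly from Montgomery~\cite{Montgomery14}; the closest thing to a proof in the paper is its argument for the analogous Theorem~\ref{pathcover2}, which follows exactly the absorption-with-bipartite-template strategy you sketch. Your proposal correctly identifies all the essential ingredients Montgomery uses (and the authors reuse): a connecting lemma obtained by breadth-first expansion inside $W$ until both balls reach size $m$ followed by $m$-joinedness, the observation that pure greedy fails because the final residue of $W$ carries no expansion, a random split of $W$ via Lemma~\ref{randompart} to preserve expansion into each part, a reserved flexible set together with bounded-degree gadgets (40-fans in the paper, absorbers in your terminology) threaded through the skeleton paths, and a robustly matchable bipartite template (Lemma~\ref{flexiblematching}) that converts any admissible residue into a simultaneous choice of activated gadgets lifting each path to exact length. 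The only cosmetic difference is that you propose building every path short by a common $K$ and absorbing $K$ vertices per path, whereas the paper (and Montgomery) builds only a small designated subfamily of $3r$ paths short by one and places all the slack there; both bookkeeping schemes are viable, but the paper's version localizes the absorption to a single vertex per special path, which makes the gadget-to-vertex incidence graph exactly the bipartite template of Lemma~\ref{flexiblematching} with no further normalization. Your remark that one should pad lengths by replacing an edge with a detour is looser than what the paper actually uses (a $d_1$-ary tree of controlled height plus an attached path, giving an exact prescribed length via Lemma~\ref{connect}), but this is a refinement rather than a gap in the approach.
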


\begin{theorem}[Path-cover]\label{pathcover2}
There exists $C>0$ such that the following holds for any integer $\ell\ge 200$ and sufficiently integer $n$ with $\ell|n$. Suppose an $n$-vertex graph $G$ contains $n/\ell$ disjoint vertex pairs $(x_i,y_i),i\in[n/\ell]$ and let $W=V(G)\setminus (\cup_i\{x_i,y_i\})$. If $G$ $d$-expands into $W$ 
 with $d=C\ell\sqrt{n}$, then we can cover $G$ with $n/\ell$ disjoint paths $P_i$, each of length $\ell-1$, so that, for each $i$,  $P_i$ is an $(x_i,y_i)$-path.
\end{theorem}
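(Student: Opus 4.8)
The plan is to establish this short-path analogue of Montgomery's path-cover theorem (Theorem~\ref{pathcover}) by reworking Montgomery's absorption argument~\cite{Montgomery14,Montgomery19}, replacing the places where it exploits paths of length $\Omega(\log^2 n)$ by short-hop arguments that instead exploit the much stronger expansion $d\asymp\ell\sqrt n$. A clean preliminary reduction disposes of large $\ell$: if $\ell\ge 10^3\log^2 n$, then $d$-expansion into $W$ with $d=C\ell\sqrt n$ already implies $d_0$-expansion into $W$ for the smaller value $d_0=10^{10}\log^4 n/\log\log n$ used in Theorem~\ref{pathcover} — decreasing the expansion parameter only weakens conditions (i)--(ii) of Definition~\ref{expands}, as one verifies in a line using $|W|\ge 0.99 n$ — and $\ell\ge 10^3\log^2 n$ forces $C\ell\sqrt n\ge d_0$ for large $n$, so Theorem~\ref{pathcover} applies directly. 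Hence I may assume $200\le\ell<10^3\log^2 n$; I put $m:=\lceil |W|/2d\rceil=\Theta(\sqrt n/\ell)$ and record that the number $n/\ell$ of pairs vastly exceeds $m$, which is what makes absorption feasible.

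The first step is to reserve an absorbing structure. I would designate a set of $a:=\lceil\sqrt n\log n\rceil$ pairs as \emph{absorbing pairs} (far fewer than $n/\ell$) and, via Lemma~\ref{randompart} with $k=2$, split $W$ into a reserved set $W^{\star}$ with $|W^{\star}|=\Theta(\sqrt n\,\ell\log n)=o(n)$ — large enough for Lemma~\ref{randompart} to apply — and a main arena $W^{\circ}$ with $|W^{\circ}|=(1-o(1))n$, so that $G$ still $\Omega(\ell\sqrt n)$-expands into $W^{\circ}$ and $\Omega(\ell\log n)$-expands into $W^{\star}$. Inside the subgraph induced by $W^{\star}$ together with the endpoints of the absorbing pairs I would then build a Montgomery-style \emph{path absorber}: vertex-disjoint $(x_i,y_i)$-paths of length $\ell-1$ whose interiors lie in $W^{\star}$, assembled on top of a bounded-degree robustly matchable bipartite auxiliary graph, whose auxiliary edges are realised by short connecting subpaths in $G[W^{\star}]$ (available because a ball of radius $\lceil(\ell-1)/2\rceil$ grown in $G$ from any vertex into a region of $\ge m$ vertices has more than $m$ vertices, whence any two such balls meet in an edge). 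The point of this gadget is that for any ``defect'' set $D$ contained in a designated flexible interface $W^{\star}_{\mathrm{flex}}\subseteq W^{\star}$ with $|D|=O(\sqrt n)$, the absorber can be rerouted into $(x_i,y_i)$-paths of the same length $\ell-1$ covering exactly $W^{\star}\cup D$.

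The second step connects the remaining pairs inside $W^{\circ}$. Here I would follow the skeleton of Montgomery's proof of Theorem~\ref{pathcover} — reserve a small sub-arena, connect almost all pairs, adjust path lengths — but carry out every ``connection'' and ``length adjustment'' by short hops rather than inside long bare paths, each time invoking $m$-joinedness on radius-$O(\ell)$ balls, the two-sided star-matching Lemma~\ref{star1} (whose hypotheses are precisely $\Omega(\sqrt n)$-expansion of $\le m$-sets and $e_G(X,Y)>0$ for $|X|=|Y|=m$), and the $\Omega(\ell\sqrt n)$-expansion into $W^{\circ}$. This yields vertex-disjoint $(x_i,y_i)$-paths of length exactly $\ell-1$ for all non-absorbing $i$, covering all of $W^{\circ}$ except a leftover of size $O(m\ell)=O(\sqrt n)$, which I arrange to lie inside $W^{\star}_{\mathrm{flex}}$. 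Applying the absorber to this leftover then completes the absorbing paths, and the union of all the paths is an exact cover of $V(G)$ by $n/\ell$ paths of length $\ell-1$, as required.

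The hard part is the interplay inside the second step: one must run the $O(\ell)$ rounds of short-hop connections and matchings so that each round's Hall-type condition holds with respect to the \emph{correct} set of still-available vertices, the accumulated leftover never exceeds the $O(\sqrt n)$ capacity of the interface and actually lands inside it, and the final rounds terminate exactly at the prescribed endpoints $y_i$ with every path of length precisely $\ell-1$. Unlike the long-path setting, there is no room to amortise the $\sqrt{|X|\,|Y|}$-type error of each step over many steps, which is exactly why the hypothesis must be $d\asymp\ell\sqrt n$ rather than polylogarithmic — the extra $\ell$ coming from expansion into a $1/\ell$-fraction of the vertex set being weaker by a factor $\ell$. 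A secondary technical nuisance is that Montgomery's absorber is normally assembled out of \emph{long} bare paths, so some care is needed to rebuild it from paths of length as small as $199$; again the strong expansion is what substitutes for length.
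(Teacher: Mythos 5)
Your high-level plan (reduce long $\ell$ to Theorem~\ref{pathcover}, build a Montgomery-style absorber on bipartite templates, connect the remaining pairs greedily, then absorb the leftover) is exactly the shape of the paper's argument, and your reduction of the case $\ell\ge 10^3\log^2 n$ is correct. However, the quantitative choices are off by a polynomial factor, and this is not a detail you can fix later — it changes the scale of the whole construction.

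The core problem is your estimate of the leftover. You claim the greedy connection phase leaves a set of size $O(m\ell)=O(\sqrt n)$ landing in a flexible interface, and you size the absorber at $a=\lceil\sqrt n\log n\rceil$ pairs accordingly. But the connection tool here is Lemma~\ref{connect}, whose hypotheses impose two constraints that fight each other: the pool must satisfy $|U|\ge 20 d_1 m$, and the prescribed path lengths must satisfy $k_i\ge \frac{2\log m}{\log d_1}+1$. Since you need $k_i\le \ell-1$ with $\ell$ as small as $200$, you are forced to take $d_1\ge m^{2/(\ell-2)}$, and then the pool cannot be driven below $\Theta\bigl(m^{1+2/(\ell-2)}\bigr)$. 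For constant $\ell$ this is $\Theta\bigl(n^{\frac12+\frac{1}{\ell-2}}\cdot \ell^{-(1+\frac{2}{\ell-2})}\bigr)$, which is polynomially larger than $\sqrt n$; with the simpler choice $d_1=m$ that the paper uses, the leftover is $\Theta(m^2)=\Theta(n/\ell^2)$, i.e.\ a positive proportion of the entire vertex set. An absorber of $O(\sqrt n\log n)$ pairs, which can only absorb $O(\sqrt n\log n)$ defect vertices, is nowhere near large enough. The paper instead takes $3r=\Theta(n/\ell)$ absorbing pairs (a constant fraction of all pairs), a flexible set $W_1$ of size $2r=\Theta(n/\ell)$, and arranges by an exact count (Phases~2--3: stop connecting at precisely $s=\frac{1+c}{\ell-2}r$ remaining pairs, then route those $s$ paths through the residual pool $L$ together with part of $W_1$) that the final leftover is exactly $r$ vertices of $W_1$. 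None of this bookkeeping survives if the absorber is sub-polynomial in $n/\ell$.

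A secondary gap: you never explain the mechanism that forces the leftover to lie \emph{inside} your interface $W^\star_{\mathrm{flex}}$; in the paper this happens automatically because the last $s$ paths are explicitly routed through $W_1$ (Phase~3 with the star-matching into $T_1$ and the partition $T_1\cup T_2=W_1$), so the uncovered vertices are a subset of $W_1$ by construction. Finally, the tool you cite for the short-hop connections, Lemma~\ref{star1}, is not what is used here — the connections and length adjustments are all done via Lemma~\ref{connect}, and the absorber itself is built from $40$-fans (triangle gadgets) plus the bipartite template of Lemma~\ref{flexiblematching}, not directly from a star-matching lemma.
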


We also need a technical lemma which will be used to (re)construct a forest in the proof of Theorem~\ref{th1+}. We first introduce a notion of tree array.

\begin{definition}[Tree array]
\label{def:tree}
Given a graph $G$, a subset of vertices $W$, a collection $\mathcal{I}$ of disjoint pairs of vertices from $V(G)\setminus W$ and integers $s,\Delta$, we call that $G$ contains a $(W,\mathcal{I},s,\Delta)$\emph{-tree array} if there exist
\begin{itemize}
	\item a family $\mathcal{P}=\{P_{xy}: (x,y)\in \mathcal{I}\}$ of mutually vertex-disjoint paths each of length $s$, where the internal vertices of all paths $P_{xy}$, denoted as $\mathsf{Int}(\mathcal{P})$, lie inside $W$;
	\item a family $\{T_v\}_{v\in \mathsf{Int}(\mathcal{P})}$ of mutually vertex-disjoint rooted trees inside $W$, such that for every path $P_{xy}$ and every internal vertex $v$, $T_v$ is a $\Delta$-ary tree rooted at $v$ with height $s$, which is vertex disjoint from $V(\mathcal{P})\setminus\{v\}$.  
    \end{itemize}
\end{definition}


\begin{lemma}\label{rec1}
For all $s, t, m,d_1,\Delta\in \mathbb{N}$ with $d_1\ge \De+2$ and $s\ge 2\lceil\frac{\log 2m}{\log (d_1-1)}\rceil+1$, let $G$ be an $m$-joined graph and $W\<V(G)$. If $G$ $d_1$-expands into $W$ and $|W|>10d_1m+ t(s+1)\De^{s+1}$, then for every family $\mathcal{I}$ of at most $t$ disjoint pairs of vertices from $V(G)\setminus W$, $G$ contains a $(W,\mathcal{I},s,\Delta)$-tree array.
\end{lemma}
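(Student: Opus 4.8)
The plan is to produce the required structure in two phases: first the family $\mathcal P$ of connecting paths, and then the $\Delta$-ary trees hanging off their internal vertices. Both phases are carried out greedily, in the spirit of the Friedman--Pippenger/Haxell expander-embedding technique, with the $d_1$-expansion of $G$ into $W$ supplying room for the ``small'' pieces of the structure and the $m$-joinedness used to ``close up'' once a frontier grows past size $m$; the slack term $10d_1m$ in the lower bound on $|W|$ is exactly what keeps enough of $W$ unused at every stage. Write $q:=(s+1)\De^{s+1}$, an upper bound on the number of vertices of one path together with its $s-1$ hanging trees (each $\De$-ary tree of height $s$ has fewer than $\De^{s+1}$ vertices), so the whole tree array occupies fewer than $tq$ vertices of $W$, leaving always more than $10d_1m$ spare.

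Phase 1 (connecting paths). I fix a pair $(x,y)\in\mathcal I$ and grow a BFS-type tree rooted at $x$ into $W$: while the current level $L$ has fewer than $\lceil|W|/2d_1\rceil$ vertices, the expansion gives $|N_G(L,W)|\ge d_1|L|$, of which at most $|L|$ can lie in the previous level, so the next level has size at least $(d_1-1)|L|$; hence after $r:=\lceil\frac{\log 2m}{\log(d_1-1)}\rceil$ steps the frontier has size at least $2m$. Doing the same from $y$ and keeping the two trees disjoint from each other and from the previously built paths, both frontiers have size $\ge m$, so the $m$-joinedness supplies an edge joining them; tracing back through the two trees yields an $(x,y)$-path with all internal vertices in $W$, and since $s\ge 2r+1$ its length can be made exactly $s$ by splitting $s-1$ as the sum of the two tree-depths. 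The remaining BFS vertices are released, and I iterate over the at most $t$ pairs. This phase is precisely where the hypothesis $s\ge 2\lceil\frac{\log 2m}{\log(d_1-1)}\rceil+1$ is needed.

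Phase 2 (hanging trees). Let $Z\subseteq W$ be the $t(s-1)$ internal vertices of the paths; each $z\in Z$ still has at least $d_1-2\ge\De$ neighbours in $W$ off the paths. I grow the $\De$-ary trees of height $s$ rooted at the vertices of $Z$ all at once, level by level: at each level one must attach to every frontier vertex $\De$ new vertices of $W$, disjoint from everything used so far. This is an $f$-matching problem; the generalized Hall condition is verified using the $d_1$-expansion of $G$ into $W$ on frontier-subsets below size $m$ and the $m$-joinedness on the larger ones, with Lemma~\ref{star1} as the clean tool to invoke once the frontier is large. The count closes because the whole forest has fewer than $t(s+1)\De^{s+1}$ vertices, which together with $10d_1m$ stays within the $|W|$ budget.

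The hard part is the bookkeeping in Phase~2 (and, more subtly, in maintaining disjointness in Phase~1): when a frontier is still small while the set $U$ of already-placed vertices is already much larger than $d_1$, the naive assertion ``$W\setminus U$ still expands'' is simply false for small sets. This forces the trees to be grown in parallel from all roots rather than one at a time, and the extension order must be chosen so that small frontiers are only ever met while $U$ is correspondingly small, the $m$-joinedness then absorbing everything once frontiers pass size $m$. Making this precise, and matching it against the exact slack $10d_1m$ and the path-length requirement coming out of Phase~1, is the core of the argument; the rest is routine BFS-growth and Hall-type counting.
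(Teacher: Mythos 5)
You have correctly identified the central obstacle --- that a naive sequential greedy growth fails because small frontiers meet a large set of already-occupied vertices, and that ``$W$ minus the used part still expands'' is false for small sets --- but your proposal does not actually resolve it. The parallel-growth remedy you suggest for Phase~1 does not work on the stated vertex budget: a simultaneous BFS from all $2t$ seeds out to depth $r=\lceil\log 2m/\log(d_1-1)\rceil$ uses on the order of $t\cdot m$ exploration vertices (frontiers alone reach size $\Theta(tm)$), and since $t$ can be as large as $m$ this can vastly exceed $|W|>10d_1m+t(s+1)\Delta^{s+1}$; the final paths occupy only $t(s+1)$ vertices, but the BFS scaffolding during construction does not, and you cannot ``release'' it before the $m$-joined step because the whole family of frontiers must be alive at once. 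Likewise in Phase~2 the invocation of Lemma~\ref{star1} is not justified: that lemma requires two-sided expansion (small subsets of the reservoir $B$ must expand back into the frontier $A$), which is not available here, and the defect-Hall condition $|N(X,B)|\ge\Delta|X|$ fails for a singleton $X$ once the used region exceeds $d_1-\Delta$ vertices --- which is exactly the problem you flagged and then declared to be ``the core of the argument'' without carrying it out.

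The paper avoids all of this bookkeeping by using a maintained \emph{invariant} rather than direct counting: it shows the empty graph on $V(\mathcal I)$ is $(d_1,m)$-extendable in $G[W\cup V(\mathcal I)]$ (Definition~\ref{extendable}), and then repeatedly applies Montgomery's Lemma~\ref{extendpath} to attach the $(x,y)$-paths and Lemma~\ref{extendtree} to attach the $\Delta$-ary trees, each time concluding that the enlarged subgraph remains $(d_1,m)$-extendable as long as its order stays below $|W|-10d_1m-\ldots$. The extendability inequality explicitly subtracts $\sum_{x\in X\cap V(S)}(d(x,S)-1)$, i.e., it charges already-used degree against the expansion demand, so the framework internalizes precisely the slack that your Phase~1/Phase~2 counting does not control. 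Without this (or an equivalent absorption of the used vertices into the hypothesis), your proof has a genuine gap at exactly the place you identified.
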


\begin{corollary}\label{rec}
 For sufficiently large $n\in \N$ and $d,\De\in \N$ with $n>d\ge\De^{5\sqrt{\log n}}$, let $G$ be an $m$-joined graph with \[m\le\frac{n}{2d}\] and $W$ be a set of at least $\frac{n}{\sqrt{d}}$ vertices in $G$. If $G$ $\De^{2\sqrt{\log n}}$-expands into $W$, then for every $s\in [\sqrt{\log n}-1,2\sqrt{\log n}-1]$ and every family $\mathcal{I}$ of at most $m$ disjoint pairs of vertices from $V(G)- W$, $G$ contains a $(W,\mathcal{I},s,\Delta)$-tree array.
\end{corollary}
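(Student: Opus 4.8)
The plan is to derive Corollary~\ref{rec} directly from Lemma~\ref{rec1}, by applying the latter with $d_1:=\De^{2\sqrt{\log n}}$ and $t:=m$ while keeping $s,\De,m,G,W$ as they are; the $(W,\mathcal{I},s,\De)$-tree array produced by Lemma~\ref{rec1} is exactly what the corollary asserts. (We may assume $\De\ge 2$, the case $\De=1$ being trivial in the applications. If $\De^{2\sqrt{\log n}}$ is not an integer, take $d_1$ to be its floor; then $d_1\ge\tfrac12\De^{2\sqrt{\log n}}$, and all the inequalities below still hold comfortably.) Thus the whole argument reduces to verifying the hypotheses of Lemma~\ref{rec1}: that $G$ is $m$-joined (given) and $d_1$-expands into $W$; that $d_1\ge\De+2$; that $s\ge 2\lceil\frac{\log 2m}{\log(d_1-1)}\rceil+1$; and that $|W|>10d_1m+t(s+1)\De^{s+1}$. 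For the expansion hypothesis I would note $d_1\le\De^{2\sqrt{\log n}}$, so clause~(i) of $d_1$-expansion is inherited from the assumed $\De^{2\sqrt{\log n}}$-expansion of $G$ into $W$, while clause~(ii) holds because $G$ is $m$-joined and $m\le\lceil|W|/(2d_1)\rceil$ (which follows from $d\ge\De^{4\sqrt{\log n}}$ together with $|W|\ge n/\sqrt d$ and $m\le n/(2d)$).

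The bound $d_1\ge\De+2$ is immediate, since $\De^{2\sqrt{\log n}}\ge\De^2\ge\De+2$ for $\De\ge 2$ and $n$ large. The real content is the condition on $s$, and this is the one step where the hypothesis $d\ge\De^{5\sqrt{\log n}}$ is used to its full strength. From $m\le\frac{n}{2d}\le\frac{n}{2\De^{5\sqrt{\log n}}}$ one gets $\log(2m)\le\log n-5\sqrt{\log n}\log\De$, whereas $\log(d_1-1)\ge 2\sqrt{\log n}\log\De-1$; dividing, for $n$ large the quotient $\frac{\log 2m}{\log(d_1-1)}$ is at most $\frac{\sqrt{\log n}}{2\log\De}-\frac52+o(1)\le\frac{\sqrt{\log n}}{2}-2$, so that $2\lceil\frac{\log 2m}{\log(d_1-1)}\rceil+1\le\sqrt{\log n}-1\le s$. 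I expect this estimate to be the only genuinely delicate point: the slack of roughly $\tfrac52$ is precisely what the exponent $5$ in $d\ge\De^{5\sqrt{\log n}}$ buys, and with a weaker bound such as $d\ge\De^{3\sqrt{\log n}}$ the quotient would hover around $\frac{\sqrt{\log n}}{2}$ and the condition would fail for $s$ at the lower end of the range $[\sqrt{\log n}-1,2\sqrt{\log n}-1]$. This is the concrete form of the ``reconstruction of paths of length $\sqrt{\log n}$'' bottleneck mentioned in Section~1.6.

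For the size condition, I would use $s+1\le 2\sqrt{\log n}$, $\De^{s+1}\le\De^{2\sqrt{\log n}}$ and $t=m\le\frac{n}{2d}$ to bound
\[
10d_1m+t(s+1)\De^{s+1}\ \le\ \frac{5\De^{2\sqrt{\log n}}n}{d}+\frac{\sqrt{\log n}\,\De^{2\sqrt{\log n}}n}{d}\ \le\ \frac{2\sqrt{\log n}\,\De^{2\sqrt{\log n}}n}{d}.
\]
Since $\De^{\sqrt{\log n}}\ge 4\sqrt{\log n}$ for $n$ large (as $\De\ge 2$), we have $d\ge\De^{5\sqrt{\log n}}\ge 4\sqrt{\log n}\,\De^{4\sqrt{\log n}}$ and $\De^{2\sqrt{\log n}}\le\sqrt d$; substituting, the right-hand side is at most $\frac{n}{2\De^{2\sqrt{\log n}}}\le\frac{n}{2\sqrt d}<\frac{n}{\sqrt d}\le|W|$. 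Hence every hypothesis of Lemma~\ref{rec1} holds and the corollary follows.

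Apart from the $s$-estimate above, everything is routine size bookkeeping. The only other point requiring a word of care is the interplay between the integrality of $\De^{2\sqrt{\log n}}$ and the exact ceiling $\lceil|W|/(2d_1)\rceil$ appearing in the definition of $d_1$-expansion; because $G$ is $m$-joined with $m$ safely below all the thresholds in play, and because the displayed size inequality leaves a large multiplicative gap, none of this causes real trouble.
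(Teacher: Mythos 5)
Your plan and all the main ingredients match the paper's own proof: apply Lemma~\ref{rec1} with $d_1=\De^{2\sqrt{\log n}}$ and $t=m$, and verify the hypotheses. The $s$-estimate is correct (and you are right that this is the step where the exponent $5$ is earned). However, the last few inequalities in your size check are wrong as written. After reaching the upper bound $\frac{2\sqrt{\log n}\,\De^{2\sqrt{\log n}}n}{d}$ you substitute $d\ge 4\sqrt{\log n}\,\De^{4\sqrt{\log n}}$ to get $\frac{n}{2\De^{2\sqrt{\log n}}}$, and then assert $\frac{n}{2\De^{2\sqrt{\log n}}}\le\frac{n}{2\sqrt d}$ ``because $\De^{2\sqrt{\log n}}\le\sqrt d$.'' That inequality runs the other way: $\De^{2\sqrt{\log n}}\le\sqrt d$ gives $\frac{n}{2\De^{2\sqrt{\log n}}}\ge\frac{n}{2\sqrt d}$. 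In fact, since $d\ge\De^{5\sqrt{\log n}}$ forces $\sqrt d>2\De^{2\sqrt{\log n}}$ for large $n$, the quantity $\frac{n}{2\De^{2\sqrt{\log n}}}$ is \emph{strictly larger} than $\frac{n}{\sqrt d}$, so the only lower bound available on $|W|$, namely $|W|\ge n/\sqrt d$, cannot certify $\frac{n}{2\De^{2\sqrt{\log n}}}\le|W|$. The substitution has discarded the $d$-dependence inherited from $m\le n/(2d)$, and the resulting bound is too weak to compare with $|W|$.

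The repair is to keep $d$ in the comparison and go directly to $n/\sqrt d$: one needs $\frac{2\sqrt{\log n}\,\De^{2\sqrt{\log n}}n}{d}\le\frac{n}{\sqrt d}$, equivalently $\sqrt d\ge 2\sqrt{\log n}\,\De^{2\sqrt{\log n}}$, i.e.\ $d\ge 4\log n\cdot\De^{4\sqrt{\log n}}$. Since $d\ge\De^{5\sqrt{\log n}}=\De^{\sqrt{\log n}}\cdot\De^{4\sqrt{\log n}}$, this reduces to $\De^{\sqrt{\log n}}\ge 4\log n$, which does hold for $\De\ge 2$ and $n$ large. Note that this is the bound you actually need, not the weaker $\De^{\sqrt{\log n}}\ge 4\sqrt{\log n}$ you quoted. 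With this one-line replacement the size check closes, and the rest of your verification is sound and coincides with the paper's argument.
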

\begin{proof}
Let $h=\sqrt{\log n}$ and $d_1=\De^{2h}$. Then $G$ $d_1$-expands into $W$ and $|W|\ge \frac{n}{\De^{5h/2}}$.
Note that $s\in [h-1,2h-1]$ and \[2\left\lceil\frac{\log 2m}{\log (d_1-1)}\right\rceil+1 \le2\left\lceil\frac{h^2-\log\De^{ 5h}}{\log (\De^{2h}-1)}\right\rceil+1 \le h-1 .\] Since $n$ and also $h$ are sufficiently large, we have that \[10d_1m+ m(s+1)\De^{s+1}\le\frac{n}{\De^{5h}}(10\De^{2h}+10+2h\De^{2h})\le\frac{n}{\De^{3h}}(2h+11)\le\frac{n}{\De^{5h/2}}\le|W|.\] We can apply Lemma~\ref{rec1} with $t=m$ to obtain a desired $(W,\mathcal{I},s,\Delta)$-tree array.
\end{proof}

\section{Proof of Theorem~\ref{th1+}}
In this section, we prove Theorem~\ref{th1+}. Throughout the proof, we write
\begin{eqnarray}\label{eq1}
h=\lceil\sqrt{\log n}\rceil~\text{and}~ k=\lceil\log^3 n\rceil.
\end{eqnarray}
Moreover, we may take \begin{eqnarray}\label{eq11}
d=\De^{5\sqrt{\log n}},~ m:=\frac{n}{2d}\le\frac{n}{2\De^{5h-5}}
\end{eqnarray}
 and fix $G$ to be an $(n,d)$-expander. Then $G$ is $m$-joined.

Given a tree $T\in \mathcal{T}(n,\Delta)$, we write $T_0:=T$ and $T_i$ for the subtree of $T_{i-1}$ obtained by removing all leaves in $T_{i-1}$, where $i\in [h]$. Let $L_{i}$ be the set of all leaves in $T_i$ and $n_i:=|V(T_i)|, i\in\{0,1,\ldots,h\}$. Then it is easy to show that $n_i(1+\Delta+\ldots+\Delta^i)\ge n$ for every $i\in[h]$. In particular,
\begin{eqnarray}\label{eq2}
n_h\ge \frac{n}{\De^{h+1}}.
\end{eqnarray}
 The proof of Theorem~\ref{th1+} is split into two cases depending on the structure of subtree $T_{h}$.


\subsection{$T_{h}$ has many leaves}

We first consider the case when $T_{h}$ has a set $L_{h}$ of at least $n_h/4k$ leaves. Note that by definition, for every $u\in L_h$, there exists a choice $f(u)\in L_1$ which is of distance exactly $h-1$ from $u$ (we choose an arbitrary one if there are more than one choice). Let $B\<\{f(u): u\in L_h\}$ be a set of exactly $n_{h}/4k$ vertices and $C\subseteq L_0$ be the set of descendants of vertices in $B$. Denote by $A_2$ the neighborhood of $B$ inside $L_2$ and for every $i=3,4,\ldots,h$, let $A_i$ be the neighborhood of $A_{i-1}$ inside $L_i$. Then it is easy to see that $|A_i|=|B|=n_{h}/4k$ for every $i=2,3,\ldots,h$.

Let $T':=T-B\cup C$. Now our proof proceeds in the following steps. 

\begin{figure}[htb]
\center{\includegraphics[width=10.4cm] {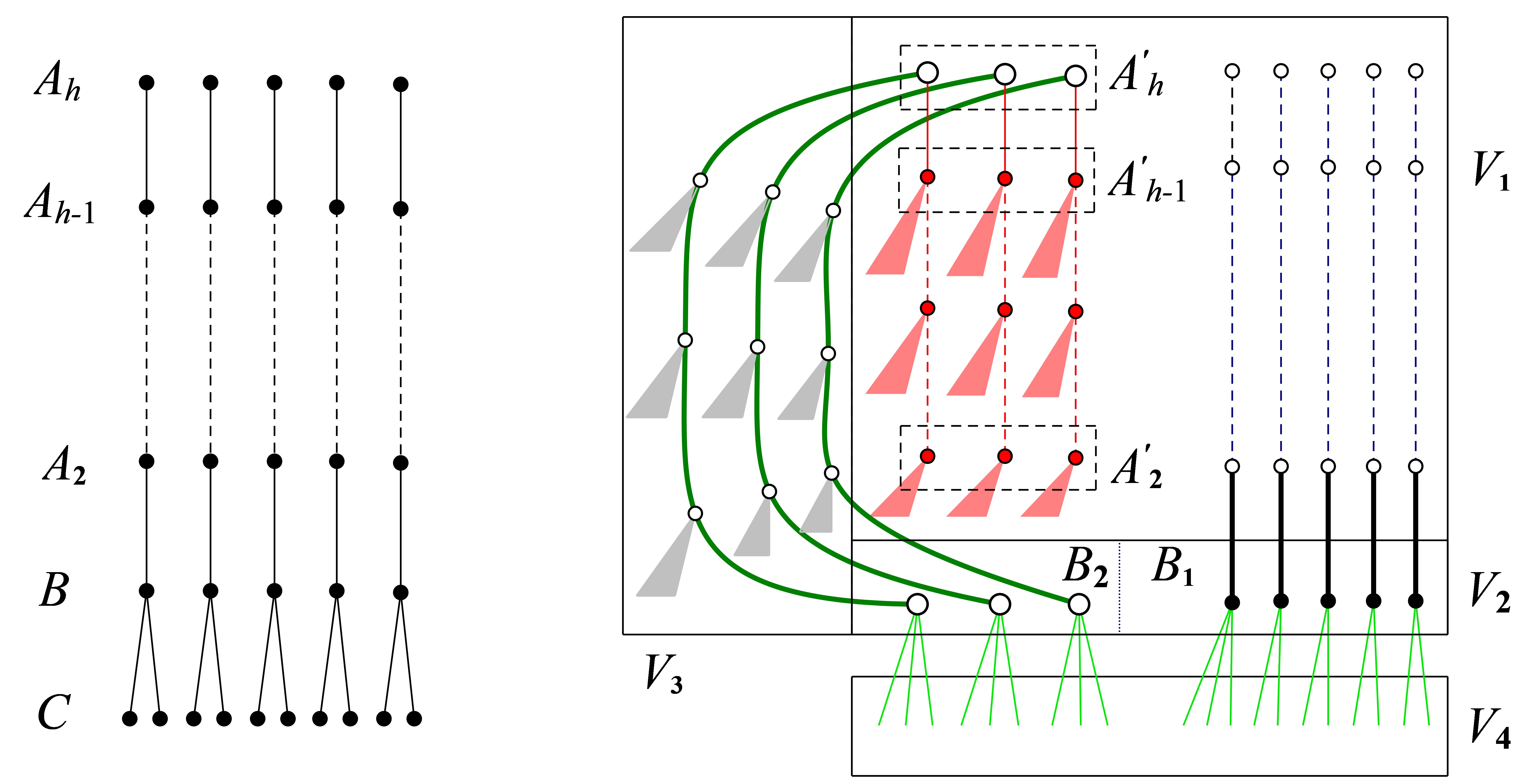}}
\caption{The red trees have height at most $h$ and would be replaced by the green paths attached with gray subtrees inside $V_3$. }
\label{fig2}
\end{figure}

\medskip

\noindent \textbf{Phase $0$. Partition $V(G)$}. We first randomly partition $V(G)$ into four parts $V_1,V_2,V_3,V_4$ such that
\begin{eqnarray}\label{eq3}
|V_1|=n-|B|-|C|+22\De m,~|V_2|=|B|~\text{and} ~|V_3|=|V_4|=\frac{|C|}{2}-11\De m.
\end{eqnarray}

We can easily check that
\begin{eqnarray}
|V_2|=|B|=\frac{n_{h}}{4k}\overset{\eqref{eq2}}{\ge} \frac{n}{4k\De^{h+1}} \overset{\eqref{eq11}}{\ge}\frac{n}{\sqrt{d}}, \label{eq31} \\
|V_3|=|V_4|=\frac{|C|}{2}-11\De m\ge\frac{|B|}{2}-11\De m\ge\frac{n}{\De^{2h}}-11\De\frac{n}{2\De^{5h-5}}\overset{\eqref{eq11}}{\ge}\frac{n}{\sqrt{d}}, \label{eq32} \\
|V_1|=n-|B|-|C|+22\De m >\sum_{i=2}^h|A_i|=(h-1)|B|\overset{\eqref{eq31}}{>}\frac{n}{\sqrt{d}}.\label{eq33}
\end{eqnarray}
Since $G$ is an $(n,d)$-expander  and
\begin{eqnarray}\label{eq34}
d_i:=\frac{|V_i|}{5n}d\ge \frac{\sqrt{d}}{5}>\max\{\De^{2\sqrt{\log n}},2\log n\}~\text{for every}~ i\in [4],
\end{eqnarray}
 by Lemma~\ref{randompart} applied to $G$ with $W=V(G)$, there exists such a partition with the following property:
\stepcounter{propcounter}
\begin{enumerate}[label = ({\bfseries \Alph{propcounter}\arabic{enumi}})]

       \item\label{p2} $G$ $d_i$-expands into $V_i$ for all $i\in[4]$.
\end{enumerate}

\medskip
\noindent \textbf{Phase $1$. Embed $T'$ in $G[V_1]$}.
By property~\ref{p2}, we can apply Theorem~\ref{almost} to $G[V_1]$ to find an embedding $\phi|_{T'}$ that maps $T'$ to $V_1$. Indeed, as $d_1\ge 2 \De$ in \eqref{eq34}, it suffices to ensure that the order of $T'$ is no larger than $|V_1| - 4\Delta \lceil\frac{|V_1|}{2d_1}\rceil$. This easily follows from \eqref{eq33} that \[|V_1| -|T'|=22\De m\overset{\eqref{eq11}}{>} 4\De\left\lceil\frac{5n}{2d}\right\rceil=4\De\left\lceil\frac{|V_1|}{2d_1}\right\rceil.\]

\medskip
\noindent \textbf{Phase $2$. Embed $B$ into $V_2$}. We first build a maximal matching $M$ between $\phi(A_2)$ and $V_2$. Denote by $A_2', B_2$ the sets of vertices in $\phi(A_2)$ and $V_2$, respectively, that are not covered by $M$. Since $G$ is $m$-joined, we have \[|A_2'|=|B_2|< m.\]
In order to finish the embedding of $B_2$, we shall first prune subtrees and then regrow them by using tree arrays.
For each $i\in[3,h]$, let $A_i'$ be the set of vertices in $\phi(A_i)$ which are ancestors of vertices in $A_2'$ (see Figure~\ref{fig2}).
Moreover, for every $v\in A_2'$ and its ancestor $u\in A_h'$, denote by $P_{u,v}$ the unique $(u,v)$-path in $\phi|_{T'}$. Then all these $P_{u,v}$ are pairwise disjoint and the removal of $\cup _{v\in A_2'}E(P_{u,v})$ from $\phi|_{T'}$ would yield a family of disjoint trees $T_x$ with root $x\in \cup_{i=2}^{h-1} A'_i$, each of height at most $h-1$.

Let $\mathcal{I}$ be a family of $|A_2'|$ disjoint ordered pairs $(u,w)\in A_h'\times B_2$ of vertices. 
Note that we have
\[
d_3\ge\De^{2\sqrt{\log n}},~|V_3|\ge \frac{n}{\sqrt{d}} ~ \text{and}~ |\mathcal{I}|< m
\]
by~\eqref{eq32} and~\eqref{eq34}.
Then by property~\ref{p2} and Corollary~\ref{rec} with $s=h-1$ and $W=V_3$, we obtain a $(V_3,\mathcal{I},h-1,\Delta)$-tree array. Then one can obtain an embedding $\phi|_{T-C}$ from $\phi|_{T'}$ by
\stepcounter{propcounter}
\begin{enumerate}[label = ({\bfseries \Alph{propcounter}\arabic{enumi}})]
       \item\label{prun} first deleting all vertices in $ A_{h-1}'$ and their descendants (in $\phi|_{T'}$, the trees in red in Figure~\ref{fig2});

       \item\label{rebuild} then connecting all pairs $(u,w)\in \mathcal{I}$ via vertex-disjoint paths $Q_{u,w}$ each of length $h-1$; and at every vertex $z\in V(Q_{u,w})\setminus \{u\}$, we regrow a desired copy of $T_x$ for some $x\in V(P_{u,v})\setminus \{u\}$.
\end{enumerate}
Observe that such paths $Q_{u,w}$ and trees in \ref{rebuild} can be easily obtained from the $(V_3,\mathcal{I},h-1,\De)$-tree array (by taking subtrees from $\De$-ary trees if necessary).

Hence it remains to embed $C$ into the remaining set $L$ of vertices with $L=V(G)-\phi(T-C)$.

\medskip
\noindent \textbf{Phase $3$. Build a star-matching between $V_2$ and $L$}.

To finish the embedding of $T$, we define an auxiliary function $f:V_2\rightarrow [\De]$ by taking $f(v)=d_T(\phi^{-1}(v))-1$, that is, the number of leaves we need to attach to every $v\in V_2$. Then $|L|=|C|=\sum_{v\in V_2}f(v)$. Note that $V_4\<L$ and by property~\ref{p2}, we observe that for every $X\< L$ with $|X|\le m<\frac{5n}{2d}=\frac{|V_2|}{2d_2}$,
\[|N_G(X,V_2)|\ge d_2|X|\overset{\eqref{eq34}}{\ge} \De|X|\] and similarly
$|N_G(Y,L)|\ge |N_G(Y,V_4)|\ge d_4|Y|\ge \De|Y|$ for every $Y\< V_2$ with $|Y|\le m$. By Lemma~\ref{star1} with $A=V_2,B=L$, we obtain a desired $f$-matching, which together with $\phi|_{T-C}$ completes an embedding of $T$.

\subsection{$T_{h}$ has many long bare paths}

In this section, we consider the case when $T_h$ has $\lfloor\frac{n_h}{4k}\rfloor$ vertex-disjoint bare paths, say $P_i, i\in\lfloor\frac{n_h}{4k}\rfloor$ each of length $k=\lceil\log^3n\rceil$. Let $x_i,y_i$ be the ends of $P_i$, for every $i\in\lfloor\frac{n_h}{4k}\rfloor$. Let $Q_i\<P_i$ be a subpath of length $k-4h$, with each of ends of distance exactly $2h$ from $x_i$ and $y_i$, respectively. Given an internal vertex $v\in V(Q_i)$, we say $Q_i$ \emph{sees} a pendant star $S$ \emph{from} $v$ in $T$ if there is a path in $T\setminus Q_i$ connecting $v$ to the root of $S$. Let $r=\lfloor\frac{n_h}{8k}\rfloor$.
In the forthcoming subsections, we shall consider three subcases depending on the surroundings of $Q_i$:
\begin{enumerate}[label= \textbf{Case \Alph*}]
  \item  at least $r$ paths $Q_i$ are bare in $T$;
  \item  at least $r/2$ paths $Q_i$ form caterpillars in $T$ with at least one leg;
  \item  at least $r/2$ paths $Q_i$ see pendant stars in $T$.
\end{enumerate}

 \subsubsection{Many long bare paths $Q_i$ in $T$: \textbf{Case A}}

We may assume that $Q_1,Q_2,\ldots,Q_{r}$ are bare paths in $T$, where each $Q_i$ has ends $a_i,b_i$. Let $T'=T-\cup_{i\in[r]}V(Q_i-\{a_i,b_i\})$. Recall that every $Q_i$ has length $k':=k-4h$. Then our embedding proceeds by first randomly partitioning $V(G)$ into two parts $V_1,V_2$ such that
\begin{eqnarray}\label{eq50}
|V_1|=|T'|+22\De m~\text{and}~|V_2|=(k'-1)r-22\De m. \nonumber
\end{eqnarray}
We can easily check that $|V_1|\ge \frac{n}{2},|V_2|\ge\frac{n_h}{16}-22\De m\ge\frac{n}{\sqrt{d}}$. Since $G$ is an $(n,d)$-expander and
\begin{eqnarray}\label{eq51}
d_i:=\frac{|V_i|}{5n}d\ge \frac{\sqrt{d}}{5}>\max\{2\De,\log^4 n\}~\text{for every}~ i\in [2],
\end{eqnarray}
by Lemma~\ref{randompart}, there exists a partition $V_1\cup V_2=V$ such that $G$ $d_i$-expands into $V_i$ for all $i\in[2]$.
Based on this,  we can first apply Theorem~\ref{almost} to $G[V_1]$ to find an embedding of $T'$ inside $V_1$. Indeed, it suffices to ensure that $d_1\ge 2 \De$ and the order of $T'$ is no larger than $|V_1| - 4\Delta \left\lceil\frac{|V_1|}{2d_1} \right\rceil$. This easily follows from \eqref{eq51} and $|V_1| -|T'|=22\De m\ge 4\De\left\lceil\frac{5n}{2d}\right\rceil=4\De\left\lceil\frac{|V_1|}{2d_1}\right\rceil.$

Let $\phi|_{T'}$ be the resulting embedding of $T'$ and $A,B$ be the set of images of ends $a_i,b_i$, respectively. Moreover, denote  by $V_1'$ the set of leftover vertices not covered by $\phi|_{T'}$. Then $|V_1'|=22\De m$. Now it remains to embed the bare paths $Q_i,i\in[r]$ in $V_2':=V_2\cup V_1'$. Recall that $h=\lceil\sqrt{\log n}\rceil$ and $k=\lceil\log^3 n\rceil$. Since $G$ $d_2$-expands into $V_2$ and $|V_2'|\le 2|V_2|$, one can easily obtain that $G$ actually $\frac{d_2}{2}$-expands into $V_2'$. As $d_2\ge \log^4 n$ and $k'=k-4h\ge 10^3\log^2 n$,
such a collection of vertex-disjoint paths can be obtained by Theorem~\ref{pathcover} applied to $G[A\cup B\cup V_2]$ with $(d_2,k'+1,(k'+1)r,V_2)$ in place of $(d,\ell,n, W)$, which together with $\phi|_{T'}$ completes the embedding of $T$.

\subsubsection{Many long caterpillars in $T$: \textbf{Case B}}\label{sec5.2}


Without loss of generality, we may assume that the subpaths $Q_i\< P_i, 1\le i\le r/2$, are caterpillars in $T$ with at least one leg. Note that by taking induced subgraphs and renaming, we may further assume that each $Q_i$ has length $k':=\lceil\frac{k-4h}{2}\rceil$ and we can write $Q_i=a_0^ia_1^i\ldots a_{k'}^i$ so that $a_1^i$ is attached with at least one leg. Denote by $A_j, j=0,1,\ldots,k'$ the set of the vertices $a_j^i$ taken over all paths $Q_i, 1\le i\le r/2$. Moreover we write 
$L$ for the set of leaves of $T$ that are adjacent to vertices in $\bigcup_{j=1}^{k'-1} A_j$. Furthermore, we write $L^{+}:=N_T(L)$. Then $A_1\<L^{+}$ and $|L|\ge|L^{+}|\ge|A_1|= r/2$.

Let $T':=T-\bigcup_{j=1}^{k'-1} A_j\cup L$.
Now our embedding proceeds as follows.\\
\medskip
\begin{figure}[htb]
\center{\includegraphics[width=10.1cm] {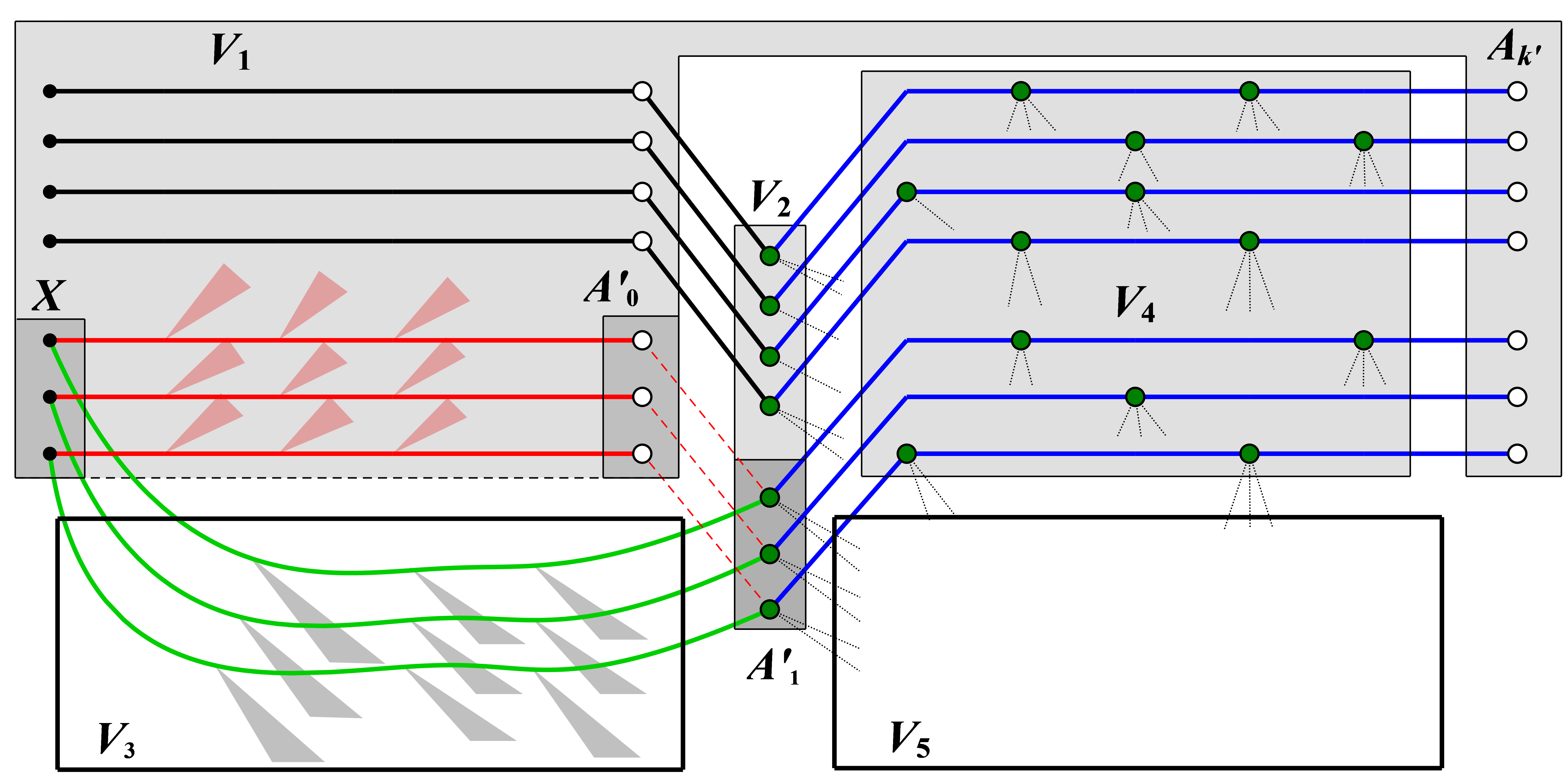}}
\caption{The red part is pruned and then used for building a star-matching}
\label{fig4}
\end{figure}

\noindent \textbf{Phase $0$. Partition $V(G)$}.
 \medskip\\
 We first randomly partition $V(G)$ into five parts $V_1,V_2,V_3,V_4,V_5$ (see Figure~\ref{fig4}) such that
\begin{eqnarray}\label{eq52}
|V_2|=|A_1|=r/2, ~|V_4|=(k'-2)|V_2|~\text{and}~|V_3|=|V_5|=\frac{|L|}{2}-11\De m.
\end{eqnarray}
By \eqref{eq2} and the fact that $|L|\le \De (|V_2|+|V_4|)$, we can easily check that
\begin{eqnarray}\label{eq53}
|V_2|=r/2\ge\lfloor\frac{n_{h}}{16k}\rfloor\ge\frac{n}{16k\De^{h+1}}\ge \frac{n}{\sqrt{d}} ,\nonumber \\
|V_3|=|V_5|=\frac{|L|}{2}-11\De m\ge\frac{1}{2}\lfloor\frac{n_h}{16k}\rfloor-11\De m\ge\frac{n}{\sqrt{d}},  \\
|V_1|=n-|V_2|-|V_3|-|V_4|-|V_5|=n-|T'|+22\De m>|A_0|=r/2\ge\frac{n}{\sqrt{d}}.\nonumber
\end{eqnarray}
Since $G$ is an $(n,d)$-expander  and
\begin{eqnarray}\label{eq54}
d_i:=\frac{|V_i|}{5n}d\ge \frac{\sqrt{d}}{5}>\max\{\De^{2\sqrt{\log n}},\log^4 n\}~\text{for every}~ i\in [5],
\end{eqnarray}
by Lemma~\ref{randompart}, there exists such a partition $\{V_1,V_2,V_3,V_4,V_5\}$ with the following property:
\stepcounter{propcounter}
\begin{enumerate}[label = ({\bfseries \Alph{propcounter}\arabic{enumi}})]

       \item\label{p4} $G$ $d_i$-expands into $V_i$ for all $i\in[5]$.
\end{enumerate}

\medskip
\noindent \textbf{Phase $1$. Embed $T'$ in $G[V_1]$}.
\medskip\\
By property~\ref{p4}, we can apply Theorem~\ref{almost} to $G[V_1]$ to find an embedding $\phi|_{T'}$ inside $V_1$. Indeed, it suffices to ensure that $d_1\ge 2 \De$ and the order of $T'$ is no larger than $|V_1| - 4\Delta \lceil\frac{|V_1|}{2d_1}\rceil$. This easily follows from \eqref{eq54} and 
$|V_1| - |T'|=22\De m\ge 4\De\left\lceil\frac{|V_1|}{2d_1}\right\rceil.$

\medskip
\noindent \textbf{Phase $2$. Embed $A_1$ into $V_2$}.
 \medskip\\
We first build a maximal matching $M$ between $\phi(A_0)$ and $V_2$. Denote by $A'_0, A_1'$ the sets of vertices in $\phi(A_0)$ and $V_2$, respectively, that are not covered by $M$. Since $G$ is $m$-joined, we have \[|A'_0|=|A'_1|< m\le\frac{n}{2\De^{5h-5}}.\]
In order to complete the embedding of $A_1$, we shall use Corollary~\ref{rec} to build tree arrays.
For each $v\in A_0'$, let $x_v$ be the vertex such that $\phi^{-1}(x_v)$ is on $P_i\setminus Q_i$ with distance $h$ to $v$ and $X:=\{x_v: v\in A_0'\}$ (see Figure~\ref{fig4}). Let $\mathcal{I}$ be a family of $|A_1'|$ disjoint pairs obtained by arbitrarily pairing vertices between $X$ and $A_1'$.
Since \[|V_3|\ge \frac{n}{\sqrt{d}} ~ \text{and}~ |\mathcal{I}|< m,\] by property~\ref{p4} and Corollary~\ref{rec} with $s=h$ and $W=V_3$, we obtain a $(V_3,\mathcal{I},h,\Delta)$-tree array. Using the technique as in \ref{prun}\ref{rebuild}, one can obtain an embedding $\phi$ of $T-\bigcup_{j=2}^{k'-1} A_j-L$ such that $\phi(A_1)=V_2$.

\medskip
\noindent \textbf{Phase $3$. Embed $\bigcup_{j=2}^{k'-1} A_j$ into $V_4$}
\medskip\\
Let $\mathcal{P}$ be a family of the vertex pairs $\{\phi(a_1^i),\phi(a_{k'}^i)\}$, $1\le i\le r/2$. We shall use Theorem~\ref{pathcover} to construct for all pairs in $\mathcal{P}$, pairwise vertex-disjoint paths each of length $k'-1$ whilst covering all vertices in $V_4$.
Since $G$ $d_4$-expands into $V_4$ with $d_4>\log^4 n$ (see \eqref{eq54}) and $k'\ge 10^3\log^2n$, by applying Theorem~\ref{pathcover} to $G[V_4\cup V(\mathcal{P})]$ with $W=V_4$ and $\ell=k'$, we obtain such a family of paths. Let $\phi|_{T-L}$ be the resulting embedding and $R=V(G)\setminus \phi(T-L)$.

\medskip
\noindent \textbf{Phase $4$. Embed $L$ into $R$ via a star-matching}.
\medskip\\
Recall that $L^{+}$ is the neighborhood of $L$ in $T$. We shall build a star-matching between $\phi(L^{+})$ and $R$.
To be more precise, we define an auxiliary function $f:\phi(L^{+})\rightarrow [\De]$ by taking $f(v)=d_T(\phi^{-1}(v))-2$, that is, the number of leaves we need to attach to every $v\in \phi(L^{+})$ so as to complete the embedding.
Note that $V_5\<R,V_2\<\phi(L^{+})$. Then it follows from \ref{p4} that
for every $X\< R$ with $|X|\le m<\frac{5n}{2d}=\frac{|V_2|}{2d_2}$,
\[|N_G(X,\phi(L^{+}))|\ge|N_G(X,V_2)|\ge d_2|X|\ge \De|X|\] and similarly for every $Y\< \phi(L^{+})$ with $|Y|\le m$,
$|N_G(Y,R)|\ge |N_G(Y,V_5)|\ge d_5|Y|\ge \De|Y|$.
Applying Lemma~\ref{star1}, we have a desired $f$-matching, which together with $\phi|_{T-L}$ forms an embedding of $T$.

\subsubsection{Many subpaths $Q_i$ see pendant stars in $T$: \textbf{Case C}}

\begin{figure}[h]
\center{\includegraphics[width=10.1cm] {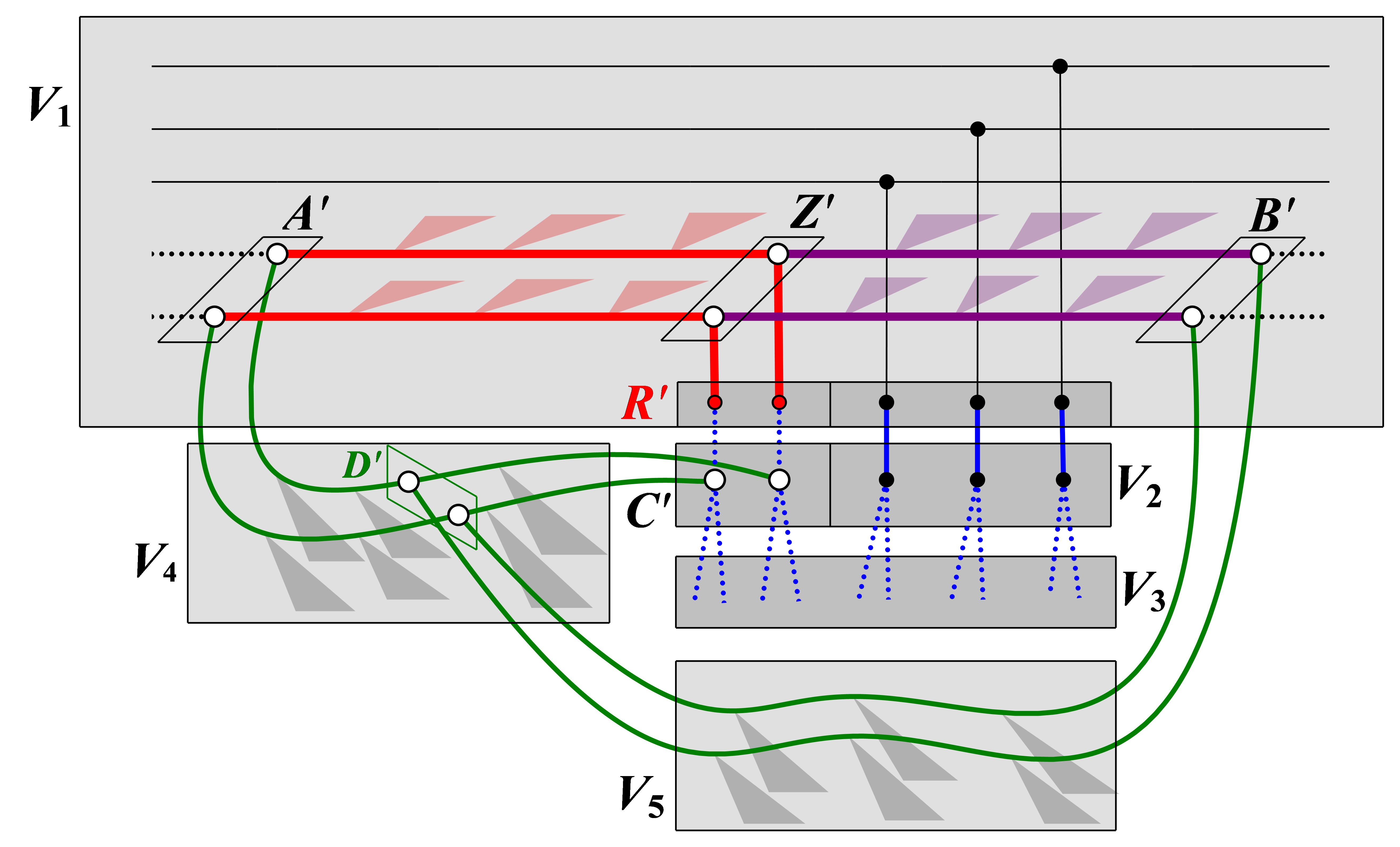}}
\caption{}
\label{fig5}
\end{figure}
Recall that $P_i$ is a bare paths in $T_h$ of length $k=\lceil\log^3n\rceil$ and has endpoints $x_i,y_i$ and $Q_i\<P_i$ is defined as a subpath of length $k-4h$, with each of ends of distance exactly $2h$ from $x_i$ and $y_i$, respectively.
Similarly, we may assume that for every $1\le i\le r/2$, the subpath $Q_i$ sees at least one pendant star, and we arbitrarily choose such a pendant star, denoted as $S_i$. Accordingly, we denote by
\begin{itemize}
  \item $C$ the set of centers $c_i$ of pendant star $S_i, i\in[r/2]$;
  \item $R$ the set of roots $r_i$ of pendant star $S_i, i\in[r/2]$;
  \item $Z$ the set of vertices $z_i\in V(Q_i), i\in[r/2]$ such that $Q_i$ sees $S_i$ from $z_i$;
  \item $A$ the set of vertices $a_i$ inside the subpath $P_i(x_i,z_i), i\in[r/2]$ such that each $a_i$ is of distance $h-1$ from $r_i$;
  \item $B$ the set of vertices $b_i$ inside the subpath $P_i(z_i,y_i), i\in[r/2]$ such that each $b_i$ is of distance $h$ from $z_i$;
\end{itemize}
Moreover we write $C^-$ for the set of leaves in $T$ that are attached to the vertices in $C$. Thus $r/2\le|C|\le |C^-|\le (\De-1) |C|$.
Let $T':=T-(C\cup C^-)$.
Now our embedding proceeds as follows.

\medskip
\noindent \textbf{Phase $0$. Partition $V(G)$ into $V_1,V_2,V_3,V_4,V_5$}.
\medskip\\
Similar to previous arguments, there exists a partition $V(G)=V_1\cup V_2\cup\cdots\cup V_5$ satisfying the following properties:
\stepcounter{propcounter}
\begin{enumerate}[label = ({\bfseries \Alph{propcounter}\arabic{enumi}})]
       \item\label{p5}   $|V_2|=|C|, ~|V_3|=|V_4|=|V_5|=\frac{|C^-|}{3}-7\De m, ~|V_1|=n-|C|-|C^-|+21\De m=|T'|+21\De m$;

       \item\label{p7} $G$ $d_i$-expands into $V_i$ with $d_i:=\frac{|V_i|}{5n}d\ge \frac{\sqrt{d}}{5}>\max\{\De^{2\sqrt{\log n}},2\log n\}$, $i\in[5]$.
\end{enumerate}

\medskip
\noindent \textbf{Phase $1$. Embed $T'$ in $G[V_1]$}.
\medskip\\
By property~\ref{p7}, one can similarly apply Theorem~\ref{almost} to $G[V_1]$ to find an embedding $\phi|_{T'}$ inside $V_1$. Indeed, by Theorem~\ref{almost}, it suffices to ensure that $d_1\ge 2 \De$ and the order of $T'$ is at most $|V_1| - 4\Delta \lceil\frac{|V_1|}{2d_1}\rceil$, which easily follows since
$|V_1| -|T'|=21\De m >4\De\left\lceil\frac{|V_1|}{2d_1}\right\rceil.$

\medskip
\noindent \textbf{Phase $2$. Embed $C$ into $V_2$}.
\medskip\\
We first build a maximal matching $M$ between $\phi(R)$ and $V_2$. Denote by $R', C'$ the sets of vertices in $\phi(R)$ and $V_2$, respectively, that are not covered by $M$. Since $G$ is $m$-joined, we have $|R'|=|C'|< m$.
Using the same strategy as in Section~\ref{sec5.2}, we shall use Corollary~\ref{rec} (twice) to finish the embedding of $C$. Without loss of generality, we may write $R'=\{\phi(c_1),\ldots,\phi(c_{m-1})\}$, $B':=\{\phi(b_1),\ldots,\phi(b_{m-1})\}$ and $Z':=\{\phi(z_1),\ldots,\phi(z_{m-1})\}$. Let $\mathcal{I}$ be a family of $m-1$ disjoint pairs obtained by arbitrarily pairing vertices between $A':=\{\phi(a_1),\ldots,\phi(a_{m-1})\}$ and $C'$ (see Figure~\ref{fig5}).
Since \[|V_4|\ge \frac{n}{\sqrt{d}} ~ \text{and}~ |\mathcal{I}|< m,\] by property~\ref{p7} and applying Corollary~\ref{rec} with $s=h$ and $W=V_4$, we obtain a $(V_4,\mathcal{I},h,\Delta)$-tree array. Using the pruning-rebuilding technique as in \ref{prun}\ref{rebuild}, one can find an adjustment $\phi'$ of $\phi|_{T'}$ which maps every $(a_i,c_i)$-path in $T$ ($i\in[m-1]$) and the corresponding subtrees attached (the red parts in Figure~\ref{fig5}) into $V_4$, where the set $\{z_1,z_2,\ldots,z_{m-1}\}$ is instead mapped to a subset of $V_4$, say $D'$.  However, the current $\phi'$ may fail to map the other segments $P_i(z_i,b_i), i\in[m-1]$ (the purple parts) into paths in $G$. To overcome this, we use Corollary~\ref{rec} again to rebuild the paths $P_i(z_i,b_i), i\in[m-1]$ together with the corresponding subtrees attached. This is similarly done by applying Corollary~\ref{rec} with $\mathcal{I}=\{(\phi'(z_i),\phi'(b_i)): i\in[m-1]\}$ and $W=V_5$, and the resulting embedding, denoted by $\phi|_{T-C^-}$, manages to map $C$ into $V_2$.

\medskip
\noindent \textbf{Phase $3$. Embed $C^-$ via a star-matching}.
\medskip\\
Now it remains to embed $C^-$ into the remaining set of vertices in $V(G)$, say $L$. Note that $V_3\<L$ and by property~\ref{p7} it holds that for every $X\< V_2$ with $|X|\le m<\frac{5n}{2d}=\frac{|V_3|}{2d_3}$,
\[|N_G(X,L)|\ge|N_G(X,V_3)|\ge d_3|X|\ge \De|X|\] and similarly for every $Y\< L$ with $|Y|\le m$,
$|N_G(Y,V_2)|\ge d_2|Y|\ge \De|Y|$.
Lemma~\ref{star1} applied to $G$ with $(V_2,L)$ in place of $(A,B)$, gives a star-matching in which every $v\in V_2$ would be connected to $d_{T}(\phi^{-1}(v))-1$ vertices in $L$, and this combined with $\phi|_{T-C^-}$ completes an embedding of $T$.

\subsection{Tree array}
We first introduce useful tools for the proof of Lemma~\ref{rec1}.
Glebov, Johannsen and Krivelevich \cite{Glebov13} recently modified Haxell's method~\cite{Haxell01} to develop a very flexible approach for embedding bounded degree trees. We first need a key notion of $(d, m)$-extendable subgraph.

\begin{definition}\cite[$(d,m)$-extendable]{Glebov13}\label{extendable}
  Let $d,m\in \N$ satisfy $m\geq 1$ and $d\geq 3$, let $G$ be a graph, and let $S\< G$ be a subgraph. We say that $S$ is \emph{$(d,m)$-extendable} if $S$ has maximum degree at most $d$ and
\[
|\Gamma_G(X)\setminus V(S)|\geq(d-1)|X|-\sum_{x\in X\cap V(S)}(d(x,S)-1)
\]
for all sets $X\<V(G)$ with $|X|\leq 2m$.
\end{definition}
The following results of Montgomery~\cite{Montgomery19} give sufficient conditions under which a $(d,m)$-extendable subgraph can be extended yet remain $(d,m)$-extendable. This allows us to build up a collection of trees rooted at a given set of vertices.

\begin{lemma}\cite[Corollary 3.12]{Montgomery19}\label{extendpath}
Let $d, m, \ell\in \N$ satisfy $m \ge 1$ and $d \ge3$. Letting $k =\lceil\frac{\log(2m)}{\log(d -
1)}\rceil$, suppose $\ell \ge 2k+1$. Let $G$ be an $m$-joined graph and $S$ be a $(d, m)$-extendable subgraph
of $G$ with at most $|G| - 10dm - (\ell - 2k - 1)$ vertices.
Suppose $a$ and $b$ are two distinct vertices in $S$, both with degree at most $d/2$ in S.
Then there is an $(a, b)$-path P of length $\ell$ with internal vertices outside $S$, such that
$S\cup P$ is $(d, m)$-extendable.
\end{lemma}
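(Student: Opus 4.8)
The plan is to realise $P$ as the concatenation of two \emph{brooms}, one grown from $a$ and one from $b$, spliced by a single edge supplied by the $m$-joinedness of $G$; here a broom is a pendant bare path of length $\alpha$ (resp.\ $\beta$) ending at a vertex $u$ (resp.\ $v$), followed by a $(d-1)$-ary tree of height $k$ rooted at $u$ (resp.\ $v$). The engine is the elementary one-vertex extension step for $(d,m)$-extendable subgraphs: if $S'$ is $(d,m)$-extendable in $G$, $v\in V(S')$ has $d(v,S')<d$, and $|V(S')|$ is at most $|G|$ minus a suitable absolute-constant multiple of $dm$, then some $w\in\Gamma_G(v)\setminus V(S')$ has $S'+vw$ again $(d,m)$-extendable. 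This is standard (cf.\ \cite{Glebov13,Montgomery19}): applying $(d,m)$-extendability of $S'$ to $\{v\}$ shows $v$ has at least $d-d(v,S')\ge 1$ neighbours outside $V(S')$, while the neighbours $w$ for which $S'+vw$ fails are few, since each failure is certified by a set of size at most $2m$, these certificates may be taken nested, and their number is then bounded by applying the $(d,m)$-extendability of $S'$ to their union. Iterating, from any vertex of degree less than $d$ in a $(d,m)$-extendable subgraph one may attach a pendant bare path of any prescribed length and, at its far end, a $(d-1)$-ary tree of any prescribed height, while preserving $(d,m)$-extendability, as long as the running vertex count stays within the allowed room.

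\emph{Step 1 (grow the brooms).} Put $\alpha=\lceil(\ell-2k-1)/2\rceil$ and $\beta=\lfloor(\ell-2k-1)/2\rfloor$, both nonnegative since $\ell\ge 2k+1$. Starting from $S$, at $a$ (which has $d(a,S)\le d/2<d$, so the first extension has room to spare) attach a pendant bare path of length $\alpha$ ending at $u$, then a $(d-1)$-ary tree $T_u$ of height $k$ rooted at $u$; let $S_1$ be the resulting $(d,m)$-extendable subgraph. Since every vertex added lies outside $V(S)$, $b$ still has $d(b,S_1)\le d/2$, so we likewise attach to $b$ a pendant bare path of length $\beta$ ending at $v$ and then a $(d-1)$-ary tree $T_v$ of height $k$ rooted at $v$, getting a $(d,m)$-extendable $S_2$. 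By the choice $k=\lceil\log(2m)/\log(d-1)\rceil$ each of $T_u,T_v$ has at least $(d-1)^k\ge 2m$ leaves and fewer than $4dm$ vertices, so the construction adds fewer than $8dm+(\ell-2k-1)$ vertices in all; the hypothesis $|V(S)|\le|G|-10dm-(\ell-2k-1)$ is exactly the slack that keeps the running count at most $|G|-2dm$, which suffices for every extension step. (For the at most two boundary values of $\ell$ forcing $\alpha=0$ or $\beta=0$, a broom degenerates to a tree attached at $a$ or $b$ directly; one then uses the slack $d/2$ in $d(a,S),d(b,S)$ via a tree of slightly larger arity and height on whichever side can afford it, a routine case check we suppress.)

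\emph{Step 2 (splice and cut).} Let $L_u,L_v$ be the leaf sets of $T_u,T_v$; they are disjoint and each has size at least $m$, so by $m$-joinedness of $G$ there is an edge $xy$ with $x\in L_u$, $y\in L_v$. Adding an edge between two existing vertices of a $(d,m)$-extendable subgraph keeps it $(d,m)$-extendable (it only raises two degrees, which can only relax the defining inequality) and keeps the maximum degree at most $d$ (here $x,y$ had degree $1$), so $S_2+xy$ is $(d,m)$-extendable. Let $P$ be the $(a,b)$-path inside $S_2+xy$ made of the pendant path from $a$ to $u$, the root-to-$x$ branch of $T_u$, the edge $xy$, the $y$-to-root branch of $T_v$, and the pendant path from $v$ to $b$; it has length $\alpha+k+1+k+\beta=\ell$ and all its internal vertices lie outside $S$. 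Finally delete from $S_2+xy$, one vertex at a time, every vertex of $T_u\cup T_v$ not on $P$, each time removing a vertex of current degree at most $1$ (an exposed leaf of the shrinking unused part). Removing a vertex $w$ of degree at most $1$ from a $(d,m)$-extendable subgraph preserves $(d,m)$-extendability: for any test set $X$, the required inequality for the smaller subgraph follows from that for $X\setminus\{w\}$ in the original subgraph, the only possibly lost term on the right being offset by $w$ reappearing on the left whenever $w\in\Gamma_G(X)$, and otherwise not being needed. After all deletions the surviving subgraph is exactly $S\cup P$, which is therefore $(d,m)$-extendable, as required.

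\emph{Main obstacle.} The genuine content is the one-vertex extension step invoked in Step~1 — controlling the ``bad'' neighbours of a low-degree vertex by nested certifying sets together with the $(d,m)$-extendability of the current subgraph — and it is precisely this that dictates the parameters: a $(d-1)$-ary tree of height $k=\lceil\log(2m)/\log(d-1)\rceil$ has at least $2m$ leaves, enough to hand off to $m$-joinedness, which forces $\ell\ge 2k+1$ and the $10dm$ room loss. The preservation-under-deletion fact in Step~2 is the only other point needing care, but it is a short case analysis as indicated.
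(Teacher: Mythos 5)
This lemma is only quoted in the paper (it is Corollary 3.12 of Montgomery~\cite{Montgomery19}, used here as a black box), so there is no in-paper proof to compare against; your argument is essentially the standard one from that source: grow a bare path and $(d-1)$-ary trees of height $k$ by iterated single-leaf extensions of a $(d,m)$-extendable subgraph, splice the two disjoint leaf sets (each of size at least $m$) with an edge supplied by $m$-joinedness, and trim the unused tree vertices one degree-$\le 1$ vertex at a time, which, as you correctly verify, preserves $(d,m)$-extendability. The outline is correct; the only points left implicit are the single-vertex extension lemma itself (legitimately cited from the same machinery, and its room requirement of roughly $(2d+3)m$ is comfortably met once the tree sizes are bounded slightly more carefully than your ``$<4dm$'' each) and the boundary case $\ell\in\{2k+1,2k+2\}$, where a tree must be rooted at $a$ or $b$ directly: this is precisely where the hypothesis $d(a,S),d(b,S)\le d/2$ is used, since a root with $\lfloor d/2\rfloor$ children and $(d-1)$-ary levels below still has at least $\lfloor d/2\rfloor(d-1)^{k-1}\ge m$ leaves, so the suppressed case does go through.
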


\begin{lemma}\cite[Corollary 3.7]{Montgomery19}\label{extendtree}
Let $d,m\in\N$ satisfy $m \geq 1$, $d \geq 3$ and $T$ be a tree with maximum
degree at most $d-1$, which contains a vertex $t \in V (T)$. Let $G$ be an $m$-joined graph and
suppose $S$ is a $(d, m)$-extendable subgraph of $G$ with maximum degree $d$. Let $r\in V(S)$
and suppose \[|S|+|T| \le |G|-2dm-3m.\] Then, there is a copy $T'$ of $T$ in $G-(V(S)\setminus\{r\})$,
in which $t$ is mapped to $r$, such that $S \cup T'$ is $(d, m)$-extendable in $G$.
\end{lemma}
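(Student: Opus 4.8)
The plan is to reduce Lemma~\ref{extendtree}, by an induction on $|V(T)|$ via leaf-deletion, to a \emph{single-vertex extension lemma}, and then to prove that lemma directly from the definition of $(d,m)$-extendability together with the $m$-joinedness of $G$. The single-vertex extension lemma I would establish is the following: if $S$ is a $(d,m)$-extendable subgraph of $G$ with maximum degree at most $d$, if $v\in V(S)$ satisfies $d(v,S)\le d-1$, and if $|V(S)|\le |V(G)|-2dm-3m$, then there is a vertex $w\in\Gamma_G(v)\setminus V(S)$ such that $S+vw$ (the subgraph obtained by adding the new vertex $w$ together with the edge $vw$) is again $(d,m)$-extendable.

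To prove this extension lemma I would argue by contradiction. First, applying the extendability inequality of $S$ to $X=\{v\}$ already gives $|\Gamma_G(v)\setminus V(S)|\ge(d-1)-(d(v,S)-1)=d-d(v,S)\ge 1$, so at least one candidate $w$ exists. Suppose every candidate $w\in\Gamma_G(v)\setminus V(S)$ is bad, i.e.\ $S+vw$ is not $(d,m)$-extendable; as $S+vw$ still has maximum degree at most $d$, the failure is in the defining inequality, witnessed by some $X_w$ with $|X_w|\le 2m$. Comparing that inequality with the one for $S$ and cancelling the common terms — the only changes between $S$ and $S+vw$ are that $w$ may drop out of the external neighbourhood of $X_w$, and that the degree of $v$ increases by one precisely when $v\in X_w$ — one is forced to conclude that $X_w$ is \emph{tight} for $S$ (equality holds in the extendability inequality at $X_w$), that $v\notin X_w$, and that $w$ has a neighbour in $X_w$.

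The heart of the argument is then the observation that, under the size hypothesis, $G$ being $m$-joined forbids tight sets of intermediate size: if $X$ were tight with $m\le|X|\le 2m$, then $|\Gamma_G(X)\setminus V(S)|\le(d-1)|X|$, so the set $V(G)\setminus(X\cup\Gamma_G(X))$ would have size at least $|V(G)|-|V(S)|-2dm\ge m$, while being disjoint from $X$ and joined to it by no edge — contradicting $m$-joinedness. Combined with the standard submodularity of the set function $Y\mapsto|\Gamma_G(Y)\setminus V(S)|-(d-1)|Y|+\sum_{y\in Y\cap V(S)}(d(y,S)-1)$ (a monotone coverage function minus a modular one), this forces any union of tight sets that stays below size $2m$ to again be tight, and hence to have size strictly less than $m$; in particular $X^*:=\bigcup_w X_w$, taken over all bad $w$, is tight with $|X^*|<m$. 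Now adjoin $v$: every bad $w$ — hence, by assumption, every $w\in\Gamma_G(v)\setminus V(S)$ — already lies in $\Gamma_G(X^*)$, so the external neighbourhood of $X^*\cup\{v\}$ equals that of $X^*$, whereas the right-hand side of the extendability inequality grows by $d-d(v,S)\ge 1$; thus the inequality fails at $X^*\cup\{v\}$, a set of size at most $m\le 2m$, contradicting the $(d,m)$-extendability of $S$. This would complete the proof of the extension lemma.

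Given the extension lemma, I would finish Lemma~\ref{extendtree} by induction on $|V(T)|$. When $|V(T)|=1$, take $T':=\{r\}$, so $S\cup T'=S$ is $(d,m)$-extendable. Otherwise choose a leaf $\ell\ne t$ of $T$, with $T$-neighbour $p$, apply the induction hypothesis to $T-\ell$ (permissible since $|V(S)|+|V(T-\ell)|<|V(G)|-2dm-3m$, and since the conclusion forces $d(r,S)+d(t,T)\le d$, a condition that passes down to $T-\ell$) to obtain a copy $T''$ of $T-\ell$ inside $G-(V(S)\setminus\{r\})$ with $t\mapsto r$ and $S_1:=S\cup T''$ being $(d,m)$-extendable, and let $q$ be the image of $p$. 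Then $d(q,S_1)=d(q,S)+d(q,T'')\le d-1$ — this is $d(p,T-\ell)\le d-2$ when $p\ne t$, and $d(r,S)+d(t,T)-1\le d-1$ when $p=t$ — and $|V(S_1)|=|V(S)|+|V(T)|-1\le|V(G)|-2dm-3m$, so the extension lemma provides $w\in\Gamma_G(q)\setminus V(S_1)$ with $S_1+qw$ still $(d,m)$-extendable; putting $T':=T''+w$ with $w$ the image of $\ell$ gives the required copy, since $V(T')\cap V(S)=\{r\}$ and $S\cup T'=S_1+qw$. The main obstacle is the extension lemma, and inside it the interplay between the size slack and the $m$-joinedness of $G$ that rules out medium-sized tight sets; the remaining steps — the term-by-term comparison of two extendability inequalities and the submodularity bookkeeping — are routine but must track carefully which vertices belong to $V(S)$.
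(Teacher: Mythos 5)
Your proposal is correct, and it follows essentially the same route as the source this paper cites for Lemma~\ref{extendtree} (Montgomery's Corollary~3.7, which the paper does not reprove): a single-vertex extension lemma proved by showing every violating set must be tight, using $m$-joinedness to exclude tight sets of size in $[m,2m]$ and submodularity to keep the union of tight sets small, then contradicting extendability by adjoining the attachment vertex, followed by a leaf-by-leaf induction over $T$. Your observation that the statement implicitly requires $d_S(r)+d_T(t)\le d$ (otherwise $S\cup T'$ could not have maximum degree at most $d$) is accurate and harmless here, since in all applications in this paper the root satisfies $d_S(r)+\Delta\le d_1$.
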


\begin{proof}[Proof of Lemma~\ref{rec1}]
Given integers $m,d_1,\De\in \mathbb{N}$ as in the statement, we fix $G$ to be an $m$-joined graph and $W\<V(G)$ such that $G$ $d_1$-expands into $W$ with $d_1\ge \De+2$.
Now we build the desired tree array in the following two steps. At the first step, we shall use Lemma~\ref{extendpath} to build pairwise vertex-disjoint paths with internal vertices in $W$, each connecting a prescribed pair of vertices in $\mathcal{I}$.
Let $S$ be an empty graph on vertex set $V(\mathcal{I})$.
\begin{claim}
  The subgraph $S$ is a $(d_1, m)$-extendable subgraph in $G'=G[W\cup V(\mathcal{I})]$.
\end{claim}
\begin{proof}\renewcommand*{\qedsymbol}{$\blacksquare$}
By Definition~\ref{extendable}, it is sufficient to show that for every set $X\subseteq V(G')$ with $|X| \le 2m$, we have \[|\Gamma_{G'}(X)\setminus V(\mathcal{I})| \ge d_1|X|.\] As $N_G(X,W)\<\Gamma_{G'}(X)\setminus V(\mathcal{I})$, it suffices to verify that $|N_G(X,W)| \ge d_1|X|$. This trivially follows since $G$ $d_1$-expands into $W$ and $|X|\le 2m<\frac{|W|}{2d_1}$.
\end{proof}
Therefore, by repeatedly applying Lemma~\ref{extendpath} to $G'$ with $(d_1,s,m)$ in place of $(d,\ell, m)$, we can connect the pairs of vertices from $\mathcal{I}$ by vertex-disjoint paths $P_1, \ldots,P_t$ each of length $s$ in $G'$.
This can be done as in this process the resulting subgraph $S_i:=S\cup P_1\cup\cdots\cup P_{i}$, $i\le t$, would still be a $(d_1, m)$-extendable graph of order at most
\[(s+1)t\le |W|-10d_1m-s.\]

For the second step, let $R:=V(S_t)\setminus V(\mathcal{I})$.
Then we shall repeatedly use Lemma~\ref{extendtree} to build for all vertices $v\in R$, pairwise vertex-disjoint $\Delta$-ary trees $T_v$ of height $s$, where $T_v$ has root $v$ and all other vertices in $W\setminus R$. In fact, in the initial step, we have that \[|S_t|+|T_v|\le (s+1)t+(1+\Delta+\Delta^2+\cdots+\Delta^{s})< (s+1)\De^{s+1}t<|W|-2d_1m-3m.\] For an arbitrary vertex $v\in R$, we apply Lemma~\ref{extendtree} to obtain a desired copy $T_v$ of $\Delta$-ary tree of height $s$ with a root $v$ and in particular we have that $S_t\cup T_v$ is also $(d_1, m)$-extendable in $G[W\cup V(\mathcal{I})]$. We repeat this step for every $v\in R$ iteratively such that the corresponding $\Delta$-ary trees $T_v$ are pairwise vertex disjoint. Note that this can be done as at the end of this process the resulting subgraph $\bigcup_{v\in R}T_v\cup S_t$ would still be $(d_1, m)$-extendable with maximum degree $\De+2\le d_1$ and order at most
\[
|S_t|+|R|(1+\Delta+\Delta^2+\cdots+\Delta^{s})\le t(s+1)\Delta^{s+1}\le |W|-2d_1m-3m.
\]
The family $\{T_v: v\in R\}$ of trees together with the family $\{P_i: i\in[t]\}$ of paths as above form a desired $(W,\mathcal{I},s,\Delta)$-tree array.
\end{proof}

\section{Proof of Theorem~\ref{th2}}

The proof of Theorem~\ref{th2} is divided into two cases depending on whether $T$ has many pendant stars or many vertex-disjoint caterpillars of the same length.
This would accordingly be decoded by the following two results.

\begin{lemma}[Pendant stars]\label{pendant}
For any constant $0<\ga<\frac{1}{2}$, there exists $c>0$ such that the following holds for sufficiently large integer $n$. For all $\Delta\in \mathbb{N}$ with $\Delta \le c\sqrt{n}$, every
$(n, \frac{100}{\ga}\Delta \sqrt{n})$-expander is universal for the trees in $\mathcal{T}(n, \Delta)$ with  at least $\frac{\ga n}{\De}$ pendant stars.
\end{lemma}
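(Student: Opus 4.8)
The plan is to set aside $\lceil\ga n/\De\rceil$ pendant stars, embed the bulk of $T$ with the almost-spanning tree result, and complete the embedding with two star-matchings. Put $d=\frac{100}{\ga}\De\sqrt n$, $m=\lceil n/(2d)\rceil$; fix an $(n,d)$-expander $G$ (so $G$ is $m$-joined), take $c$ small in terms of $\ga$ and $n$ large. Given $T\in\mathcal T(n,\De)$ with at least $\ga n/\De$ pendant stars, first choose a set $\mathcal S$ of exactly $s:=\lceil\ga n/\De\rceil$ of them. For $S\in\mathcal S$ let $c_S$ denote its center (a leaf of $T':=T-\{\text{leaves of }T\}$), $r_S$ its root (the unique $T'$-neighbour of $c_S$), and set $B:=\{c_S:S\in\mathcal S\}$, $A:=\{r_S:S\in\mathcal S\}$, let $C$ be the set of all legs of the stars in $\mathcal S$, and $T^\ast:=T-(B\cup C)$. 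For $n$ large the hypothesis forces $|V(T')|\to\infty$, so degenerate configurations do not occur; in particular each $r_S$ is a non-leaf of $T'$, the $c_S$ are pairwise non-adjacent, $A\cap B=\varnothing$, $A\subseteq V(T^\ast)$, and $T^\ast$ is a subtree of $T$ (deleting $C$ makes each $c_S$ a leaf, and deleting these leaves preserves connectivity). As every pendant star has at least one leg, $s\le|C|\le(\De-1)s$ and $|T^\ast|=n-|B|-|C|\ge(1-2\ga)n$.

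I would then apply Lemma~\ref{randompart} with $W=V(G)$ to partition $V(G)=V_1\cup V_2\cup V_3$ with $|V_1|=|T^\ast|+22\De m$, $|V_2|=|B|+m$ and $|V_3|=|C|-22\De m-m$ (these sum to $n$). From $|T^\ast|\ge(1-2\ga)n$ and $|C|\ge s\ge\ga n/\De$, together with $23\De m=O(\sqrt n)$, routine estimates give $|V_i|>0$ and $d_i:=\frac{|V_i|}{5n}d\ge 2\log n$ for each $i$ (in fact $d_1\ge\frac{(1-2\ga)d}{5}$, $d_2\ge 20\sqrt n$, $d_3\ge 10\sqrt n$), so Lemma~\ref{randompart} produces a partition with $G$ $d_i$-expanding into $V_i$. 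In particular $G[V_1]$ is an $(|V_1|,d_1)$-expander with $d_1\ge 2\De$; since $|V_1|/2d_1=5n/2d\le 5m$ we have $4\De\lceil|V_1|/2d_1\rceil\le 20\De m<22\De m=|V_1|-|T^\ast|$, so Theorem~\ref{almost} yields an embedding $\phi|_{T^\ast}$ of $T^\ast$ into $V_1$, leaving exactly $22\De m$ vertices of $V_1$ uncovered.

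To embed $B$, set $f(\phi(a)):=|\{S\in\mathcal S:r_S=a\}|\in\{1,\dots,\De\}$ for $a\in A$, so that $\sum_{a\in A}f(\phi(a))=s=|V_2|-m$. I would find vertex-disjoint stars in $G$, one centered at each vertex of $\phi(A)$, the star at $\phi(a)$ having $f(\phi(a))$ leaves inside $V_2$. By (the easy direction of) Hall's theorem, applied to the bipartite graph obtained by replacing each $\phi(a)$ with $f(\phi(a))$ copies, it suffices that $|N_G(X,V_2)|\ge\sum_{a:\phi(a)\in X}f(\phi(a))$ for every $X\subseteq\phi(A)$: for $|X|\le m$ this is $|N_G(X,V_2)|\ge d_2|X|\ge\De|X|$, and for $|X|>m$ it follows from $m$-joinedness (if $V_2$ had $\ge m$ vertices with no neighbour in $X\subseteq V_1$, there would be no edge between two disjoint $m$-sets), which gives $|N_G(X,V_2)|>|V_2|-m=s$. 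Assigning $c_S$ to a leaf of the $\phi(r_S)$-star (one per $S$ with that root) embeds $B$ into $V_2$ with $\phi(B)\subseteq V_2$ and $|\phi(B)|=s$, so precisely $m$ vertices of $V_2$ stay free. Now let $L:=V(G)\setminus\phi(T^\ast\cup B)$, so $|L|=|C|$ and $V_3\subseteq L$. To embed the legs, set $g(\phi(c_S)):=d_T(c_S)-1\in\{1,\dots,\De-1\}$, so $\sum g=|C|=|L|$, and apply Lemma~\ref{star1} with $(A,B)=(\phi(B),L)$ and parameters $(\De,m)$: condition~(2) is $m$-joinedness; for~(1), $V_3\subseteq L$ gives $|N_G(X,L)|\ge d_3|X|\ge\De|X|$ for $X\subseteq\phi(B)$ with $|X|\le m$, and $\phi(B)$ misses only $m$ vertices of $V_2$ so $|N_G(Y,\phi(B))|\ge d_2|Y|-m\ge\De|Y|$ for $Y\subseteq L$ with $|Y|\le m$ (using $d_2\ge\De+m$). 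The resulting $g$-matching places the legs of each $S$ next to $\phi(c_S)$; combining this with $\phi|_{T^\ast}$ and $\phi|_B$ gives an embedding of $T$ into $G$, since $V_1,V_2,L$ are disjoint and every edge of $T$ lies in $T^\ast$, equals some $\{r_S,c_S\}$, or equals $\{c_S,\ell\}$ for a leg $\ell$ of $S$.

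The step I expect to be the crux is the embedding of $B$: since $\phi(A)$ is an uncontrolled image-set rather than a block with good expansion, neither Lemma~\ref{star} nor Lemma~\ref{star1} can be applied to it directly. The device that makes it work is to enlarge $V_2$ by $m$ vertices beforehand, so that one only needs to \emph{cover} $\phi(A)$ --- a one-sided Hall condition supplied by $m$-joinedness --- while keeping $\phi(B)$ entirely inside $V_2$, so that the final leaf-embedding is a star-matching between a sub-block of $V_2$ and $V_3$ together with the leftover, where genuine two-sided block expansion is available. A secondary point worth noting is that reserving only $s=\lceil\ga n/\De\rceil$ pendant stars (rather than all of them) keeps $|T^\ast|\ge(1-2\ga)n$, which is precisely what forces every $d_i$ above $2\log n$ so that Lemma~\ref{randompart} applies uniformly in $\De$.
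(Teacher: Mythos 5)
Your proof is correct and achieves the same result, and at heart the decomposition is identical to the paper's: split $T$ into $T^\ast$, the pendant-star centres $B$, and the legs $C$; embed $T^\ast$ with Theorem~\ref{almost} into a large block; embed $B$ by a star-matching into a small block with good expansion; finish the legs with a two-block star-matching. The one place where your implementation genuinely diverges from the paper's written proof of Lemma~\ref{pendant} is Phase~2. The paper uses a \emph{four}-part partition $V_1,\dots,V_4$ with $|V_2|=|B|$ exactly, builds a \emph{maximal} star-matching from $\phi(A)$ into $V_2$, proves via $m$-joinedness that fewer than $m$ roots are left uncovered, greedily rescues those using a fresh block $V_3$ (and $V_4$ supplies one-sided expansion so that Lemma~\ref{star} can be invoked at the end). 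You instead use a \emph{three}-part partition and pad $V_2$ to size $|B|+m$, so that a single Hall argument (copies of $\phi(a)$, one-sided condition supplied by $m$-joinedness for large $X$) produces the whole star-matching covering $\phi(A)$ at once, after which Lemma~\ref{star1} with two-sided expansion from $V_2$ and $V_3$ finishes. Interestingly, your version is precisely the ``quick solution'' sketched in the paper's \emph{outline} section (enlarge $V_2$ by $m$ vertices so the Hall condition need only cover $\phi(A)$), whereas the actual proof of the lemma uses the maximal-matching-plus-patch variant; both are valid and need the same quantitative inputs. The one inequality worth flagging because it is easy to state backwards is the verification of $|N_G(Y,\phi(B))|\ge d_2|Y|-m\ge\Delta|Y|$ for $|Y|\le m$ in the final step: your use of $d_2\ge\Delta+m$ (with $|Y|\ge 1$) is exactly what is needed and is justified since $d_2\ge 20\sqrt n$ while $\Delta+m=O(\sqrt n)$ with small constants.
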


\begin{lemma}[Caterpillars]\label{caterpillar}
For any integer $k\ge 800$ there exists $C>0$ such that the following holds for sufficiently large integer $n$. For all $\Delta\in \mathbb{N}$ with $\Delta \le\frac{\sqrt{n}}{2C}$, every
$(n, C\Delta \sqrt{n})$-expander is universal for the trees in $\mathcal{T}(n, \Delta)$ with at least $\frac{ n}{4k\De}$ vertex-disjoint caterpillars each of length $k$.
\end{lemma}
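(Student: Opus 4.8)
The plan is to combine an almost-spanning embedding (Theorem~\ref{almost}), the $d=\Theta(\ell\sqrt n)$ path-cover result Theorem~\ref{pathcover2}, and the two-sided star-matching Lemma~\ref{star1}, following the bare-path strategy of Case~A and the caterpillar strategy of Case~B in the proof of Theorem~\ref{th1+}, but with the expensive tree-array reconstruction of Lemma~\ref{rec1} replaced by the cheap ``enlargement trick'' of the outline. Throughout fix $d=C\De\sqrt n$ with $C=C(k)$ a large constant, $m=\lceil n/(2d)\rceil$ so that the $(n,d)$-expander $G$ is $m$-joined, and let $T\in\mathcal T(n,\De)$ carry vertex-disjoint caterpillars $Q_1,\dots,Q_r$ of central length $k$, $r\ge n/(4k\De)$. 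Split into two cases according to whether at least $r/2$ of the $Q_i$ are bare paths in $T$ or at least $r/2$ of them carry a leg.

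In the bare-path case I would run Case~A of the proof of Theorem~\ref{th1+} verbatim, only replacing Theorem~\ref{pathcover} by Theorem~\ref{pathcover2}: set $T'=T$ minus the interiors of $Q_1,\dots,Q_{\lceil r/2\rceil}$, use Lemma~\ref{randompart} to split $V(G)=V_1\cup V_2$ with $|V_1|=|T'|+22\De m$ and $|V_2|=(k-1)\lceil r/2\rceil-22\De m$ (both $\gg n/\sqrt d$, so $d_i=\tfrac{|V_i|}{5n}d\ge\tfrac{\sqrt d}{5}$), embed $T'$ into $G[V_1]$ by Theorem~\ref{almost}, and join the images of the end-pairs of the $Q_i$ by vertex-disjoint paths of length $k$ covering $V_2':=V_2\cup(V_1\setminus\phi(T'))$ via Theorem~\ref{pathcover2} with $\ell=k+1$; since $|V_2'|\le 2|V_2|$ the graph $G$ still $\tfrac{d_2}{2}$-expands into $V_2'$, and $\tfrac{d_2}{2}$ dwarfs the $\Theta_k(\sqrt n)$ demanded by Theorem~\ref{pathcover2} once $C$ is large in terms of $k$.

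The substantive case is when at least $r/2$ of the $Q_i$ carry a leg. Reversing some of them and passing to sub-caterpillars of central length $k':=\lfloor k/2\rfloor\ge 400$, I may assume $Q_1,\dots,Q_s$ ($s=\lfloor r/2\rfloor$) are written $a_0^i a_1^i\cdots a_{k'}^i$ with $a_1^i$ carrying a leg; write $A_j=\{a_j^i:i\in[s]\}$, let $L$ be the legs attached to $\bigcup_{1\le j\le k'-1}A_j$, $L^+=N_T(L)\supseteq A_1$, and $T'=T-(\bigcup_{1\le j\le k'-1}A_j\cup L)$, so $a_0^i,a_{k'}^i\in T'$. The embedding then proceeds in four phases, mirroring Phases~0--4 of Case~B of Theorem~\ref{th1+} but with every tree-array step deleted. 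First, Lemma~\ref{randompart} splits $V(G)$ into $V_1,V_2,V_3,V_4$ with $|V_2|=|A_1|+m$, $|V_4|=(k'-2)|A_1|$, $|V_3|=|L|-m-22\De m$ and $|V_1|=|T'|+22\De m$, each large enough that $G$ $d_i$-expands into $V_i$ with $d_i=\tfrac{|V_i|}{5n}d$ huge. Second, $T'$ is embedded into $G[V_1]$ by Theorem~\ref{almost}. Third, the enlargement trick: since $|V_2|=|\phi(A_0)|+m$, Hall's condition holds for $G[\phi(A_0),V_2]$ --- for $|X|\le m$ by $d_2$-expansion, for $|X|>m$ by $m$-joinedness using $|V_2|-m=|\phi(A_0)|$ --- so a matching saturating $\phi(A_0)$ exists, and declaring $\phi(a_1^i)$ to be the partner of $\phi(a_0^i)$ embeds $A_1$ into $V_2$ leaving exactly $m$ vertices $E_2\subseteq V_2$ unused. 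Fourth, Theorem~\ref{pathcover2}, applied to $G[\phi(A_1)\cup\phi(A_{k'})\cup V_4]$ (which has $k'|A_1|=\ell s$ vertices) with the $s$ pairs $(\phi(a_1^i),\phi(a_{k'}^i))$, $W=V_4$ and $\ell=k'$, embeds $\bigcup_{2\le j\le k'-1}A_j$ onto $V_4$. Finally, with $R:=V(G)\setminus\phi(T-L)$ (so $|R|=|L|$) and $f\colon\phi(L^+)\to\{1,\dots,\De\}$, $f(v):=d_T(\phi^{-1}(v))-2$ (hence $\sum f=|R|$), Lemma~\ref{star1} applied to $(\phi(L^+),R)$ completes the embedding: its hypothesis (2) is $m$-joinedness, and hypothesis (1) follows since $V_3\subseteq R$ gives $|N_G(X,R)|\ge d_3|X|\ge\De|X|$ for $X\subseteq\phi(L^+)$, $|X|\le m$, while $\phi(L^+)\supseteq V_2\setminus E_2$ gives $|N_G(Y,\phi(L^+))|\ge d_2|Y|-m\ge\De|Y|$ for $Y\subseteq R$, $|Y|\le m$.

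The main obstacle --- the ``technical work'' of the outline --- is the interplay of the third and last phases: one must verify that replacing the unavailable perfect matching $\phi(A_0)\leftrightarrow V_2$ by one merely saturating $\phi(A_0)$, at the cost of the $m$ leftover vertices $E_2\subseteq V_2$, still leaves $\phi(L^+)$ with two-sided expansion strong enough for Lemma~\ref{star1}; this amounts to $d_2-\De\ge m$, i.e.\ $m=o\bigl(\tfrac{d}{n}|A_1|\bigr)$, which (with $\De\le c\sqrt n$ and $|A_1|=\Theta(n/(k\De))$, so $m=\Theta(\sqrt n/\De)$ and $\tfrac{d}{n}|A_1|=\Theta(\sqrt n/k)$) is precisely why the threshold is $d=\Theta(\De\sqrt n)$ and why $C$ must be chosen large in terms of $k$. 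The remaining part is the routine arithmetic of the partition phase: checking that the four blocks exceed the thresholds of Lemma~\ref{randompart}, Theorem~\ref{almost} and Theorem~\ref{pathcover2} (using $|A_1|\ge\lfloor r/2\rfloor$, $|T'|\ge 2s$, and $22\De m=11\sqrt n/C$), and that $|T'|+22\De m\le n$ so that the partition exists.
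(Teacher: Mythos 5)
Your proof is essentially the paper's own: the same split into bare and non-bare subcases, the same phase structure (random partition via Lemma~\ref{randompart}, Theorem~\ref{almost} for $T'$, a matching to place $\phi(A_1)$ in a block with expansion, Theorem~\ref{pathcover2} for the central paths, and a final star-matching for the legs), and the same quantitative bottleneck $d_2-\De\ge m$ forcing $d=\Theta(\De\sqrt n)$. The only difference is the repair of the imperfect matching between $\phi(A_0)$ and $V_2$: you enlarge $V_2$ by $m$ spare vertices (the outline's ``enlargement trick'') and invoke Lemma~\ref{star1}, whereas the paper keeps $|V_2|=|A_1|$, takes a maximum matching, and swaps the unmatched vertices into $V_3$ before invoking Lemma~\ref{star} --- an immaterial variant.
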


\begin{proof}[Proof of Theorem~\ref{th2}]
Take $k=800$, and choose $\frac{1}{n}\ll c\ll\frac{1}{C}\ll\frac{1}{k}$. Let $G$ be an $(n,d)$-expander with $d\ge C\De\sqrt{n}$, where $\De\le c\sqrt{n}$. By Corollary~\ref{newdiv}, every $T\in \mathcal{T}(n, \Delta)$ has either at least $\frac{n}{4k\De}$ pendant stars or $\frac{n}{4k\De}$ vertex-disjoint caterpillars each of length $k$. For the former case, by applying Lemma~\ref{pendant} with $\ga=\frac{1}{4k}$, one can find a copy of $T$ in $G$. For the latter case, as $c\ll \frac{1}{C}$, Lemma~\ref{caterpillar} immediately gives a copy of $T$ in $G$.
\end{proof}
\subsection{Many pendant stars: proof of Lemma~\ref{pendant}}
Choose $\frac{1}{n}\ll c\ll\ga$ and let $T$ be an $n$-vertex tree with maximum degree $\De\le c\sqrt{n}$. Let $S_1,S_2,\ldots,S_{\frac{\ga n}{\De}}$ be a collection of pendant stars in $T$, and $A$ be the set of roots of the pendant stars in the collection, where for every $v\in A$, we denote by $s(v)$ the number of pendant stars in the collection that are rooted at $v$. Denote by $B=\{b_1,b_2,\ldots,b_{\frac{\ga n}{\De}}\}$ the set of centers of the pendant stars in the collection and by $C$ the set of leaves attached to the vertices in $B$. Thus $\sum_{v\in A}s(v)=|B|$ and $|B|\le|C|<\De |B|$. Let $T':=T-(B\cup C)$.

Recall that $G$ is an $(n,d)$-expander with $d=\frac{100}{\ga}\Delta \sqrt{n}$ and we write $m:=\frac{n}{2d}$. Then our embedding proceeds as follows.

 \medskip
\noindent \textbf{Phase $0$. Partition $V(G)$}.
\medskip\\
We first randomly partition $V(G)$ into four parts $V_1,V_2,V_3,V_4$ such that
\begin{eqnarray}\label{eq6}
|V_1|=n-|B|-|C|+22\De m, ~|V_2|=|B|=\frac{\ga n}{\De}~\text{and}~ |V_3|=|V_4|=\frac{|C|}{2}-11\De m.
\end{eqnarray}
One can easily check that $|V_i|\ge\frac{\ga n}{2\De}-11\De m=\frac{\ga n}{2\De}-\frac{11\sqrt{n}}{200}\ge\frac{\ga n}{3\De}$ as $c\ll \ga$.
Since $G$ is an $(n,d)$-expander with
\begin{eqnarray}\label{eq61}
d=\frac{100}{\ga}\Delta \sqrt{n}~\text{and thus}~d_i:=\frac{|V_i|}{5n}d\ge 6\sqrt{n}>\max\{2\De,2\log n\}~\text{for every}~ i\in [4],
\end{eqnarray}
by Lemma~\ref{randompart}, there exists such a partition with the following property:
\stepcounter{propcounter}
\begin{enumerate}[label = ({\bfseries \Alph{propcounter}\arabic{enumi}})]

       \item\label{p9} $G$ $d_i$-expands into $V_i$ for all $i\in[4]$.
\end{enumerate}

\medskip
\noindent \textbf{Phase $1$. Embed $T'$ in $G[V_1]$}.
\medskip\\
By property~\ref{p9}, we can apply Theorem~\ref{almost} to $G[V_1]$ to find a copy of $T'$ inside $V_1$. Indeed, as $d_1\ge 2 \De$ in \eqref{eq61},  it suffices to ensure that the order of $T'$ is no larger than $|V_1| - 4\Delta \lceil\frac{|V_1|}{2d_1}\rceil$, and this easily follows as $|V_1| - |T'|=22\De m >4\De\left\lceil\frac{|V_1|}{2d_1}\right\rceil$.
The resulting embedding of $T'$ is denoted as $\phi|_{T'}$ and
write $V_1'=V_1\setminus\phi(V(T'))$ for the set of leftover vertices in $V_1$. Then we shall extend $\phi$ as follows. 

\medskip
\noindent \textbf{Phase $2$. Embed $B$ into $V_2\cup V_3$}.
\medskip\\
We first find a maximal star-matching $M_1$ where every star has a center $\phi(v)$ with $v\in A$ and exactly $s(v)$ leaves in $V_2$. Let $A_1=\phi(A)\cap V(M_1), B_1=V_2\cap V(M_1)$ and $A_2=\phi(A)\setminus A_1, V_2'=V_2\setminus B_1$.
\begin{claim}
 $|A_2|< m~\text{and thus}~|V_2'|\le \De m$.
\end{claim}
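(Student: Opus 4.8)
The plan is to produce $M_1$ from a maximum matching in a suitable auxiliary bipartite graph and extract $|A_2|<m$ from the deficiency version of Hall's theorem, rather than from a naive ``maximal matching'' argument. First I would record the easy facts $\sum_{v\in A}s(v)=|B|=|V_2|$ and $s(v)\le\De$ for every $v\in A$ (the centers of the $s(v)$ pendant stars rooted at $v$ are distinct neighbours of $v$ in $T$). Next, form the bipartite graph $H$ on parts $A^{*}$ and $V_2$, where $A^{*}$ arises from $\phi(A)$ by replacing each vertex $\phi(v)$ by $s(v)$ twins, every twin joined in $H$ to $N_G(\phi(v))\cap V_2$; then $|A^{*}|=|V_2|$, and a matching of $H$ saturating all twins of $\phi(v)$ encodes a star in $G$ centered at $\phi(v)$ with exactly $s(v)$ leaves in $V_2$, vertex-disjoint from the stars of the other centers. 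I would take $M$ to be a maximum matching of $H$, let $M_1$ consist of the stars of those centers all of whose twins $M$ saturates (if one insists on $M_1$ being maximal, extend it arbitrarily — this only shrinks $A_2$), and set $A_2=\phi(A)\setminus V(M_1)$, $V_2'=V_2\setminus V(M_1)$ as in the statement.

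The crux is to show $M$ leaves fewer than $m$ twins of $A^{*}$ unsaturated. By the deficiency form of Hall's theorem this count is $\max_{Z\subseteq A^{*}}\big(|Z|-|N_H(Z)|\big)$, and since adjoining the remaining twins of a vertex to $Z$ does not enlarge $N_H(Z)$, the maximum is attained on a union of full twin-classes, say the twins of $\phi(W)$ for some $W\subseteq A$; then $|Z|=\sum_{v\in W}s(v)$ and $N_H(Z)=N_G(\phi(W))\cap V_2$, with $\phi(W)\subseteq V_1$ disjoint from $V_2$. If $1\le|W|<5m$ — noting $|V_2|/(2d_2)=5m$, so the expansion of $G$ into $V_2$ from property~\ref{p9} applies to $\phi(W)$ — then $|N_H(Z)|\ge d_2|W|\ge\De|W|\ge\sum_{v\in W}s(v)=|Z|$ (here $d_2\ge 6\sqrt n>\De$ by~\eqref{eq61}), so $|Z|-|N_H(Z)|\le 0$. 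If instead $|W|\ge 5m$, hence $|\phi(W)|\ge m$, then $V_2\setminus N_G(\phi(W))$ must have fewer than $m$ vertices: otherwise it would contain an $m$-set disjoint from an $m$-subset of $\phi(W)$ with no edge between them, contradicting that the $(n,d)$-expander $G$ is $m$-joined. Thus $|N_H(Z)|>|V_2|-m\ge|Z|-m$, and in both cases $|Z|-|N_H(Z)|<m$.

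To finish, each vertex of $A_2$ has at least one unsaturated twin and distinct centers have disjoint twin-classes, so $|A_2|$ is at most the number of unsaturated twins, which is $<m$; and $|V_2'|=|V_2|-|B_1|=\sum_{v\in A}s(v)-\sum_{\phi(v)\in A_1}s(v)=\sum_{\phi(v)\in A_2}s(v)\le\De|A_2|<\De m$, which is the ``thus'' clause. I expect the one genuine subtlety to be the reduction in the first paragraph: a merely \emph{maximal} (unextendable) family of complete stars need not leave few centers uncovered, because completing a star at $\phi(v)$ calls for $s(v)$ free neighbours inside $V_2'$ rather than a single edge, so one is forced to pass to the twin graph $H$ and use a \emph{maximum} matching there in order for the $m$-joinedness of $G$ to bite. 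The deficiency formula together with the numerical checks ($d_2>\De$, $|V_2|/(2d_2)=5m$, and the $m$-joinedness of $G$) are routine.
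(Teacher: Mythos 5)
Your proof is correct, and it is a genuinely different argument from the paper's. You build the twin-blowup bipartite graph $H$ on $A^*\cup V_2$, invoke the deficiency form of Hall's theorem on a \emph{maximum} matching of $H$, reduce the deficiency to unions of twin-classes, and then split into $|W|<5m$ (one-sided expansion of $G$ into $V_2$) versus $|W|\ge 5m$ ($m$-joinedness). All of this is sound, and the numerical checks $|V_2|/(2d_2)=5m$ and $d_2>\De$ are right.

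However, your closing remark — that ``a merely maximal family of complete stars need not leave few centers uncovered,'' so that one is \emph{forced} to pass to $H$ — is mistaken, and it is exactly the point where your intuition diverges from what the paper actually does. The paper does use a maximal star-matching $M_1$, and the counting goes through because maximality is exploited in a finer way than ``each uncovered center is missing one neighbour.'' Maximality gives $d_G(\phi(v),V_2')\le s(v)-1$ for every $\phi(v)\in A_2$, whence
\[
e(A_2,V_2')\le\sum_{\phi(v)\in A_2}\bigl(s(v)-1\bigr)=|V_2'|-|A_2|=:k .
\]
On the other hand, if $|A_2|\ge m$ then $m$-joinedness forces all but fewer than $m$ vertices of $V_2'$ to have a neighbour in $A_2$, so
\[
e(A_2,V_2')\ge |V_2'|-(m-1)\ge |V_2'|-|A_2|+1=k+1 ,
\]
a contradiction. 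So the maximal-star-matching route works, and is shorter than the deficiency-Hall route: it avoids constructing $H$ and only needs $m$-joinedness, not the expansion of $G$ into $V_2$. Your approach is slightly more robust in spirit (it makes the Hall-type structure explicit, which might generalise more readily), but it is also slightly heavier and uses an extra hypothesis (the expansion bound $d_2\ge\De$) that the claim itself does not require. Either proof settles the claim; just correct the misconception about why a maximum, rather than maximal, object would be needed.
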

Indeed, suppose $|A_2|\ge m$ and thus $|V_2'|=\sum_{v\in \phi^{-1}(A_2)}s(v)=|A_2|+k$ for some $k\ge 0$. Then as $G$ is $m$-joined and $|A_2|\ge m$, we have $\sum_{v\in \phi^{-1}(A_2)}d_G(\phi(v),V_2')=e(A_2,V_2')\ge k+1$. Since $M_1$ is maximal, we have $d_G(\phi(v), V_2')\le s(v)-1$ for every $v\in\phi^{-1}(A_2)$.
Thus, $\sum_{v\in \phi^{-1}(A_2)} d_G(\phi(v), V_2')\le \sum_{v\in \phi^{-1}(A_2)}(s(v)-1)=|V_2'|-|A_2|=k$, a contradiction.

Next we shall greedily build a star-matching $M_2$ between $A_2$ and $V_3$, where every star has a center $\phi(v)\in A_2$ and exactly $s(v)$ leaves in $V_3$. By combining the Hall's condition (see \eqref{eq4}), the existence of such a star-matching easily follows since $G$ $d_3$-expands into $V_3$ with $d_3>\De$ by \eqref{eq61}.
Let $B_2=V_3\cap V(M_2)$ and $V_3'=V_3\setminus B_2$.

\medskip
\noindent \textbf{Phase $3$. Embed $C$ into $R:=V_1'\cup V_2'\cup V_3'\cup V_4$ via a star-matching}
\medskip\\
It remains to build a star-matching between $B_1\cup B_2$ and $R$. To be more precise, we define an auxiliary function $f: B_1\cup B_2\rightarrow [\De]$ by taking $f(v)=d_T(\phi^{-1}(v))-1$, that is, the number of leaves we need to attach to every $v\in B_1\cup B_2$ so as to complete the embedding.
By property~\ref{p9} and \eqref{eq61}, we observe that $G$ $d_4$-expands into $V_4$ for $d_4>\De$
and every $v\in R$ has
\[d(v,B_1\cup B_2)\ge d(v,V_2)-|V_2'|\ge \frac{|V_2|}{5n}d-\De m\overset{\eqref{eq6}}{\ge}\frac{\ga d}{5\De}-\De\frac{n}{2d}\overset{\eqref{eq61}}{>}\frac{n}{2d}=m.\]
Since $G$ is $m$-joined, we can apply Lemma~\ref{star} to $G$ with $(B_1\cup B_2,R,d_4)$ in place of $(A,B,d)$ to obtain a desired $f$-matching, which completes an embedding of $T$.

\subsection{Many caterpillars: proof of Lemma~\ref{caterpillar}}
In this section, we consider the case when $T$ has $\frac{n}{4k\De}$ vertex-disjoint caterpillars of a constant length $k$.

\begin{proof}[Proof of Lemma~\ref{caterpillar}]
Choose $\frac{1}{n}\ll\frac{1}{C}\ll\frac{1}{k}$
 and throughout the proof we have \[m=\frac{n}{2d},~d=C\De\sqrt{n}~\text{and}~\De\le \frac{\sqrt{n}}{2C}.\]
Assume that an $n$-vertex tree $T$ contains a collection of $\frac{n}{4k\De}$ vertex-disjoint caterpillars each containing a central path $P_i, 1\le i\le\frac{n}{4k\De}$, of length $k\ge 800$, where $x_i,y_i$ are the ends of $P_i$. 
In the forthcoming proofs, we shall write $r=\frac{n}{8k\De}$ and consider two subcases as follows:
\begin{enumerate}
  \item [(1)] at least $r$ paths $P_i$ are bare in $T$;
  \item [(2)] at least $r$ paths $P_i$ are internally attached with leaves in $T$;
\end{enumerate}
 For the first case, we may assume that $P_1,P_2,\ldots,P_{r}$ are bare paths in $T$ and let $T'=T-\cup_{i\in[r]}V(P_i-\{x_i,y_i\})$. Then our embedding proceeds by first randomly partitioning $V(G)$ into two parts $V_1,V_2$ such that
\begin{eqnarray}\label{eq62}
|V_1|=|T'|+21\De m~\text{and}~|V_2|=(k-1)r-21\De m.
\end{eqnarray}
We can easily check that $|V_1|\ge \frac{n}{2},|V_2|\ge\frac{n}{16\De}-21\De m\ge\frac{n}{32\De}$ as $d\ge C\De^2$. Note that $G$ is an $(n,d)$-expander and
\begin{eqnarray}\label{eq63}
d_i:=\frac{|V_i|}{5n}d\ge\frac{d}{160\De}=\frac{C}{160}\sqrt{n}\ge\max\{2\De,2\log n\}~\text{ for every }~i\in [2].
\end{eqnarray}
By Lemma~\ref{randompart}, there exists a partition $V_1\cup V_2=V$ such that $G$ $d_i$-expands into $V_i$ for all $i\in[2]$.
Based on this,  we can first apply Theorem~\ref{almost} to $G[V_1]$ to find an embedding of $T'$ inside $V_1$. Indeed, as $d_1\ge 2 \De$, it suffices to ensure that the order of $T'$ is no larger than $|V_1| - 4\Delta \lceil\frac{|V_1|}{2d_1}\rceil$. This easily follows since $|V_1| - |T'|=21\De m >4\De\left\lceil\frac{|V_1|}{2d_1}\right\rceil.$
Let $\phi$ be the resulting embedding of $T'$ and $X,Y$ be the set of images of ends $x_i,y_i$, respectively. Now it remains to embed the bare paths $P_i,i\in[r]$ using $V_2$. By the choice of $\frac{1}{C}\ll\frac{1}{k}$ and the fact that $d_2\ge\frac{C}{160}\sqrt{n}\ge C_{\ref{pathcover2}}k\sqrt{n}$, such a collection of vertex-disjoint paths can be obtained by Lemma~\ref{pathcover2} applied to $G[X\cup Y\cup V_2]$ with $(d_2,k+1,(k+1)r,V_2)$ in place of $(d,\ell,n,W)$, which together with $\phi$ completes the embedding of $T$.

For the second case, the proof strategy is similar to that in Section~\ref{sec5.2}. Without loss of generality, we may assume that $P_1,P_2,\ldots, P_{r}$ are central paths from vertex-disjoint caterpillars in $T$, each with at least one leg. Following the technique in Section~\ref{sec5.2}, we take subpaths $Q_i\< P_i, 1\le i\le r$ such that each $Q_i$ has length $k'=\lfloor k/2\rfloor$ and one can write $Q_i=a^i_0a^i_1\ldots a^i_{k'}$ so that $a^i_1$ is attached with at least one leaf. For every $j\in\{0,1,\ldots,k'\}$, denote by $A_j$ the set of the vertices $a^i_j$ taken over all paths $Q_i, 1\le i\le r$.  Moreover we write 
$L$ for the set of all leaves in $T$ that are attached to the internal vertices of the paths $Q_i,1\le i\le r$. Write $L^+=N_T(L)$. Then $A_1\<L^+$ and $r=|A_j|\le|L|$ for every $j\in\{0,1,\ldots,k'\}$.

Let $T':=T-(\bigcup_{j=1}^{k'-1} A_j\cup L)$.
Now our embedding proceeds as follows.
\medskip\\
\noindent \textbf{Phase $0$. Partition $V(G)$}.
 \medskip\\
 We first randomly partition $V(G)$ into four parts $V_1,V_2,V_3,V_4$ such that
\begin{eqnarray}\label{eq55}
|V_1|=|T'|+21\De m,~|V_2|=|A_1|=r, |V_3|=(k'-2)r~\text{and}~|V_4|=|L|-21\De m.\nonumber
\end{eqnarray}
Observe that for every $i\in[4]$, $|V_i|\ge r-21\De m\ge\frac{n}{16k\De}$ as $d\ge C\De^2$.
Since $G$ is an $(n,d)$-expander and
\begin{eqnarray}\label{eq56}
d_i:=\frac{|V_i|}{5n}d\ge \frac{d}
{80k\De}=\frac{C}{80k}\sqrt{n}>\max\{2m,2\De,2\log n\}~\text{ for every}~ i\in [4],
\end{eqnarray} by Lemma~\ref{randompart}, we can obtain such a partition with the following property:
\stepcounter{propcounter}
\begin{enumerate}[label = ({\bfseries \Alph{propcounter}\arabic{enumi}})]
       \item\label{p44} $G$ $d_i$-expands into $V_i$ for all $i\in[4]$.
\end{enumerate}

\medskip
\noindent \textbf{Phase $1$. Embed $T'$ in $G[V_1]$}.
\medskip\\
By property~\ref{p44}, we can apply Theorem~\ref{almost} to $G[V_1]$ to find a copy of $T'$ inside $V_1$, denoted as $\phi|_{T'}$. Indeed, this easily follows since $d_1\ge 2 \De$ in \eqref{eq56} and $|V_1| -|T'|= 21\De m> 4\De\left\lceil\frac{|V_1|}{2d_1}\right\rceil.$

\medskip
\noindent \textbf{Phase $2$. Embed $A_1$ into $V_2\cup V_3$}.
 \medskip\\
We first build a maximal matching $M_1$ between $\phi(A_0)$ and $V_2$. Denote by $A'_0, A_1'$ the sets of vertices in $\phi(A_0)$ and $V_2$, respectively, that are not covered by $M$. Since $G$ is $m$-joined, we have $|A'_0|=|A'_1|< m$.
Moreover, by property~\ref{p44} and \eqref{eq56}, we have that $d(v,V_3)\ge d_3\ge 2m$. Thus one can easily obtain a matching $M_2$ from $A_0'$ to a subset $B_1\<V_3$ with $|B_1|=|A_0'|$. Let $V_2'=(V_2\setminus A_1')\cup B_1$ and $V_3'=(V_3\setminus B_1)\cup A_0'$. Then $|V_3'\setminus V_3|=|V_2'\setminus V_2|\le m$ and note that $V_2'=\phi(A_1)$. Furthermore, it is easy to verify that $G$ $\frac{d_2}{2}$-expands into $V_2'$ and $\frac{d_3}{2}$-expands into $V_3'$ as $d_i\ge 2m$.

\medskip
\noindent \textbf{Phase $3$. Embed the subpaths $Q_i-a_0^i$ using $V_3'$}
\medskip\\
By the choice of $\frac{1}{C}\ll\frac{1}{k}$ and the fact that $\frac{d_3}{2}\ge\frac{C}{160k}\sqrt{n}\ge C_{\ref{pathcover2}}k\sqrt{n}$, such a collection of vertex-disjoint paths can be obtained by Lemma~\ref{pathcover2} applied to $G[V_2'\cup \phi(A_{k'})\cup V_3']$ with $(\frac{d_3}{2},k'+1,(k'+1)r,V_3')$ in place of $(d,\ell,n,W)$. This yields an embedding $\phi|_{T-L}$.

Hence it remains to embed $L$ into the remaining set $R$ of vertices with $R=V(G)-\phi(T-L)$.

\medskip
\noindent \textbf{Phase $4$. Embed $L$ into the leftovers via a star-matching}.
\medskip\\
Recall that $L^+$ is the neighborhood of $L$ in $T$ and $A_1\<L^+$. Also observe that $V_4\<R$. We shall build a star-matching between $\phi(L^+)$ and $R$. To finish the embedding of $T$, we define an auxiliary function $f:\phi(L^+)\rightarrow [\De]$ by taking $f(v)=d_T(\phi^{-1}(v))-2$, that is, the number of leaves we need to attach to every $v\in \phi(L^+)$.
Then as $G$ $\frac{d_2}{2}$-expands into $V_2'$, we observe that for every $v\in R$
\[d(v,\phi(L^+))\ge d(v,V_2')\ge\frac{d_2}{2}\overset{\eqref{eq56}}{\ge} m.\]
Moreover, as $V_4\<R$ and $G$ $d_4$-expands into $V_4$ with $d_4\overset{\eqref{eq56}}{>}\De$, we obtain that
\[|N(X,R)|\ge |N(X,V_4)|\ge d_4|X|\ge \De|X|~\text{ for every}~X\<\phi(L^+)~\text{with}~ |X|\le m.\]
Applying Lemma~\ref{star} with $(\phi(L^+),R,\De)$ in place of $(A,B,d)$, we have a desired $f$-matching, which together with $\phi|_{T-L}$ forms an embedding of $T$.

\end{proof}
\section{Covering Expanders with paths}

This section is devoted to the proof of Lemma~\ref{pathcover2}. Before that, we first give a result as follows.

\begin{lemma}\label{connect}
Let $m,d_1\in \N$ with $m\ge d_1\ge 2$ and $G$ be a graph that is $m$-joined. Suppose $G$ contains disjoint vertex sets $X$, $Y$ and $U$, with $X=\{x_1,\ldots,x_{2m}\}$, $Y=\{y_1,\ldots, y_{2m}\}$ and $|U|\ge20d_1m$. Then for all integers $k_i$, $i\in[2m]$, satisfying \[\frac{2\log m}{\log d_1}+1\leq k_i\leq \frac{|U|}{2},\] there exists for some $i$ an $(x_i,y_i)$-path of length exactly $k_i$, whose internal vertices lie in $U$.
\end{lemma}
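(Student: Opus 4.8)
The plan is to prove this by a BFS-style expansion argument from both $x_i$ and $y_i$, combined with the $m$-joinedness of $G$ to guarantee that the two growing neighbourhoods eventually touch. First I would set up the ball-growth: starting from a single vertex $v$, define $N_0 = \{v\}$ and $N_{j+1}$ to be a suitably chosen subset of $N_G(N_j) \cap U$ that avoids previously used vertices; as long as $|N_j| < m$ and there is room in $U$, one cannot have $|N_{j+1}| \le |N_j|$ forever, but the point is rather to control the growth so that after about $\frac{\log m}{\log d_1}$ steps one reaches a set of size roughly $m$ inside $U$. The key quantitative input is that $G$ is $m$-joined, which gives expansion of sets of size up to $m$ in the following weak sense: any set $S$ with $|S| < m$ that has fewer than $d_1|S|$ neighbours in $U$ would, combined with an argument using the remaining vertices of $U$, lead to a violation — so I would instead directly track that $|N_{j+1}| \ge d_1 |N_j|$ as long as $|N_j|$ is below $m$ and $U$ has not been exhausted, which holds because a set of size $< m$ together with the large reservoir $U$ (of size $\ge 20 d_1 m$) must have many neighbours in $U$ by $m$-joinedness applied to $N_j$ and appropriate subsets of $U$.

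Concretely, the main steps are: (1) Grow a ball $B_x$ around $x_i$ inside $U$ for exactly $p := \lceil \frac{\log m}{\log d_1} \rceil$ steps, at each step picking a fresh layer of size a factor $d_1$ larger, until the layer $L_x$ at depth $p$ has size $\ge m$; track that the total number of vertices used is at most $\frac{m d_1}{d_1 - 1} \le 2m$, comfortably inside $U$. (2) Do the same from $y_i$ to obtain a depth-$q$ layer $L_y$ of size $\ge m$, using a disjoint portion of $U$ (this is where $|U| \ge 20 d_1 m$ is needed — two disjoint balls plus slack). (3) Choose the depths $p$ and $q$ so that $p + q + 1 = k_i$; here we use the hypothesis $k_i \ge \frac{2\log m}{\log d_1} + 1$ so that both $p, q$ can be taken $\ge \lceil \frac{\log m}{\log d_1}\rceil$, and $k_i \le |U|/2$ so that even after forcing the depths up to $k_i$ we stay inside $U$ — for the "excess" length beyond the minimal $p, q$, simply continue the ball-growth trivially (a path segment), or pad one of the two balls with a bare path of the required extra length through unused vertices of $U$. (4) Since $|L_x|, |L_y| \ge m$ and these two sets lie in $U \cup \{x_i, y_i\}$ and are disjoint, $m$-joinedness yields an edge between $L_x$ and $L_y$; concatenating the two root-to-layer paths through that edge produces an $(x_i, y_i)$-path of length exactly $p + q + 1 = k_i$ with all internal vertices in $U$.

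The main obstacle I anticipate is the bookkeeping in step (3): making the path length \emph{exactly} $k_i$ rather than merely $\ge$ some threshold. One clean way is to first fix $p_0 = q_0 = \lceil \frac{\log m}{\log d_1}\rceil$, so $p_0 + q_0 + 1 \le k_i$, and then absorb the slack $k_i - (p_0 + q_0 + 1)$ by lengthening one of the two sides: replace the direct root-to-layer path on (say) the $x$-side by first walking along a bare path of length $k_i - (p_0+q_0+1)$ through fresh vertices of $U$ (possible since $m$-joinedness, or even just minimum degree considerations inside $U$, lets one greedily extend a path as long as fewer than $m$ vertices are used and $|U|/2 \ge k_i$ leaves room), and only then starting the $d_1$-fold ball growth. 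A second subtlety is ensuring the two balls and the padding path are mutually disjoint and all fit in $U$: the worst case uses at most $2 \cdot 2m + k_i \le 4m + |U|/2 \le |U|$ vertices given $|U| \ge 20 d_1 m \ge 8m$, so there is plenty of room. Finally, I would double-check the degenerate regime $d_1 = 2$ (where $\frac{\log m}{\log d_1} = \log_2 m$) and the boundary $m = d_1$, but these cause no real trouble since the bounds were stated with these cases in mind.
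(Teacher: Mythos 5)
Your high-level plan (grow a tree/ball from $x_i$ inside half of $U$, another from $y_i$ inside the other half, arrange the depths to hit exactly $k_i$, then use $m$-joinedness to find a linking edge between two layers of size $\ge m$) is the same strategy the paper uses — the paper just outsources the tree-growth to Theorem~\ref{almost2} rather than doing a manual BFS. However, your proposal has a genuine gap in the single claim it relies on most heavily, namely that $m$-joinedness forces $|N_{j+1}| \ge d_1 |N_j|$ for sets $N_j$ of size below $m$. That is false: $m$-joinedness is a statement about pairs of disjoint sets of size $\ge m$, and it gives no local expansion for smaller sets. A single vertex could have zero neighbours in $U$ without contradicting $m$-joinedness, so your BFS can die at step one. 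Your parenthetical ``combined with an argument using the remaining vertices of $U$'' gestures at the needed fix but never supplies it.

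This is tied to a second, related omission: the lemma only asserts that \emph{some} index $i$ works, not every index, and this existential quantifier is doing real work. You fix $i$ at the outset and try to prove the statement for that $i$, which is not possible in general — individual $x_i$'s or $y_i$'s may be poorly connected to $U$. The paper's proof handles both issues together. First it cleans $U_1$ (and $U_2$): it picks a maximal $B \subseteq U_1$ with $|B| \le m$ and $|N(B, U_1)| < 2d_1|B|$, and shows, using $m$-joinedness applied to sets of size between $m$ and $2m$, that the leftover $V_1 = U_1 \setminus B$ does have $2d_1$-expansion for all its subsets of size $\le m$. Then it observes that $m$-joinedness applied to $m$-subsets of $X$ forces at least $m+1$ of the $x_i$'s to have $\ge 2d_1$ neighbours in $V_1$; similarly for $Y$ and $V_2$. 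Since $|X| = |Y| = 2m$, pigeonhole gives a single index $j$ for which both $x_j$ and $y_j$ are well-connected, and only then does the two-sided tree growth begin. You would need to add both the cleaning step and the pigeonhole over the $2m$ pairs for your argument to go through; as written, the expansion you invoke does not exist, and there is no mechanism to select a good pair.
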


The proof of Lemma~\ref{connect} can be found in the appendix.

\lem\cite[Bipartite-template]{Montgomery14}\label{flexiblematching}
There is a constant $n_0$, such that for every $n\geq n_0$ with $3|n$, there exists a bipartite graph $H$ on vertex classes $X$ and $Y\cup Z$ with $|X|=n$, $|Y|=|Z|=2n/3$, and maximum degree $40$, so that the following is true. Given any subset $Z'\< Z$ with $|Z'|=n/3$, there is a matching between $X$ and $Y\cup Z'$.
\ma
\subsection{Proof of Theorem~\ref{pathcover2}}
Given a vertex $v\in V(G)$ and an integer $k\in \N$, a $k$-\emph{fan rooted at} $v$ is a subgraph consisting of $k$ triangles which mutually intersect on no other vertices than $v$, where every edge not incident with $v$ is called an \emph{absorber} for $v$. Now we are ready to prove Lemma~\ref{pathcover2}.
\pr
Choose $\frac{1}{n}\ll\frac{1}{C}\ll\frac{1}{\ell}$ and let $G, W$ be given such that $G$ $d$-expands into $W$, where
\begin{eqnarray}\label{eq70}
d=C\ell\sqrt{n},~|W|=\frac{\ell-2}{\ell}n~\text{and}~m=\lceil\frac{|W|}{2d}\rceil<\frac{\sqrt{n}}{C\ell}.
\end{eqnarray}
Then by Definition~\ref{expands} $G$ is $m$-joined.
Let \[X=\{x_1,x_2,\ldots,x_{n/\ell}\}, ~Y=\{y_1,y_2,\ldots,y_{n/\ell}\}~\text{and}~ r=\frac{n}{10^4\ell}.\]
Our goal is to find pairwise vertex-disjoint $(x_i,y_i)$-paths, $i\in[n/\ell]$, each of length $\ell$. Our proof proceeds in the following steps. 

We first randomly partition $W$ into four parts $W_1,W_2,W_3,W_4$ such that
\begin{eqnarray}\label{eq71}
|W_1|=|W_2|=2r~\text{and}~|W_i|= \frac{|W|-4r}{2}\ge \frac{n}{3}~\text{for every}~i\in\{3,4\}.
\end{eqnarray}
Let $d_i=\frac{|W_i|}{5|W|}d$ for every $i\in [4]$. Then $d_i>\frac{2r}{5n}d>2\log n$.
As $G$ $d$-expands into $W$, by Lemma~\ref{randompart}, there exists a partition $\{W_1,\ldots,W_4\}$ so that $G$ $d_i$-expands into $W_i$ for every $i\in[4]$.

\medskip
\noindent \textbf{Phase $1$. Building an absorbing structure}.
\medskip\\
Take subsets $X_1=\{x_1,x_2,\ldots,x_{3r}\},Y_1=\{y_1,y_2,\ldots,y_{3r}\}$ and write $X_2=X\setminus X_1, Y_2=Y\setminus Y_1$.
\begin{claim}\label{absorbmany} There is a subset $A\subseteq W_2\cup W_3\cup W_4$ with $|A|=3r(\ell-2)-r$ such that for any subset $U\< W_1$ with $|U|=r$, there is a collection of $3r$ vertex disjoint $(x_i,y_i)$-paths of length $\ell-1$, $i\in[3r]$, in $A\cup U$. In fact, therefore, such paths cover the set $A\cup U$.
\end{claim}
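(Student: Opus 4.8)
The plan is to build $A$ as a Montgomery-type absorbing structure steered by the bipartite template of Lemma~\ref{flexiblematching}. Apply that lemma with parameter $3r$ (which is divisible by $3$) to get a bipartite graph $H$ on classes $\mathcal X,\mathcal Y\cup\mathcal Z$ with $|\mathcal X|=3r$, $|\mathcal Y|=|\mathcal Z|=2r$ and maximum degree $40$; identify $\mathcal Z$ with $W_1$ and let $\mathcal Y$ be realised by the set $Q:=W_2$, so that for each $U\subseteq W_1$ with $|U|=r$ the lemma yields a perfect matching $\mu_U\colon\mathcal X\to Q\cup U$. The set $A$ will be $Q$ together with $3r$ pairwise vertex-disjoint \emph{flexible paths} $\widehat P_1,\dots,\widehat P_{3r}$, where $\widehat P_i$ joins $x_i$ to $y_i$ inside $W_3\cup W_4$, has exactly $\ell-1$ vertices, and carries one \emph{docking edge} for every edge $iw$ of $H$ (at most $40$ of them): the docking edge $a_{iw}b_{iw}$ will be chosen so that the vertex realising $w$ is a common neighbour of $a_{iw}$ and $b_{iw}$, i.e.\ $\{w,a_{iw},b_{iw}\}$ spans a triangle. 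Equivalently, around each of the $4r$ prescribed vertices $w\in W_1\cup W_2$ one builds a fan, and its $\le40$ absorber edges serve as the docking edges of the (at most $40$) flexible paths that list $w$ as an option. The design goal is that, given $U$, re-routing $\widehat P_i$ through the triangle at $a_{i\mu_U(i)}b_{i\mu_U(i)}$ — replacing that edge by the length-$2$ path $a_{i\mu_U(i)}\,\mu_U(i)\,b_{i\mu_U(i)}$ — turns $\widehat P_i$ into an $(x_i,y_i)$-path of length exactly $\ell-1$, and that the $3r$ re-routed paths cover exactly $A\cup U$.

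To carry this out I would first build the fans. Since $G$ is $m$-joined and $G$ $d_j$-expands into each $W_j$ with $d_j=\tfrac{|W_j|}{5|W|}d\gg m$ (here $d=C\ell\sqrt n$), every vertex has at least $d_3\gg2m$ neighbours in $W_3\cup W_4$, so any set of $\ge2m$ vertices there spans an edge; hence near each prescribed $w$ one greedily extracts a fan of $\le40$ vertex-disjoint triangles inside $W_3\cup W_4$, all fans mutually vertex-disjoint. Next, for each $i$, stitch the $\le40$ docking edges assigned to $\widehat P_i$ (one absorber edge from the fan of each template-neighbour of $i$) into a path from $x_i$ to $y_i$ of length $\ell-2$, using Lemma~\ref{connect} to build the $\le41$ short connecting segments one pair at a time, with the connection parameter taken to be a suitable small power of $n$: small enough that segments of bounded length (so the total length stays $\le\ell-2$, which is where $\ell\ge200$ is used) suffice, yet, since $d_3$ dominates it, still admissible for Lemma~\ref{connect}; disjointness is maintained by shrinking the reservoir $W_3\cup W_4$ after each use and padding batches up to $2m$ pairs as that lemma requires. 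Setting $A:=Q\cup\bigcup_i\big(V(\widehat P_i)\setminus\{x_i,y_i\}\big)$, each $\widehat P_i$ contributes $(\ell-1)-2=\ell-3$ vertices and $|Q|=2r$, so $|A|=3r(\ell-3)+2r=3r(\ell-2)-r$; one sees that the constraint $|\mathcal Y|=|\mathcal Z|$ of Lemma~\ref{flexiblematching} is exactly what makes this count come out right.

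To finish, given $U\subseteq W_1$ with $|U|=r$, take the perfect matching $\mu_U\colon\mathcal X\to Q\cup U$ from Lemma~\ref{flexiblematching}; as $\mu_U$ is a matching of $H$ we have $\mu_U(i)\in N_H(i)$, so the docking edge $a_{i\mu_U(i)}b_{i\mu_U(i)}$ exists and we subdivide it by $\mu_U(i)$ using the triangle at that absorber edge, leaving the other $\le39$ docking edges of $\widehat P_i$ untouched. Each resulting path is an $(x_i,y_i)$-path of length $\ell-1$, they are pairwise vertex-disjoint, and the set of their internal vertices is $\bigcup_i\big(V(\widehat P_i)\setminus\{x_i,y_i\}\big)\cup\{\mu_U(i):i\in[3r]\}=A\cup U$, since $\mu_U$ is a bijection onto $Q\cup U$. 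Hence the $3r$ paths lie in (and cover) $A\cup U$; as $|A\cup U|=3r(\ell-2)$ equals the total number of internal vertices of $3r$ length-$(\ell-1)$ paths there is no slack, which also gives the last sentence of the claim.

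The step I expect to be the main obstacle is the disjoint placement of the fans: there are $\Theta(n)$ docking edges in total, each confined to an individual neighbourhood $N_G(w)$ of size only about $d_3=\Theta_\ell(\sqrt n)$, and a crude greedy count does not suffice, since once $n$ is large relative to $\ell$ the number of vertices already consumed can exceed $\min_w|N_G(w)\cap(W_3\cup W_4)|$. Making this work is precisely where the hypothesis $d\ge C\ell\sqrt n$ (rather than a weaker combinatorial expansion, for which common neighbourhoods are uncontrolled) is indispensable, and it requires building the fans before the connecting segments, treating the $W_1$-fans first, and a more careful argument — e.g.\ a sharper accounting of how the already-used vertices meet a given neighbourhood, or a randomised choice of fans so that no neighbourhood is over-used — that a disjoint family of fans can be extracted. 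A secondary, more routine difficulty is the length bookkeeping that pins every $\widehat P_i$ to length exactly $\ell-1$ no matter which of its options is activated and that makes $|\mathcal X|,|\mathcal Y|,|\mathcal Z|$ fit the target $|A|=3r(\ell-2)-r$.
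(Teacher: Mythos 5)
Your overall plan is exactly the paper's: apply the bipartite template of Lemma~\ref{flexiblematching} with parameter $3r$ to assign each $i\in[3r]$ at most $40$ candidate absorber vertices in $W_1\cup W_2$, build a $40$-fan at each such $w$ whose absorber edges serve as docking edges, stitch an $(x_i,y_i)$-path $\widehat{P}_i$ of length $\ell-2$ through its assigned docking edges using Lemma~\ref{connect} (with $d_1=m$ so that segments of length $\ge 3$ suffice and batches of $2m$ pairs are used, with a separate block kept in reserve for the final $\le 2m$ segments), and absorb a given $U$ by subdividing one docking edge per path via the triangle. Your count $|A|=2r+3r(\ell-3)=3r(\ell-2)-r$ and the coverage argument are likewise the paper's.

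However, the obstacle you flag is the real crux, and you do not close it; moreover, the remedies you float are not the ones that work. As you observe, once $n\gg_\ell 1$ the $320r\approx 0.032\,n/\ell$ vertices consumed by all fans can dwarf any individual neighbourhood of size about $d_3=\Theta(C\ell\sqrt n)$, so no amount of bookkeeping, randomised placement, or reordering by $W_1$ versus $W_2$ rescues a greedy in which the next root is prescribed. The paper's fix is structurally different: it lets the greedy choose \emph{which root to serve next} rather than fixing an order. Concretely, for any set $L$ of exactly $m$ candidate roots and any $S\subseteq W_3$ with $|S|=n/100$, $m$-joinedness gives $|N(L)\cap S|>|S|-m$; averaging over $L$ yields some $v\in L$ with $|N(v)\cap S|>(|S|-m)/m\ge 80m$, and a $40$-fan for this $v$ can then be picked greedily in those neighbours (each step leaves $\ge 2m$ vertices, hence an edge). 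Iterating this peels off fans for all but fewer than $m$ roots. To serve those last $<m$ roots one needs a prescribed-root bound, and the paper secures it in advance by randomly reserving a half $T\subseteq W_3$ of size $|W_3|/2$ with $d(v,T)\ge\tfrac14 d(v,W_3)\ge 160m$ for every $v$ (Chernoff plus a union bound), untouched by the first stage; since only $<80m$ further fan-vertices are needed, a plain greedy in $T$ now works. So the essential ingredient is free choice of root plus a reserved half for the tail, not a sharper accounting of consumed vertices; the hypothesis $d\ge C\ell\sqrt n$ enters mainly through $m\lesssim\sqrt n/(C\ell)$ to guarantee $(|S|-m)/m\ge 80m$, and independently in the applications of Lemma~\ref{connect}. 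As written, your proof has a genuine gap at this step.
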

\begin{proof}[Proof of claim]\renewcommand*{\qedsymbol}{$\blacksquare$}
We shall first create pairwise disjoint $40$-fans $F_v$ with root $v$ for every $v\in W_1\cup W_2$ and the remaining vertices inside $W_3$. To achieve this, we choose a subset $T\<W_3$ such that $|T|=\frac{|W_3|}{2}$ and $d(v,T)\ge \frac{1}{4}d(v,W_3)$ for every $v\in V(G)$. Such a set $T$ can be found by choosing $\frac{|W_3|}{2}$ vertices in $W_3$ uniformly at random and by applying Chernoff's inequality and a union bound. Let $T'=W_3\setminus T$.
We claim that for every set $L$ of $m$ vertices in $W_1\cup W_2$ and every set $S\<W_3$ of $\frac{n}{100}$ vertices, there exists a $40$-fan with a root in $L$ and the remaining vertices in $S$. In fact, as $G$ is $m$-joined, by averaging, there is a vertex, say $v\in L$, that has a set $S_v$ of neighbors inside $S$ with \[ |S_v|>\frac{|S|-m}{m}\ge\frac{n}{200m}\overset{\eqref{eq70}}{\ge} 80m.\] Thus as $G$ is $m$-joined, we can greedily pick a matching of $40$ edges in $S_v$, which together with $v$ form a $40$-fan $F_v$ as desired. Based on this claim we can greedily choose pairwise disjoint $40$-fans $F_v$ for all but at most $m$ vertices $v\in W_1\cup W_2$, where all the remaining vertices of such fans $F_v$ come from $T'$. This is possible as $|T'|=\frac{|W_3|}{2}>\frac{n}{6}\ge 80|W_1\cup W_2|+\frac{n}{100}$. Now it remains to find fans for a set $L'$ of at most $m$ vertices from $W_1\cup W_2$, and here we shall use $T$ (reserved to cover the leftover vertices). This can be trivially done as for every $v\in V(G)$, $d(v,T)\ge \frac{1}{4}d(v,W_3)\ge\frac{|W_3|}{20|W|}d>\frac{d}{60}\ge 160m\ge 80|L'|+80 m$. Denote by $A_v$ the set of absorbers for $v$ in $F_v, v\in W_1\cup W_2$ and let $M=\cup_{v\in W_1\cup W_2}A_v$. Then $M$ is indeed a matching of size $160r$ in $W_3$. Let $W_3'=W_3\setminus V(M)$ and thus $|W_3'|=|W_3|-320r$.

To describe how to route our paths through these triangles in the fans, we refer to an auxiliary bipartite graph $H$ obtained by Lemma \ref{flexiblematching} applied with $n=3r$. The bipartite graph $H$ has maximum degree $40$ and vertex classes $X=[3r]$ and $Y\cup Z$ such that $|Y|=|Z|=2r$ and for any subset $Z'\<Z$ with $|Z'|=r$ there is a perfect matching between $X$ and $Z'\cup Y$ in $H$. Fix an arbitrary bijection $\tau:Y\cup Z\to W_1\cup W_2$ with $\tau(Z)=W_1$. Now we first pick pairwise disjoint matchings $M_i\<M,i\in[3r]$ such that
\stepcounter{propcounter}
\begin{enumerate}[label = ({\bfseries \Alph{propcounter}\arabic{enumi}})]
       \item\label{p20} for every $i\in[3r]$ and every $v\in \tau(N_H(i))$, $M_i$ consists of exactly one edge in $A_v$.
\end{enumerate}
Recall that $X_1=\{x_1,\ldots,x_{3r}\}, Y_1=\{y_1,\ldots,y_{3r}\}$. Next, we shall build pairwise disjoint $(x_i,y_i)$-paths $P_i$ of length $\ell-2$ for every $i\in[3r]$ such that $M_i\<E(P_i)$ and the other internal vertices come from $W_3'\cup W_4$.
As $|M_i|\le 40$, we may take $|M_i|=40$ for instance and set $M_i=\{u_jv_j: j\in[40]\}$ to ease the notation. Observe that $\ell\ge 200> 3(|M_i|+1)+|M_i|$, every such $P_i$ ought to be constructed by connecting the corresponding $41$ pairs of vertices \[\{x_i,u_1\},\{v_1,u_2\},\ldots\{v_j,u_{j+1}\},\ldots,\{v_{40},y_i\}\] using pairwise disjoint paths $P_{i,j}$ of length $k_{i,j}\ge 3,j\in[41]$ satisfying
\[\sum_{j\in[41]}k_{i,j}=\ell-2-|M_i|.\]
To do this, in total we have a set of $\sum_{i\in[3r]}(|M_i|+1)\le 123r$ such pairs to connect as above, say $\mathcal{P}$, and the number of vertices used in all these connections is at most $3r\ell$. Note that by the choice of $\frac{1}{C}\ll\frac{1}{\ell}$ and $r=\frac{n}{10^4\ell}$, 
we have
\[
|W_3'|=|W_3|-320r=\frac{|W|}{2}-322r\overset{\eqref{eq71}}{>}\frac{n}{6}> 3r\ell+20m^2.
\]
By repeatedly applying Lemma~\ref{connect} with $d_1=m$ and $U$ playing the role of the set of vertices in $W_3'$ uncovered by previous connections, one can greedily connect all but a subfamily $\mathcal{P}'\<\mathcal{P}$ of at most $2m$ pairs via vertex-disjoint paths $P_{i,j}$ with all the internal vertices in $W_3'$. Now we shall connect the remaining pairs in $\mathcal{P}'$ using $W_4$. Note that $G$ $d_4$-expands into $W_4$ and thus for every $v\in V(G)$, \[d(v,W_4)\ge d_4=\frac{|W_4|}{5|W|}d> \frac{d}{20}\overset{\eqref{eq70}}{>}10 m\ell.\]
Thus one can greedily complete the connection for $\mathcal{P}'$ as the total number of vertices used in connections is at most $2m\ell$.

Let $A=\cup_{i\in[3r]}(P_i\setminus\{x_i,y_i\})\cup W_2$ and thus $|A|=3r(\ell-2)-r$. We claim this is such a set as required by the claim. Indeed, let $U\< W_1$ be any set of size $r$. From the property of the graph $H$, we can find a perfect matching between $\tau^{-1}(U)\cup Y$ and $[3r]$. For each $i\in [3r]$, take the vertex $v\in U\cup W_2$ such that $\tau^{-1}(v)\in \tau^{-1}(U)\cup Y$ is matched to $i$ and also take an $(x_i,y_i)$-path of length $\ell-1$ on $V(P_i)\cup\{v\}$, i.e., in $P_i$ replacing the unique edge $uw\in P_i\cap A_v$ by the path $uvw$ (see \ref{p20}). These paths altogether cover $U\cup A$, as required.
\end{proof}

\noindent \textbf{Phase $2$. Connect almost all pairs $\{x_i,y_i\}$ for $x_i\in X_2,y_i\in Y_2$}.
 \medskip\\
Let $W'=W\setminus (A\cup W_1)$ and $s=\frac{1+c}{\ell-2}r$ for $c=\frac{1}{8}$. Then $|W'|=\frac{\ell-2}{\ell}n-3r(\ell-2)-r=(\ell-2)|X_2|-r$ and $s>2m$. By repeatedly applying Lemma~\ref{connect} with $U=W'$, $d_1=m$ and $k_i=\ell$, one can greedily connect pairs $\{x_i,y_i\}$, where $x_i\in X_2,y_i\in Y_2$, via vertex-disjoint paths $P_i$ of length $\ell-1$ with all the internal vertices in $W'$ until there are exactly $s$ pairs remaining. Since in total the number of vertices unused in the connections is at least \[|W'|-(\ell-2)(|X_2|-s)=(\ell-2)s-r=c r\ge 20m^2,\] if after this there are $t$ vertex pairs remaining, where $t>s>2m$, then take among them $2m$ pairs $\{x_i,y_i\}$. Lemma \ref{connect} would give one more path for some $\{x_i,y_i\}$, contradicting $t>s$. The process terminates, therefore, with only $s$ pairs remaining, denoted as $\mathcal{S}$.

\medskip
\noindent \textbf{Phase $3$. Connect the pairs in $\mathcal{S}$ using $W_1$}
\medskip\\
Let $S$ be the set of indices $i$ such that $\{x_i,y_i\}\in \mathcal{S}$ and $L$ be the set of remaining vertices in $W'$ not covered by any path in previous steps. Then $|L|=|W'|-(\ell-2)(|X_2|-s)=(\ell-2)s-r=c r$. Write $X_S=\{x_i: i\in S\},Y_S=\{y_i: i\in S\}$ and $L=\{u_1,u_2,\ldots,u_{c r}\}$.

Now we shall connect all pairs $\{x_i,y_i\}\in \mathcal{S}$ via pairwise disjoint paths of length exactly $\ell-1$, each having internal vertices in $W_1$. To achieve this, we choose a partition $T_1\cup T_2=W_1$ such that $|T_1|=3|T_2|$ and $G$ $\frac{d_1}{20}$-expands into $T_i$ for $i\in[2]$. Since $G$ $d_1$-expands into $W_1$ and $\frac{|T_i|}{5|W_1|}d_1\ge\frac{d_1}{20}>2\log 2r$, such a partition can be obtained by applying Lemma~\ref{randompart}.

As $G$ $\frac{d_1}{20}$-expands into $T_1$, we first find a star-matching $M'$ into $T_1$ from $X_S\cup Y_S\cup L$, such that $x_i$ (or $y_i$) is matched to one vertex, say $x_i^1$ (resp. $y_i^1$), and every $u_j\in L$ is matched to two vertices, say $v_j^1$ and $v_j^2$, respectively. Indeed, by Hall's condition, it suffices to guarantee that $N(X,T_1)\ge 2|X|$ for every subset $X\in X_S\cup Y_S\cup L$: the case $|X|<m$ easily follows as $G$ $\frac{d_1}{10}$-expands into $T_1$;
as $G$ is $m$-joined, if $|X|\ge m$, then $N(X,T_1)>|T_1|-m\ge r\ge 2|X|$ because $|X|\le |X_S\cup Y_S\cup L|=2s+c r\le \frac{r}{2}$. Write $T_1'=T_1-V(M')$. We then arbitrarily partition all the pairs $\{v_j^1,v_j^2\}, j\in[c r]$ into $s$ groups of nearly equal size, denoted by $U_1,U_2,\ldots,U_s$.

Next we shall find a maximal collection of pairwise disjoint $(x_i,y_i)$-paths $P_i$ ($i\in S$) of length $\ell-1$ with all internal vertices in $T_1$ such that each $P_i$ passes through the paths $v_j^1u_jv_j^2$ for all $\{v_j^1,v_j^2\}\in U_i$. Note that \[\frac{c r}{s}-1\le|U_i|\le \frac{c r}{s}+1=\frac{c}{1+c}(\ell-2)+1\le c(\ell-2)= \frac{\ell-2}{8}\] and thus $\ell> 3(|U_i|+1)+2|U_i|$. Setting $U_i=\{\{v_j^1,v_j^2\}: j\in[t]\}$ for some $t\in\N$ to ease the notation, we will build such a path $P_i$ by connecting the corresponding $|U_i|+1$ pairs
\[\{x_i, v_1^1\},\{v_1^2,v_2^1\},\{v_2^2,v_3^1\},\ldots,\{v_j^2,v_{j+1}^1\},\{v_{j+1}^2,v_{j+2}^1\},\ldots,\{v_t^2,y_i\}\]
of vertices via pairwise disjoint paths of nearly equal length $\ell_1,\ell_2,\ldots,\ell_{t+1}$ with $\ell_j\ge 3$ and $\sum_{j\in[t+1]}\ell_j=\ell-2t-1$. To do this, in total we have a set of $\sum_{i\in S}(|U_i|+1)$ pairs to connect as above, say $\mathcal{P}_S$, and the number of vertices in $T_1$ used in all these connections is at most $s(\ell-2)-|L|=r$. Note that $|T_1|=\frac{3r}{2}\ge r+20m^2$ and every subpath in the connections has length \[3\le\ell_j\le\frac{\ell-1-2t}{t+1}+1\le\frac{\ell}{ c r/s}\le\frac{2\ell}{c(\ell-2)}\le \frac{4}{c}=:\ell_{\max}.\] Connect as many pairs in $\mathcal{P}_S$ as possible via pairwise vertex-disjoint paths of prescribed lengths $\ell_j$. Then by repeatedly applying Lemma~\ref{connect} with $d_1=m$ and $U$ playing the role of the remaining set of at least $20m^2$ vertices in $T_1$ in the process, there is a subfamily $\mathcal{P}'\<\mathcal{P}_S$ of at most $2m$ pairs left. Now we shall connect the remaining pairs in $\mathcal{P}'$ using $T_2$. Note that $G$ $\frac{d_1}{20}$-expands into $T_2$ and thus as $\frac{1}{C}\ll\frac{1}{\ell}$, \[d(v,T_2)\ge \frac{d_1}{20}\ge\frac{|W_1|}{100|W|}d\ge \frac{d}{10^5\ell}\ge 10m\ell_{\max}.\]
Then one can greedily complete the connections for pairs in $\mathcal{P}'$ because the total number of vertices used in $T_2$ is at most $2m\ell_{\max}$.

In summary, we obtain a set $A$ (see Claim~\ref{absorbmany}) and a collection of pairwise disjoint paths $P_1,P_2,\ldots, P_{\frac{n}{\ell}}$, where the first $3r$ paths have length $\ell-2$ whilst the remaining paths have length $\ell-1$. Let $W_1'$ be the vertices of $W_1$ not used in any of the previous paths $P_i$. Then $|W_1'|=|W_1|+|L|-s(\ell-2)=2r+cr-(1+c)r=r$. Using the properties of the set $A$, we can find a collection of $3r$ pairwise disjoint paths of length $\ell-1$, each connecting $x_i$ and $y_i$ for every $i\in[3r]$ whilst covering the set $A\cup W_1'$. This completes all the desired paths which cover the entire vertex set.
\oof

\bibliographystyle{abbrv}
\bibliography{ref}

\begin{appendices}
\section{Proof of Lemma~\ref{connect}}\label{APP1}
We will also find the following result in~\cite{han}, which is preciously formulated in \cite{BCPS10}.
\begin{theorem}\cite[Corollary 6]{han}\label{almost2}
Let $\Delta,m,M\in \N$ and $H$ be a non-empty graph such that for every $X\<V(H)$, if $0<|X|\leq m$, then $N_H(X)|\geq \Delta|X|+1$ and, if $|X|=m$, then $|N_H(X)|\geq \Delta m+M$, then $H$ contains every tree in $\mathcal{T}(M,\Delta)$. Moreover, for any $T\in \mathcal{T}(M,\Delta)$ and any vertices $x\in V(T),y\in V (G)$, there exists
an embedding of $T$ into $G$ which maps $x$ to $y$.
\end{theorem}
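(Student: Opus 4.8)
The statement is Haxell's theorem on embedding bounded-degree trees into expanding host graphs --- the hypothesis on $H$ is exactly the ``Haxell condition'' --- together with a rooted refinement, so the plan is to prove it by the Friedman--Pippenger/Haxell incremental embedding. Fix $T\in\mathcal{T}(M,\Delta)$, a root $x\in V(T)$, and a target $y\in V(H)$; enumerate $V(T)=\{v_1,\dots,v_M\}$ with $v_1=x$ so that each $v_i$ with $i\ge2$ has exactly one neighbour among $\{v_1,\dots,v_{i-1}\}$ (a breadth-first order from $x$), and let $S_i=T[\{v_1,\dots,v_i\}]$. I would construct nested embeddings $\phi_1\subseteq\cdots\subseteq\phi_M$ of $S_1\subseteq\cdots\subseteq S_M=T$ with $\phi_1(x)=y$, maintaining that each $\phi_i$ is \emph{extendable} in the sense of Definition~\ref{extendable} adapted to the present hypothesis; schematically, for every $X\subseteq V(H)$ with $1\le|X|\le m$ one keeps a bound of the form
\[
\bigl|N_H(X)\setminus\phi_i(V(S_i))\bigr|\ \ge\ \Delta|X|+1-\sum_{v\in X\cap\phi_i(V(S_i))}d_{S_i}\!\bigl(\phi_i^{-1}(v)\bigr),
\]
possibly with the image of the current attachment vertex exempted from the deleted set (the usual device, cf.\ the ``$G-(V(S)\setminus\{r\})$'' in Lemma~\ref{extendtree}, that makes the base case $i=1$ follow verbatim from the hypothesis). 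Producing $\phi_M$ yields an embedding of $T$ in $H$ with $x\mapsto y$, giving the bare statement and the ``moreover'' clause simultaneously.

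The heart of the proof is the one-step extension lemma: if $\phi=\phi_{i-1}$ is extendable with $i\le M$ and $v_i$ is attached in $S_i$ to some $u\in V(S_{i-1})$, then some $z\in N_H(\phi(u))\setminus\phi(V(S_{i-1}))$ makes $\phi\cup\{v_i\mapsto z\}$ extendable. Setting $p=\phi(u)$, applying the invariant to $X=\{p\}$ together with $d_{S_{i-1}}(u)\le\Delta-1$ (since $u$ still lacks the child $v_i$) produces a nonempty pool of candidates $z$. A short calculation pins down which sets can obstruct a given $z$: they must be \emph{tight} for $\phi$ (equality in the invariant), with $p\notin X$, $z\notin X$, and $z\in N_H(X)$. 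Assuming each candidate $z$ had such a witness $X_z$, I would merge the $X_z$ by an uncrossing argument built on the submodularity of $X\mapsto|N_H(X)\setminus\phi(V(S_{i-1}))|$ (valid since $N_H(X\cup Y)=N_H(X)\cup N_H(Y)$ while $N_H(X\cap Y)\subseteq N_H(X)\cap N_H(Y)$) set against the linearity of the right-hand side, ending with a single tight set $X^\ast$, $|X^\ast|\le m$, whose closed neighbourhood still swallows all of $N_H(p)\setminus\phi(V(S_{i-1}))$. Plugging $X^\ast$ (or $X^\ast\cup\{p\}$) into the invariant then forces $|N_H(X^\ast)|$ below the permitted value: when $|X^\ast|<m$ this contradicts $|N_H(X^\ast)|\ge\Delta|X^\ast|+1$, and when the merge reaches $|X^\ast|=m$ one instead uses $|N_H(X^\ast)|\ge\Delta m+M$ and the fact that only $i-1\le M-1$ host vertices have been used to absorb the extra $+M$ --- this is precisely why the hypothesis splits into the two regimes.

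I expect this extension lemma to be the crux and the only real difficulty. The delicate points are fixing the extendability invariant exactly right (where the single ``$+1$'' of slack sits, how the budgets $\Delta-d_{S_i}(\cdot)$ at embedded vertices are charged, and how the attachment vertex is kept available without destroying injectivity), ensuring the family of tight obstructions merges under union to a usable set of size at most $m$ despite the small ``leakage'' caused by empty intersections, and treating the boundary case $|X^\ast|=m$ that is the sole entry point of the parameter $M$. Everything else --- the breadth-first enumeration of $T$, verifying the base case, and iterating the extension $M-1$ times --- is routine, and the rooted ``moreover'' needs nothing extra since $\phi_1(x)=y$ is fixed from the outset.
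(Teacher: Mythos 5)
The paper does not prove this statement itself: it is quoted verbatim as Corollary~6 of the reference \cite{han} (which in turn traces back to \cite{BCPS10} and ultimately to Haxell's refinement of Friedman--Pippenger), so there is no internal proof to compare against. What can be said is that you have correctly identified the result, the tradition it belongs to, and the standard proof architecture: a BFS enumeration of $T$, incremental embedding while maintaining a Haxell-type extendability invariant on neighbourhoods, a one-step extension lemma proved by contradiction via tight obstructing sets, and uncrossing/merging of tight sets using submodularity of $X\mapsto |N_H(X)\setminus\mathrm{Im}\,\phi|$, with the parameter $M$ entering only through the $|X|=m$ boundary case. This is genuinely the proof used in the cited sources, and you have flagged the correct pressure points.

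One concrete issue: the extendability invariant as you wrote it,
\[
\bigl|N_H(X)\setminus\phi_i(V(S_i))\bigr|\ \ge\ \Delta|X|+1-\sum_{v\in X\cap\phi_i(V(S_i))}d_{S_i}\!\bigl(\phi_i^{-1}(v)\bigr),
\]
cannot hold even at the base case. For $S_1=\{x\}$, $\phi_1(x)=y$, and any $X$ with $y\notin X$, $|X|\le m$, your invariant demands $|N_H(X)\setminus\{y\}|\ge\Delta|X|+1$, i.e.\ $|N_H(X)|\ge\Delta|X|+2$, which strictly exceeds the hypothesis $|N_H(X)|\ge\Delta|X|+1$. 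Compare with Definition~\ref{extendable} in this paper, where the right-hand side is $(d-1)|X|-\sum_{x\in X\cap V(S)}(d(x,S)-1)$: taking $d=\Delta+1$ this equals $\Delta|X|-\sum_{x\in X\cap V(S)}d(x,S)+|X\cap V(S)|$, so for $X$ disjoint from the image the requirement is only $\Delta|X|$, which the hypothesis covers with exactly one vertex of slack to donate to $\mathrm{Im}\,\phi_1$. The fix is to charge each embedded $X$-vertex a budget of $d_{S}(\cdot)-1$ rather than $d_{S}(\cdot)$ and drop the free ``$+1$'' (equivalently, adopt the $(d,m)$-extendability of Definition~\ref{extendable} with $d=\Delta+1$). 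Since you explicitly flagged ``where the single $+1$ of slack sits'' as a delicate point, this is a correction within the scope of your own caveats rather than a conceptual gap; once the invariant is set to the Glebov--Johannsen--Krivelevich normalization the rest of your outline goes through along the lines you describe.
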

\pr[Proof of Lemma~\ref{connect}]
Recall that $G$ is $m$-joined and contains disjoint vertex sets $X=\{x_1,\ldots,x_{2m}\}$, $Y=\{y_1,\ldots, y_{2m}\}$ and $U$ with $|U|\ge20d_1m$.
Divide $U$ into two sets, $U_1$ and $U_2$, each of size $\frac{|U|}{2}$. Pick a maximal subset $B\<U_1$ such that $|B|\leq m$ and $|N(B,U_1)|<2d_1|B|$.  Let $V_1=U_1\setminus B$.\medskip

\noindent\textbf{Claim. }
  Every $A\<V_1$ with $0<|A|\leq m$ has $|N(A,V_1)|\ge2d_1|A|$.
\pr[Proof of claim]\renewcommand*{\qedsymbol}{$\blacksquare$}
Suppose for contradiction that there is a set $A\<V_1$ with $0<|A|\leq m$ and $|N(A,V_1)|<2d_1|A|$. Then $|N(A\cup B,U_1)|<2d_1(|A|+|B|)$, so by the maximality of $B$ we must have that $m<|A\cup B|\le 2m$. As $G$ is $m$-joined, we have $|N(A\cup B, U_1)|\geq |U_1|-|A\cup B|-m$. Therefore,
\[
|N(A,V_1)|\geq |N(A\cup B,U_1)|-|N(B,U_1)|\geq |U_1|-3m-2d_1m\geq 2d_1m.
\]
\oof
We have that $|V_1|\ge |U_1|-m=\frac{|U|}{2}-m$ and every set $A\<V_1$ with $|A|\leq m$ satisfies $|N(A, V_1)|\geq2d_1|A|$.
Similarly, find a set $V_2\<U_2$ with the same expansion property, with $|V_2|\ge\frac{|U|}{2}-m$.

Now as $G$ is $m$-joined, if $X'\<X$ is a set of size $m$, then $|N(X',V_1)|>|V_1\setminus X'|-m$ and hence some vertex $x\in X'$ must have at least $\frac{|V_1|-2m}{m}\geq 2d_1$ neighbours in the graph $V_1$. Therefore, at least $m+1$ vertices in $X$ have at least $2d_1$ neighbours in $V_1$.
Similarly at least $m+1$ vertices in $Y$ have at least $2d_1$ neighbours in $V_2$. Therefore, there is some index $j\in[2m]$ for which $x_j$ and $y_j$ have at least $2d_1$ neighbours in $V_1$ and $V_2$, respectively.

The graph $H=G[V_1\cup\{x_j\}]$ then has the property that, given any set $A\<V(H)$, if~$0<|A|\leq m$, then $|N_H(A)|\geq d_1|A|+1$, and, if $|A|=m$, then $|N_H(A)|\ge |H|-2m$. Let $T$ be the $d$-ary tree of height $\ell=\lceil \log m/\log d_1\rceil$. As $k_j\geq 2\log m/\log d_1+1$, we have that $\lfloor k_j/2\rfloor-\ell-1\geq 0$. Attach a path of length $\lfloor k_j/2\rfloor-\ell-1$ to the root of $T$ to get the tree $T'$ and let the end vertex of the path which is not the root of $T$ be $t_1$. The tree $T'$ has at most $k_j/2+2d_1^{\ell}\le \frac{|U|}{4}+2m(d_1+1)< |H|-2m-2md_1$ vertices. Therefore, by Theorem \ref{almost2}, $H$ contains a copy of $T'$ so that the vertex $t_1$ is embedded onto the vertex $x_j$. Say this copy of $T'$ is $S_1$.
Similarly, $G[V_2\cup\{y_j\}]$ contains a $d_1$-ary tree with a path of length $\lceil k_j/2 \rceil -\ell$ connecting the root of the regular tree to $y_j$. Call this tree $S_2$.

The set of vertices in the last level of the $d_1$-ary trees each contain at least $m$ vertices. Suppose these sets are $W_1$ and $W_2$ for the trees $S_1$ and $S_2$ respectively. Then there exists an edge $v_1v_2$ with $v_1\in W_1,v_2\in W_2$. Taking the path of length $\lceil k_j/2\rceil-1$ through the tree $S_1$ from $x_j$ to $v_1$, the path $v_1v_2$ and the path of length $\lfloor k_j/2\rfloor$ through the tree $S_2$ from $v_2$ to $y_j$, we obtain a desired $(x_j,y_j)$-path of length $k_j$, with internal vertices in $U$.
\oof
\section{Proof of Proposition~\ref{f1}}\label{APP2}

\pr
Given constants $p,\be>0$ and an $n$-vertex $(p,\beta)$-bijumbled graph $G$ with minimum degree $\de\ge4\sqrt{p\be n}$, we prove that $G$ is an $(n, d_1)$-expander with $d_1=\frac{pn}{4\beta}\ge 100$. Indeed, it is easy to verify that $G$ is $\frac{n}{2d_1}$-joined since for every two disjoint vertex sets $X,Y$ size at least $\frac{n}{2d_1}=\frac{2\be}{p}$ we have \[e(X,Y)\ge p|X||Y|-\be\sqrt{|X||Y|}\ge \frac{4\be^2}{p}-\frac{2\be^2}{p}>0.\] By definition~\ref{expander}, it remains to verify that for every $X\<V(G)$ with $|X|<\frac{n}{2d_1}$, $N(X)\ge d_1|X|$, and we split it into two cases, where we write $Y=N(X)$.

 If $|X|\ge 100\frac{\be^2}{p^2n}$, then as $|V\setminus X|\ge \frac{3n}{4}$ and \[p|X||Y|+\be\sqrt{|X||Y|}\ge e(X,N(X))=e(X,V\setminus X)\ge p|X||V\setminus X|-\be\sqrt{|X||V\setminus X|},\] we obtain that $p|Y|+\be\sqrt{\frac{|Y|}{|X|}}\ge \frac{3}{4}pn-\be\sqrt{\frac{n}{|X|}}$. This implies that \[|Y|\ge \frac{3}{4}n-\frac{2\be}{p}\sqrt{\frac{n}{|X|}}\ge \frac{3}{4}n-\frac{2\be}{p}\sqrt{\frac{p^2n^2}{100\be^2}}>\frac{n}{2}\ge d_1|X|.\]

 If $|X|< 100\frac{\be^2}{p^2n}$ and assume for contradiction that $|Y|<d_1|X|$, then $\de |X|\le e(X,X\cup Y)\le p|X||X\cup Y|+\be\sqrt{|X||X\cup Y|}\le p|X|^2(d_1+1)+\be|X|\sqrt{d_1+1}$. Recall that $d_1=\frac{pn}{4\beta}\ge 100$ and $|X|<100\frac{\be^2}{p^2n}$. It follows that \[\de\le p(d_1+1)|X|+\be\sqrt{d_1+1}< 50\be+\sqrt{\frac{p\be n}{2}}\le50\sqrt{\frac{p\be n}{400}}+\sqrt{\frac{p\be n}{2}}<4\sqrt{p\be n}, \] a contradiction.

The case when $G$ is an $(n, d, \la)$-graph is much easier. First reset $d_1=\frac{d}{2\la}>4$ and by Expander Mixing Lemma, $e(X,Y)\ge \frac{d}{n}|X||Y|-\la\sqrt{|X||Y|(1-\frac{|X|}{n})(1-\frac{|Y|}{n})}> \frac{d}{n}(\frac{\la n}{d})^2-\frac{\la^2 n}{d}=0$ for every two disjoint vertex sets $X,Y$ size at least $\frac{n}{2d_1}=\frac{\la n}{d}$. Then we will show that for every $X\<V(G)$ with $|X|<\frac{\la n}{d}$, $N(X)\ge d_1|X|$. Suppose this is invalid for some $X\<V(G)$, i.e, $N(X)< d_1|X|$. Similarly set $Y=N(X)$ and thus \[d |X|\le e(X,X\cup Y)< \frac{d}{n}|X||X\cup Y|+\la\sqrt{|X||X\cup Y|}\le \frac{d}{n}(d_1+1)|X|^2+\la|X|\sqrt{d_1+1}.\] Then it implies that $d<\frac{d}{n}(d_1+1)|X|+\la \sqrt{d_1+1}\le \la(\frac{d}{2\la}+1)+\la \sqrt{\frac{d}{2\la}+1}$, i.e, $\frac{d}{2\la}<\sqrt{\frac{d}{2\la}+1}+1$, a contradiction to the fact that $\la<\frac{d}{8}$.
\oof
\end{appendices}
\end{document}